\def\namedlabel#1#2{\begingroup
	#2%
	\def\@currentlabel{#2}%
	\phantomsection\label{#1}\endgroup
}
\newcommand{\Var}{\mathrm{Var}}
\newcommand{\E}{\mathbb{E}}
\newcommand{\Inf}{\operatorname{Inf}}
\newcommand{\real}{\mathbb{R}}
\newcommand{\com}{\mathbb{C}}
\newcommand{\tr}{\operatorname{tr}}
\newcommand{\cH}{\mathcal{H}}
\newcommand{\un}{\mathbf{1}}
\renewcommand{\L}{\mathcal L}
\renewcommand{\d}{d}
\renewcommand{\phi}{\varphi}
\renewcommand{\epsilon}{\varepsilon}
\newcommand{\supp}{\operatorname{supp}}
\theoremstyle{plain}
\newtheorem{theorem}{Theorem}[section]
\newtheorem{corollary}[theorem]{Corollary}
\newtheorem{lemma}[theorem]{Lemma}
\newtheorem{proposition}[theorem]{Proposition}
\newtheorem{conjecture}[theorem]{Conjecture}
\theoremstyle{remark}
\newtheorem{remark}[theorem]{Remark}
\newtheorem*{claim*}{Claim}
\newtheorem*{remark*}{Remark}
\newtheorem*{example*}{Example}
\newtheorem*{notation*}{Notation}
\numberwithin{equation}{section}
\title[Quantum Talagrand, KKL and Friedgut's junta theorems]{Quantum Talagrand, KKL and Friedgut's theorems\\ and the learnability of quantum Boolean functions }
\author{Cambyse Rouz\'e}
\address[C. Rouz\'e]{Department of Mathematics, Technical University of Munich (TUM), Center Mathematics, M5, Boltzmannstrasse 3
	85748 Garching, Germany; Inria, Télécom Paris - LTCI, Institut Polytechnique de Paris, 91120 Palaiseau, France}
\email{rouzecambyse@gmail.com}
\author{Melchior Wirth}
\address[M. Wirth]{Institute of Science and Technology Austria (ISTA),
Am Campus 1, 3400 Klosterneuburg, Austria}
\email{melchior.wirth@ist.ac.at}
\author{Haonan Zhang}
\address[H. Zhang]{Institute of Science and Technology Austria (ISTA),
Am Campus 1, 3400 Klosterneuburg, Austria;
Department of Mathematics, University of California, Irvine, CA 92617, USA}
\address[H. Zhang]{(Current address) Department of Mathematics, University of South Carolina, Columbia, SC, 29208, USA}
\email{haonanzhangmath@gmail.com}
\subjclass[2020]{}
\begin{document}

\begin{abstract}
We extend three related results from the analysis of influences of Boolean functions to the quantum setting, namely the KKL Theorem, Friedgut's Junta Theorem and Talagrand's variance inequality for geometric influences. Our results are derived by a joint use of recently studied hypercontractivity and gradient estimates. These generic tools also allow us to derive generalizations of these results in a general von Neumann algebraic setting beyond the case of the quantum hypercube, including examples in infinite dimensions relevant to quantum information theory such as continuous variables  quantum systems. Finally, we comment on the implications of our results as regards to noncommutative extensions of isoperimetric type inequalities, quantum circuit complexity lower bounds and the learnability of quantum observables. 
\end{abstract}

\maketitle


\section{Introduction}

Let $\Omega_n=\{-1,1\}^n$ be the Boolean hypercube equipped with the uniform probability measure $\mu_n$.  For any $f:\Omega_n\to \real$, we denote by $\Var(f)=\Var_{\mu_n}(f)$ its variance, i.e. $\Var(f)=\E|f-\E f|^2$. For each $1\le j\le n$, the {\em influence} of the $j$-th variable on $f$ is given by 
\begin{equation*}
\Inf_j f:=\mathbb{E}\left[\left(\frac{f-f^{\oplus j}}2\right)^2\right],
\end{equation*}
where $f^{\oplus j}(x)=f(x^{\oplus j})$ and $x^{\oplus j}$ denotes the vector in $\Omega_n$ obtained by flipping the $j$-th variable, that is, for $x=(x_1,\dots, x_n)$, 
\begin{equation*}
x^{\oplus j}:=(x_1,\dots, x_{j-1},-x_j,x_{j+1},\dots, x_n)\,.
\end{equation*}

The notion of influences appears naturally in many contexts ranging from isoperimetric inequalities \cite{Keller12,cel12talagrand}, threshold phenomena in random graphs \cite{friedgut1996every}, cryptography  \cite{linial1993constant}, etc. For these reasons, the last three decades witnessed an extensive study of their properties, which led to many applications in theoretical computer science (hardness of approximation \cite{dinur2005hardness,haastad2001some} and learning theory \cite{o2007learning}), percolation theory \cite{benjamini1999noise}, social choice theory \cite{mossel2012quantitative,ben1985collective} to cite a few. 

Karpovsky \cite{Karpovsky76} proposed the sum of the influences (also called {\em total influence}),
	$$\Inf f:=\sum_{j=1}^n \Inf_j f,$$
		as a measure of complexity of a function $f$. This first intuition was then made rigorous in \cite{linial1993constant} and \cite{boppana1997average} where  tight circuit complexity lower bounds in terms of the total influence were derived for the complexity class $\operatorname{AC}^0$ of constant depth circuits. A simple lower bound on $\Inf f$ in terms of the variance can be derived from Poincar\'{e} inequality: For all $f\colon \Omega_n\to\mathbb R$ one has \cite[Chapter 2]{odonnell_2014}
\begin{equation}\label{ineq:classical poincare}
\Var(f)\le \Inf f\,.
\end{equation}

Functions on the hypercubes $\Omega_n$ that take only values in $\{-1,1\}$ are of particular interest. These are the so-called {\em Boolean functions} and play important roles in social science, combinatorics, computer sciences and many other areas. See \cite{dewolf2008brief,odonnell_2014} for more information. Note that the $L^p$-norms, $1\le p<\infty$, of Boolean functions are always equal to 1, where the weighted $L^p$-norm of a function $f:\Omega_n\to\mathbb{R}$ is defined as
\begin{align}\label{pnorm}
	\|f\|_p:=\left(\mathbb{E}\big[|f|^p\big]\right)^{\frac{1}{p}}\,.
\end{align}
 A Boolean function $f:\Omega_n\to \{-1,1\}$ is said to be {\em balanced} if $\E f=0$. If $f$ is a Boolean function, the influence of the $j$-th variable can further be expressed as
\begin{equation*}
\Inf_j f=\mathbb P(\{x\in\Omega_n\mid f(x)\neq f(x^{\oplus j})\}).
\end{equation*}
The Poincaré inequality \eqref{ineq:classical poincare} implies that there exists $j\in \{1,\dots,n\}$ such that $\Inf_j f\geq 1/n$. Note that Poincaré inequality \eqref{ineq:classical poincare} can be tight, e.g. for balanced Boolean function $f(x)=x_1.$ So it may happen that the total influence $\approx$ variance. Is it possible that all the influences are small simultaneously, that is, $\Inf_j(f)\approx \Var(f)/n$ for all $1\le j\le n$? Quite surprisingly, the answer is negative; a celebrated result of Kahn, Kalai and Linial \cite{kkl89influence} predicts that {\em every balanced Boolean function has an influential variable}. More precisely, Kahn, Kalai and Linial \cite{kkl89influence} proved that for any balanced Boolean function $f$ on $\Omega_n$, there exists $1\le j\le n$ such that 
\begin{equation}\label{KKL}
\Inf_j f\ge \frac{C\log (n)}{n}\,,
\end{equation}
where $C>0$ is some universal constant. So some variable has an influence at least $\Omega(\log(n)/n)$, which is larger than the order $1/n$ deduced from Poincar\'e inequality.

\medskip

This theorem of Kahn, Kalai and Linial (KKL in short) plays a fundamental role in Boolean analysis. It was further strengthened by	Talagrand \cite{talagrand94russo} and Friedgut \cite{friedgut1998boolean} in different directions.

 In his celebrated paper \cite{talagrand94russo}, Talagrand proved that for all $n\ge 1$ and $f:\Omega_n\to \real$, we have for some universal $C>0$ that 
\begin{equation}\label{ineq:talagrand}
\Var(f)\le C\sum_{j=1}^{n}\frac{\|D_j f\|^2_2}{1+\log(\|D_j f\|_2/\|D_j f\|_1)},
\end{equation}
where $D_j f(x):=\frac 1 2(f(x)-f(x^{\oplus j}))$. Note that if $f$ is Boolean, then $D_j f$ takes values only in $\{-1,0,1\}$, so that $\|D_j f \|_1=\|D_j f\|_2^2= \Inf_j f$. Therefore, this inequality of Talagrand \eqref{ineq:talagrand}, as an improvement of Poincar\'e inequality \eqref{ineq:classical poincare}, immediately implies the result of KKL. There are plenty of extensions of Talagrand's inequality \eqref{ineq:talagrand} \cite{o2013kkl,o2013sharpness,cel12talagrand}, which has become a central tool in theoretical computer science \cite{odonnell_2014}. Moreover, it provides a powerful tool to study sub-diffusive and super-concentration phenomena \cite{BenjaminiKalaiSchramm03,benjamini1999noise,chatterjee2014superconcentration,garban2014noise,auffinger201750,sosoe2018fluctuations,tanguy2018talagrand} ubiquitous to many models studied in modern probability theory (percolation, random matrices, spin glasses, etc.); see the review articles \cite{cel12talagrand,ledoux2019four} and references therein for more details.

Also related to the KKL theorem, Friedgut's Junta theorem \cite{friedgut1998boolean} states that a Boolean function with a bounded total influence essentially depends on few coordinates. More precisely, a Boolean function $f:\Omega_n\to \{-1,1\}$ is called a $k$-\textit{junta}, for $k\in \{1,\dots,n\}$ independent of $n$, if it depends on at most $k$ coordinates. When $k=1$, the function is called a \textit{dictatorship}. If $f$ is a junta, it is an immediate consequence that the total influence does not depend on $n$, i.e. $\Inf f=\mathcal{O}(1)$. Friedgut's Junta theorem provides the following converse statement: for any Boolean function $f:\Omega_n\to \{-1,1\}$ and $\epsilon>0$, there exists a $k$-junta $g:\Omega_n\to \{-1,1\}$ such that
\begin{align}\label{FriedgutJunta}
	\|f-g\|_2\le \epsilon\,,\qquad \text{ with }\quad k=2^{\mathcal{O}(\Inf f /\epsilon)}\,.
\end{align}
Since its discovery, Friedgut's Junta theorem has found many applications in random graph theory and the learnability of monotone Boolean functions \cite{o2007learning}.

\medskip

Judging from the range of applicability of these results, it is natural to consider their extensions to noncommutative or quantum settings. Partial results in this direction were obtained by Montanaro and Osborne \cite{mo10quantumboolean}. There, Boolean functions on the hypercube $\Omega_n$ were replaced by \textit{quantum Boolean functions on $n$ qubits}, that is, operators $A\in M_2(\com)^{\otimes n}$ acting on the $n$-fold tensor product of $\mathbb{C}^2$ with the additional conditions that $A=A^*$ and $A^2=\un$. Here and in what follows, $M_k(\com)$ denotes the $k$-by-$k$ complex matrix algebra. Then, the $L^2$-influence of $A$ in $j$-th coordinate is defined as $\Inf_j^2 A:= \|d_j A\|_2^2$, where we used $d_j$ to denote the quantum analogue of the bit-flip map 
$$d_j:=\mathbb I^{\otimes(j-1)}\otimes \left(\mathbb I-\frac 1 2\mathrm{tr}\right)\otimes \mathbb I^{\otimes(n-j)}$$
with $\mathbb{ I}$ being the identity map over $M_2(\com)$, and replaced the normalized $L^p$-norm on $\Omega_n$ by the normalized Schatten-$p$ norm on $M_2(\com)^{\otimes n}$. The quantum influence has already found interesting applications to quantum complexity theory \cite{bu2022complexity}. In this framework, Montanaro and Osborne \cite[Proposition 11.1]{mo10quantumboolean} proved a quantum analogue of Talagrand's inequality \eqref{ineq:talagrand}. However, this does not yield a quantum KKL as in the classical setting since we do not have the identity $\|d_j A\|_1=\|d_j A\|^2_2$ for general quantum Boolean functions. In the worst case, we may even have $\|d_j A\|_1=\|d_j A\|_2$ ({\em $j$ is a bad influence} according to \cite[Definition 11.2]{mo10quantumboolean}) and thus \eqref{ineq:talagrand} will not help anymore. For this reason, the problem of whether every balanced quantum Boolean function has an influential variable still remains open; see \cite{mo10quantumboolean} for some partial results and more discussions.

  In fact, the observation that $\|d_j A\|_1\ne \|d_j A\|^2_2$ is not exclusive to the quantum setting, and also arises for instance when considering extensions of the setup of Boolean functions on the hypercubes to functions on smooth manifolds, after replacing the uniform distribution on $\Omega_n$ by an appropriate finite measure, and the discrete derivatives $D_j$ by the partial derivatives associated to the differential structure of the manifold. In this setting, analogues of the previous results were recently obtained for the $L^1$-influences $\Inf^1_j A:=\|d_j A\|_1$, which is sometimes called {\em geometric influence} for its relation to isoperimetric inequalities \cite{Keller12,cel12talagrand,austin2016failure,bouyrie2017unified}.
 
 \medskip

In this paper, we propose to take the above considerations as a starting point for establishing quantum analogues of \eqref{ineq:classical poincare}, \eqref{KKL}, \eqref{ineq:talagrand} and \eqref{FriedgutJunta} based on the $L^1$-influences. Our first main result (Theorems \ref{quantumTalagrand} and \ref{thm:quantum Talagrand improved}) states that for any self-adjoint operator $A$ on $n$ qubits with $\|A\|\le 1$ we have 
\begin{equation}\label{ineq:kkl_quantum_1}
\|A-2^{-n}\tr(A)\|_2^2=:\Var(A)\le C \sum_{j=1}^{n} \frac{\|d_j A\|_1(1+\|d_j A\|_1)}{1+\log^+(1/\|d_j A\|_1)}
\end{equation}
for some universal $C>0$, where $\log^+$ refers to the positive part of the logarithm. In particular, this suggests that {\em every balanced quantum Boolean function has a variable that has geometric influence at least of the order $\log (n) /n$}. We also prove a quantum $L^1$-Poincar\'e inequality (Theorem \ref{thm:poincar}): for any operator $A$ on $n$ qubits we have
\begin{equation}\label{ineq:quantum_l1_poincare_intro}
\|A-2^{-n}\tr(A)\|_1\le \sum_{j=1}^{n}\Inf^1_j A\,.
\end{equation}
Therefore our result provides an alternative answer to the quantum KKL conjecture \cite[Conjecture 3 of Section 12]{mo10quantumboolean} in terms of geometric influences (Theorem \ref{thm:quantum kkl}). The inequality \eqref{ineq:kkl_quantum_1} is inspired by some results in the classical setting; see for example \cite{Keller12,cel12talagrand}. Since \eqref{ineq:kkl_quantum_1} will be our main focus, rather than \eqref{ineq:talagrand}, to distinguish them in the sequel, we shall refer to \eqref{ineq:kkl_quantum_1} as ($L^1$-)Talagrand's inequality, and \eqref{ineq:talagrand} as {\em Talagrand's $L^1$-$L^2$ variance inequality} as did in \cite{ledoux2019four}. We also have a qubit isoperimetric type inequality and a stronger form of $L^1$-Poincar\'e \eqref{ineq:quantum_l1_poincare_intro}; see Section \ref{subsect:isoperimetry} below.

Our second main result is a quantum analogue of Friedgut's Junta theorem (Theorem \ref{thm:L1friedgut} and Corollary \ref{coro:quantbooleanjunta}): for any quantum Boolean function $A\in M_2(\com)^{\otimes n}$ and $\epsilon>0$ there exists another quantum Boolean function $B\in M_2(\com)^{\otimes n}$ that is supported on $k$ subsystems such that 
\begin{equation}\label{quantumFried}
\|A-B\|_2\le  \epsilon \qquad \textnormal{ with }\qquad	k\leq 2^{\frac{270\Inf^2(A)}{\epsilon^2}}\frac{\Inf^1(A)^6}{\Inf^2(A)^5},
\end{equation}
where $\Inf^p(A):=\sum_{j=1}^n\,\Inf_j^p(A)$ with $\Inf^p_j(A)=\lVert d_j A\rVert_p^p$.

The proofs of Equations (\ref{ineq:kkl_quantum_1}) and (\ref{quantumFried}) make use of recent noncommutative generalizations of hypercontractive inequalities and gradient estimates \cite{olkiewicz1999hypercontractivity,mo10quantumboolean,kastoryano2013quantum,temme2014hypercontractivity,carlen2017gradient,datta2020relating,beigi2020quantum,WZ20,gao2021ricci,beigi2021improved}. Moreover, the generality of these tools also allows us to further extend most of our results  to the abstract von Neumann algebraic setting which contains both our previously stated results and their classical analogues previously established in \cite{cel12talagrand,bouyrie2017unified}, but also other extensions arising in noncommutative analysis and quantum information with discrete and continuous variables. As for their classical analogues, we expect our results to find many new applications to quantum information and quantum computation.

\smallskip

The rest of the paper is organized as follows: in Section \ref{qBoolean}, we recall useful definitions and results from the Fourier analysis on the quantum Boolean hypercubes including Poincar\'{e} inequality, hypercontractivity, intertwining and gradient estimates. Section \ref{sec:mainresults} is devoted to the statement and proof of our main results, namely a quantum $L^1$-Poincar\'e inequality (Theorem \ref{thm:poincar}), quantum Talagrand inequality (Theorem \ref{quantumTalagrand}), and quantum KKL theorem (Theorem \ref{thm:quantum kkl}) and a quantum Friedgut's Junta theorem (Theorem \ref{thm:L1friedgut} and Corollary \ref{coro:quantbooleanjunta}). These results are then extended to the general von Neumann algebraic setting in Section \ref{sec:generalization}.  Finally, examples and applications to quantum circuit complexity and quantum learning theory are provided in Sections \ref{sec:examples} and \ref{sec:applications}.

\subsection*{Acknowledgement} H.Z. is supported by the Lise Meitner fellowship, Austrian Science Fund (FWF) M3337. H.Z. would like to thank the American Institute of Mathematics and the AIM workshop {\em Analysis on the hypercube with applications to quantum computing}. He is also grateful to the organizers and other participants for creating an active atmosphere. The research of  C.R. has been supported by ANR project QTraj (ANR-20-CE40-0024-01) of the French
National Research Agency (ANR). C.R. acknowledges the support of the Munich Center for
Quantum Sciences and Technology, as well as the Humboldt Foundation. C.R.~would like to thank Amanda Young for fruitful discussion on the applications of Friedgut's Junta theorem to learning quantum dynamics.  The research of M.W. was funded by the Austrian Science Fund (FWF)
under the Esprit Programme [ESP 156]. For the purpose of Open Access, the authors have applied a CC BY public copyright licence to any Author Accepted Manuscript (AAM) version arising from this submission.
	The authors want to thank Francisco Escudero Gutierrez and Hsin-Yuan Huang for helpful comments on an earlier version of the paper. They are grateful to the referees for the careful reading and helpful comments. 

\section{Quantum Boolean analysis}\label{qBoolean}

Let us start by recapitulating the framework of quantum Boolean functions from \cite{mo10quantumboolean}. As a quantum analogue of functions on the Boolean hypercubes, i.e., functions of $n$ bits, we will take observables on $n$ qubits. In other words, our algebra of observables is $ M_2(\com)^{\otimes n}\cong M_{2^n}(\com)$ endowed with the operator norm $\|\cdot\|$. In what follows, we denote by $\tr$ the trace in $M_2(\com)^{\otimes n}$, and by $\tr_T$ the partial trace with respect to any subset $T$ of qubits. Following \cite[Definition 3.1]{mo10quantumboolean}, we say $A\in M_2(\com)^{\otimes n}$ is a \emph{quantum Boolean function} if $A=A^\ast$ and $A^2=\un$. Here and in what follows, $\un$ always denotes the identity operator. A quantum Boolean function $A$ is {\em balanced} if $\tr(A)=0$.

One pillar of analysis on the Boolean hypercube is that every function $f:\Omega_n\to\real$ has the Fourier--Walsh expansion, i.e. can be expressed as a linear combination of characters. Our quantum analogues of the characters for $1$ qubit are the Pauli matrices
\begin{equation*}
\sigma_0=\begin{pmatrix}1&0\\0&1\end{pmatrix},\quad \sigma_1=\begin{pmatrix}0&1\\1&0\end{pmatrix},\quad \sigma_2=\begin{pmatrix}0&-i\\i&0\end{pmatrix},\quad \sigma_3=\begin{pmatrix}1&0\\0&-1\end{pmatrix}.
\end{equation*}
Clearly, these are quantum Boolean functions, and they form a basis of $M_2(\com)$. For $s=(s_1,\dots, s_n)\in\{0,1,2,3\}^n$, we put
\begin{equation*}
\sigma_s:=\sigma_{s_1}\otimes\dots\otimes \sigma_{s_n}\, .
\end{equation*}
These are again quantum Boolean functions, and form a basis of $M_2(\com)^{\otimes n}$. Accordingly, every $A\in M_2(\com)^{\otimes n}$ can uniquely be expressed as
\begin{equation}\label{eq:quantum_fourier_expansion}
A=\sum_{s\in\{0,1,2,3\}^n} \widehat A_s \,\sigma_s
\end{equation}
where $\widehat A_s\in\mathbb C$ is the Fourier coefficient. Given $s\in\{0,1,2,3\}^n$, we call the set of indices $j$ such that $s_j\ne 0$ {\em the support of $s$}, and denote it by $\operatorname{supp}(s)$. Its cardinality is denoted by $|\operatorname{supp}(s)|$. Similarly, the support of $A$ is defined by
\begin{align}
	\operatorname{supp}(A):=\bigcup_{s|\widehat{A}_s\ne 0}\operatorname{supp}(s)\,,
\end{align}
and its cardinality is denoted by $|\operatorname{supp}(A)|$. In analogy with the classical setting, an arbitrary operator $A\in M_2(\mathbb{C})^{\otimes n}$ is called a {\em $k$-junta} if $|\operatorname{supp}(A)|\le k$. As the Pauli matrices are orthonormal with respect to the normalized Hilbert--Schmidt inner product, the coefficients $\widehat A_s$ can be recovered by
\begin{equation*}
\widehat A_s=\frac 1{2^n}\,\mathrm{tr}(\sigma_s A)\,.
\end{equation*}
Note that whenever $A$ is self-adjoint, the coefficients $\widehat{A}_s$ must be real. The quantum analogue of the bit-flip map is given by
\begin{equation*}
d_j(A):= \mathbb I^{\otimes(j-1)}\otimes \left(\mathbb I-\frac 1 2\mathrm{tr}\right)\otimes \mathbb I^{\otimes (n-j)}(A)
=\sum_{\substack{s\in \{0,1,2,3\}^n\\s_j\neq 0}}\widehat A_s \sigma_s\,.
\end{equation*}
Here $\mathbb I$ denotes the identity map on $M_2(\com)$. Note that $\mathcal L_0:=\mathbb I-\frac 1 2\mathrm{tr}$ satisfies $\mathcal L_0^2=\mathcal L_0$, so that $d_j^2=d_j$.

For $p\ge 1$, we denote by $\Inf^p_j(A):=\|d_j A\|_p^p$ the $L^p$-\textit{influence} of $j$ on the operator $A\in M_2(\com)^{\otimes n}$, and by $\Inf^p (A):=\sum_{j=1}^{n}\Inf_j^p(A)$ the associated \textit{total $L^p$-influence}, where the normalized Schatten-$p$ norm of an operator $A\in M_2(\com)^{\otimes n}$ is defined as ($|A|:=(A^\ast A)^{1/2}$)
 \begin{align*}
 	\|A\|_p:=\Big(\frac{1}{2^n}\tr\big|A\big|^p\Big)^{\frac{1}{p}}\,.
 \end{align*}
The $L^1$-influence is also called {\em the geometric influence}. For the $L^2$-influence we have
\begin{equation}\label{eq:l2_influence_dirichlet}
\Inf^2(A)=\sum_{j=1}^n \frac 1 {2^n}\mathrm{tr}((d_j A)^\ast d_j(A))=\frac 1 {2^n}\mathrm{tr}(A^\ast \mathcal L(A))
\end{equation}
with $\mathcal L:=\sum_{j=1}^n \d_j$. The operator $\mathcal{L}$ is the generator of the tensor product of the {\em quantum depolarizing semigroups}  $(P_t)_{t\ge 0}$ for the individual qubits:
\begin{equation}\label{eq:defn of semigroup}
P_t=e^{-t\mathcal L}=\left(e^{-t}\,\mathbb I+(1-e^{-t})\frac{\mathrm{tr}}{2}(\cdot)\un\right)^{\otimes n}\underset{t\to\infty}{\longrightarrow }\frac{1}{2^n}\,\tr(\cdot)\,.
\end{equation}
It is a {\em tracially symmetric quantum Markov semigroup}, whose general properties are discussed in Section \ref{sec:generalization}.

In the Fourier decomposition, we have the following convenient expressions for the $L^2$-influence:
\begin{equation}\label{eq:influence_fourier_basis}
\Inf^2(A)=\sum_{s\in \{0,1,2,3\}^n}\lvert\supp(s)\rvert \lvert\widehat A_s\rvert^2,
\end{equation}
and the semigroup $P_t$:
\begin{equation}\label{eq:QMS_fourier_basis}
P_t(A)=\sum_{s\in \{0,1,2,3\}^n}e^{-t\lvert\supp(s)\rvert}\widehat A_s\sigma_s.
\end{equation}

In addition we need the following further facts:

\begin{lemma}[Poincaré inequality, see Proposition 10.9 of \cite{mo10quantumboolean}]\label{lem:l2poincare-quantum}
For all $A\in M_2(\com)^{\otimes n}$ such that $\tr(A)=0$ and $t\geq 0$, one has 
\begin{equation*}
\lVert P_t(A)\rVert_2\leq e^{-t}\lVert A\rVert_2. 
\end{equation*}
\end{lemma}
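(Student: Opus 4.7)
The plan is to diagonalize $P_t$ in the Pauli basis and use orthogonality, exploiting the fact that the condition $\tr(A)=0$ eliminates the only Fourier mode with eigenvalue $1$.

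First I would expand $A$ as in \eqref{eq:quantum_fourier_expansion}:
$$A=\sum_{s\in\{0,1,2,3\}^n}\widehat{A}_s\,\sigma_s.$$
Since $\tr(\sigma_s)=2^n\delta_{s,0}$ (the only Pauli product with nonzero trace is the identity $\sigma_0=\un$), the assumption $\tr(A)=0$ is equivalent to $\widehat{A}_0=0$. Thus the Fourier expansion of $A$ only runs over multi-indices $s$ with $|\operatorname{supp}(s)|\ge 1$.

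Next I would compute the action of $P_t$ on each Pauli tensor. Because $P_t$ factorises across the tensor product as in \eqref{eq:defn of semigroup}, and because on a single qubit the conditional expectation $\tfrac12\tr(\cdot)\un$ annihilates $\sigma_1,\sigma_2,\sigma_3$ while fixing $\sigma_0$, one obtains on each factor
$$\bigl(e^{-t}\,\mathbb I+(1-e^{-t})\tfrac12\tr(\cdot)\un\bigr)(\sigma_{s_j})=\begin{cases}\sigma_0 & \text{if } s_j=0,\\ e^{-t}\sigma_{s_j} & \text{if } s_j\ne 0.\end{cases}$$
Taking the tensor product gives $P_t\sigma_s=e^{-t|\operatorname{supp}(s)|}\sigma_s$, i.e.\ the Pauli words diagonalise $\mathcal L$ with eigenvalues $|\operatorname{supp}(s)|$.

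Finally I would use Parseval. The Pauli words $\{\sigma_s\}$ are orthonormal with respect to the normalised Hilbert--Schmidt inner product $\langle X,Y\rangle=2^{-n}\tr(X^\ast Y)$, so
$$\|P_tA\|_2^2=\sum_{s\ne 0}|\widehat{A}_s|^2\,e^{-2t|\operatorname{supp}(s)|}\le e^{-2t}\sum_{s\ne 0}|\widehat{A}_s|^2=e^{-2t}\|A\|_2^2,$$
where the inequality uses $|\operatorname{supp}(s)|\ge 1$ for every $s$ with $\widehat{A}_s\ne 0$. Taking square roots yields the claim. There is no real obstacle here; the only point that requires care is identifying $\tr(A)=0$ with the vanishing of the unique Fourier coefficient corresponding to eigenvalue $1$, which is what creates the spectral gap.
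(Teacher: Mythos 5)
Your proof is correct: diagonalizing $P_t$ in the Pauli basis (eigenvalue $e^{-t|\operatorname{supp}(s)|}$ on $\sigma_s$), identifying $\tr(A)=0$ with $\widehat A_0=0$, and applying Parseval is exactly the standard argument. The paper does not reprove this lemma but cites Proposition 72 of \cite{mo10quantumboolean}, where essentially this same Fourier--Walsh spectral argument is used, so your route coincides with the intended one.
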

This inequality is equivalent to
\begin{equation*}
\Var(A)\leq \frac 1{1-e^{-2t}}(\lVert A\rVert_2^2-\lVert P_t(A)\rVert_2^2). 
\end{equation*}
and is also equivalent to
\begin{equation*}
\Var(A)\leq \Inf^2(A).
\end{equation*}

\begin{lemma}[Hypercontractivity, see Theorem 8.4 of \cite{mo10quantumboolean}]\label{lem:hypercontractivity}
For all $A\in M_2(\com)^{\otimes n}$, $t\ge 0$ and $p=p(t)= 1+e^{-2t}$ one has 
\begin{equation*}
\lVert P_t(A)\rVert_2\leq \lVert A\rVert_p\,.
\end{equation*}
\end{lemma}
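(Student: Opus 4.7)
The plan is to follow the classical Bonami--Beckner scheme: establish the inequality first for a single qubit by direct computation, then tensorize up to $n$ qubits. I would begin by expanding a single-qubit operator $A \in M_2(\com)$ in the Pauli basis as $A = \alpha \sigma_0 + \vec\beta \cdot \vec\sigma$, so that $P_t A = \alpha \sigma_0 + e^{-t} \vec\beta \cdot \vec\sigma$. In the self-adjoint case ($\alpha \in \real$, $\vec\beta \in \real^3$) the eigenvalues of $A$ are $\alpha \pm |\vec\beta|$, giving $\|P_t A\|_2^2 = \alpha^2 + e^{-2t}|\vec\beta|^2$ and $\|A\|_p^p = \tfrac{1}{2}(|\alpha+|\vec\beta||^p + |\alpha-|\vec\beta||^p)$. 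With $p = 1+e^{-2t}$ the required inequality reduces precisely to the classical two-point Bonami--Beckner inequality, which is provable by elementary calculus. The non-self-adjoint case can be reached either by direct computation of the singular values of $A$ in $M_2(\com)$, which are still expressible in closed form in terms of $\alpha$ and $\vec\beta$, or by a complex-interpolation/polarization argument that exploits $P_t(A^*) = (P_t A)^*$.

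To tensorize to $n$ qubits, one uses the factorization $P_t = (P_t^{(1)})^{\otimes n}$ and peels off one qubit at a time. In the classical setting this step is handled by Minkowski's integral inequality; here the analogous interchange of $L^2$ and $L^p$ norms across the bipartition $M_2(\com)^{\otimes n} = M_2(\com)^{\otimes (n-1)} \otimes M_2(\com)$ is genuinely subtle, but for $1 \le p \le 2$ and the tracially symmetric completely positive unital maps $P_t^{(j)}$ the required noncommutative Minkowski-type inequality does hold. Verifying this interchange is the principal obstacle of the proof, and I would carry it out via Stein's complex interpolation theorem or via Hanner/Clarkson-type inequalities for Schatten norms applied to $P_t$ regarded as a contraction on each factor.

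Should the noncommutative Minkowski step prove too cumbersome in that form, I would fall back on a logarithmic Sobolev route: prove the sharp modified logarithmic Sobolev inequality for the single-qubit depolarizing semigroup by direct computation on $M_2(\com)$; tensorize the MLSI, which is clean because $P_t$ is tracially symmetric and the qubit tensor factors commute; and finally integrate along the semigroup flow using the noncommutative tracial form of Gross's theorem to recover the stated hypercontractive bound. Either way, the overall structure is \emph{two-point inequality} plus \emph{tensorization}, exactly as in the classical case, with the noncommutativity showing up only in the tensorization step.
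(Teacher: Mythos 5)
There is a genuine gap, and it sits exactly where you flagged ``the principal obstacle.'' First, note that the paper does not prove this lemma at all: it is quoted from Montanaro--Osborne (Theorem 46), whose proof follows Ben-Aroya--Regev--de Wolf's hypercontractive inequality for matrix-valued functions and runs by induction on the number of qubits using the Ball--Carlen--Lieb optimal $2$-uniform convexity inequality for Schatten norms, $\bigl(\tfrac{1}{2}\|A+B\|_p^2+\tfrac{1}{2}\|A-B\|_p^2\bigr)^{1/2}\ge \bigl(\|A\|_p^2+(p-1)\|B\|_p^2\bigr)^{1/2}$ for $1\le p\le 2$ (alternatively one can invoke King's multiplicativity theorem for $p\to q$ norms of unital qubit channels). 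Your single-qubit reduction to the two-point Bonami--Beckner inequality is fine in the self-adjoint case (and the general case is manageable), but the tensorization step you propose does not go through as described. The ``interchange of $L^2$ and $L^p$ norms across the bipartition'' is not a Minkowski-type inequality one can simply verify: in the noncommutative setting the iterated mixed norm $\lVert\,\lVert\cdot\rVert_{L^p(M_2)}\,\rVert_{L^2(M_2^{\otimes(n-1)})}$ has no naive meaning (there is no pointwise evaluation in one tensor factor), and making sense of it requires Pisier's vector-valued noncommutative $L^p$ spaces, where the inequality you need is essentially equivalent to the multiplicativity/hypercontractivity statement being proven. This is precisely why the known proofs replace the classical Minkowski step by Ball--Carlen--Lieb convexity or by King's theorem; neither Stein interpolation nor Hanner/Clarkson inequalities is known to substitute for them here.

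Your fallback route has the same problem in a different guise. Tensorization of (modified) logarithmic Sobolev inequalities is \emph{not} ``clean'' in the noncommutative setting: unlike classically, the LSI constant is not known to be stable under tensor products in general (this is the very reason complete versions such as CLSI were introduced), and the uniform-in-$n$ log-Sobolev constant for the product depolarizing semigroup is essentially equivalent to the hypercontractivity statement you are trying to prove, so assuming it is circular. Moreover, the modified LSI is not the inequality that feeds into Gross's integration argument; one needs the $L^p$-regularity framework of Olkiewicz--Zegarlinski and the standard (non-modified) LSI with the sharp constant. To close the gap, replace the Minkowski step by an induction on qubits \`a la Ben-Aroya--Regev--de Wolf using the Ball--Carlen--Lieb inequality (handling the four Pauli directions per qubit as in Montanaro--Osborne), or simply reduce to the single-qubit bound via King's multiplicativity theorem for unital qubit channels.
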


\begin{lemma}[Intertwining]\label{lem:intertwining}
For all $j\in\{1,\dots,n\}$ and $t\geq 0$ one has
\begin{equation*}
d_j P_t=P_t d_j.
\end{equation*}
\end{lemma}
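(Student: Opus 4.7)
The plan is to exploit the product structure of both $d_j$ and $P_t$. Writing $T_t := e^{-t}\mathbb{I} + (1-e^{-t})E_0$ on $M_2(\com)$, where $E_0(\cdot) := \tfrac{1}{2}\tr(\cdot)\un$ denotes the normalized-trace projection onto scalars, one reads off from \eqref{eq:defn of semigroup} that $P_t = T_t^{\otimes n}$. Since $d_j$ acts as $\mathcal{L}_0 = \mathbb{I} - E_0$ on the $j$-th tensor factor and as the identity on every other factor, both $d_j P_t$ and $P_t d_j$ take the form $T_t^{\otimes (j-1)} \otimes X \otimes T_t^{\otimes (n-j)}$, with $X = \mathcal{L}_0 T_t$ in one case and $X = T_t \mathcal{L}_0$ in the other. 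The lemma therefore reduces to the single-qubit commutation $\mathcal{L}_0 T_t = T_t \mathcal{L}_0$ on $M_2(\com)$.

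This one-site identity is immediate from the fact that $E_0$ is an idempotent: $E_0(\un) = \un$ and the range of $E_0$ is $\com\un$, so $E_0^2 = E_0$. Hence $\mathcal{L}_0 E_0 = (\mathbb{I} - E_0) E_0 = 0$ and symmetrically $E_0 \mathcal{L}_0 = 0$, which forces $\mathcal{L}_0$ to commute with every affine combination of $\mathbb{I}$ and $E_0$, in particular with $T_t$. Alternatively, one can verify the intertwining directly on the Pauli basis: $T_t(\sigma_0) = \sigma_0$ and $T_t(\sigma_k) = e^{-t}\sigma_k$ for $k \in \{1,2,3\}$ (since $\tr(\sigma_k) = 0$), which gives $P_t \sigma_s = e^{-t|\operatorname{supp}(s)|}\sigma_s$, while $d_j \sigma_s = \sigma_s$ when $s_j \neq 0$ and $d_j \sigma_s = 0$ otherwise. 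Both maps are then diagonal in the Pauli basis and trivially commute; linearity via the expansion \eqref{eq:quantum_fourier_expansion} extends the identity to arbitrary $A \in M_2(\com)^{\otimes n}$.

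There is essentially no obstacle here — the intertwining is a purely structural consequence of the product form of the depolarizing semigroup. Its real value in what follows is that, combined with hypercontractivity of $P_t$ (Lemma \ref{lem:hypercontractivity}), it allows one to replace $d_j P_t A$ by $P_t d_j A$ and thus apply the semigroup smoothing directly to $d_j A$, which is the standard mechanism underlying KKL- and Talagrand-type arguments used later in the paper.
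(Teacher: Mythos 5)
Your proof is correct and matches the paper's approach: the paper simply states that the intertwining follows easily from the definitions of $P_t$ and $d_j$, and your argument—reducing to the single-qubit commutation of $\mathcal L_0=\mathbb I-E_0$ with $T_t=e^{-t}\mathbb I+(1-e^{-t})E_0$ via the product structure, or equivalently checking both maps are diagonal in the Pauli basis—is exactly the routine verification being alluded to.
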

\begin{proof}
	Follows easily from the definitions of $P_t$ and $d_j$. 
\end{proof}
We denote by $\Gamma: M_2(\com)^{\otimes n}\to M_2(\com)^{\otimes n}$ the {\em carr\'e du champ} operator associated to $P_t=e^{-t\mathcal L}$ which is defined via:
$$2\Gamma(A):=\mathcal L(A^\ast)A+A^\ast \mathcal L(A)-\mathcal L(A^\ast A)
=\sum_{j=1}^{n}\d_j(A^\ast)A+A^\ast \d_j A-\d_j(A^\ast A)\,.$$

\begin{lemma}[Gradient estimate \cite{junge2015noncommutative,WZ20}]\label{gradient}
For any $A\in M_2(\com)^{\otimes n}$ and all $t\ge 0$, 
\begin{equation*}
	\Gamma(P_t A)\le e^{-t}P_t\Gamma(A)\,.
\end{equation*}
\end{lemma}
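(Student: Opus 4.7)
My plan is to prove the estimate in two main stages: first verify it on a single qubit by a direct calculation, then lift it to $n$ qubits by a tensorization argument that exploits the complete positivity of the single-qubit carr\'e du champ as a sesquilinear form.

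For a single qubit, expanding any $A\in M_2(\com)$ as $A = B + \frac{1}{2}\tr(A)\un$ with $B$ traceless, a short computation from the definition yields the closed form $\Gamma(A) = \frac{1}{2}B^*B + \frac{1}{4}\tr(B^*B)\un$. Using $P_t A = e^{-t}B + \frac{1}{2}\tr(A)\un$, one finds $\Gamma(P_t A) = e^{-2t}\Gamma(A)$, while $P_t\Gamma(A)$ differs from $e^{-t}\Gamma(A)$ by a manifestly positive multiple of $\tr(B^*B)\un$. This provides precisely the slack needed so that $\Gamma(P_t A)\le e^{-t}P_t\Gamma(A)$ reduces to the trivial inequality $e^{-t}\le 1$.

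For the tensorization, since $\mathcal L = \sum_j d_j$ the carr\'e du champ decomposes as $\Gamma = \sum_j \Gamma_j$, where $\Gamma_j$ is built from $d_j$ alone, so it is enough to establish the per-site estimate $\Gamma_j(P_t A)\le e^{-t}P_t\Gamma_j(A)$ for each $j$. I identify $M_2(\com)^{\otimes n} \cong M_2(\com)\otimes M_2(\com)^{\otimes(n-1)}$ with the $j$-th qubit in the first factor and factor $P_t = P_t^{(j)}\otimes Q_t^{(j)}$, then chain
\begin{equation*}
\Gamma_j(P_t A) \,=\, \Gamma_j\bigl(P_t^{(j)} Q_t^{(j)} A\bigr) \,\le\, e^{-t} P_t^{(j)} \Gamma_j\bigl(Q_t^{(j)} A\bigr) \,\le\, e^{-t} P_t^{(j)} Q_t^{(j)} \Gamma_j(A) \,=\, e^{-t} P_t \Gamma_j(A)
\end{equation*}
and sum over $j$.

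The main obstacle is upgrading the scalar single-qubit estimate to the two operator-valued inequalities in the chain above. Expanding $A = \sum_\alpha X_\alpha \otimes Y_\alpha$, the carr\'e du champ takes the sesquilinear form $\Gamma_j(A) = \sum_{\alpha,\beta} \Gamma^{(1)}(X_\alpha, X_\beta)\otimes Y_\alpha^* Y_\beta$, where $\Gamma^{(1)}(X,Y) = \frac{1}{2} B_X^* B_Y + \frac{1}{4}\tr(B_X^* B_Y)\un$ is the polarization of the single-qubit form. The key observation is that $\Gamma^{(1)}$ is a sum of two \emph{completely positive} sesquilinear forms: the operator Gramian $(X,Y)\mapsto \frac{1}{2}B_X^*B_Y$ and the Hilbert--Schmidt Gramian $(X,Y)\mapsto \frac{1}{4}\tr(B_X^*B_Y)\un$. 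The first inequality in the chain follows from the scalar single-qubit argument, since the slack, once tensored with $Y_\alpha^* Y_\beta$ and summed in $(\alpha,\beta)$, becomes the tensor product of a PSD matrix with a Gramian of operators and hence is positive. The second inequality is a Kadison--Schwarz-type estimate obtained by writing the defect $Q_t^{(j)}(Y_\alpha^* Y_\beta) - (Q_t^{(j)} Y_\alpha)^*(Q_t^{(j)} Y_\beta) = W_\alpha^* W_\beta$ through a Stinespring dilation of $Q_t^{(j)}$ and then invoking the complete positivity of $\Gamma^{(1)}$ to conclude $\sum_{\alpha,\beta}\Gamma^{(1)}(X_\alpha, X_\beta)\otimes W_\alpha^* W_\beta \ge 0$.
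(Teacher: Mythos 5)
Your argument is correct, and it is worth noting that the paper itself gives no proof of Lemma \ref{gradient}: the estimate is simply quoted from \cite{junge2015noncommutative,WZ20}, where it arises as a special case of a complete gradient estimate (the case $K=1/2$ of assumption \ref{(A1)}, as the paper notes in Section \ref{sec:examples}). So your blind proof supplies a self-contained elementary argument for something the paper delegates to the literature, and it does so by the natural mechanism: an exact one-site computation plus a tensorization that only works because the one-site estimate is verified in a \emph{completely} positive (operator-coefficient) form. All the steps that needed checking do go through: the closed form $\Gamma(A)=\tfrac12 B^*B+\tfrac14\tr(B^*B)\un$ on one qubit is correct, it yields the identity $\Gamma(P_tA)=e^{-2t}\Gamma(A)$ and the slack $e^{-t}P_t\Gamma(A)-\Gamma(P_tA)=\tfrac{e^{-t}(1-e^{-t})}{2}\tr(B^*B)\un\ge 0$; the splitting $\Gamma=\sum_j\Gamma_j$ with $\Gamma_j(A)=\sum_{\alpha,\beta}\Gamma^{(1)}(X_\alpha,X_\beta)\otimes Y_\alpha^*Y_\beta$ is independent of the chosen decomposition of $A$ because it equals $\tfrac12\bigl(d_j(A^*)A+A^*d_j(A)-d_j(A^*A)\bigr)$; both pieces of $\Gamma^{(1)}$ (the operator Gramian $\tfrac12 B_X^*B_Y$ and the Hilbert--Schmidt Gramian $\tfrac14\tr(B_X^*B_Y)\un$) are indeed completely positive sesquilinear forms, which is exactly what legitimizes the two operator-valued inequalities in your chain; and the Kadison--Schwarz defect of the unital CP map $Q_t^{(j)}$ does factor as a Gram matrix $W_\alpha^*W_\beta$ through Stinespring, the fact that the $W_\alpha$ map into the dilation space being harmless for the positivity argument. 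Summing the per-site bounds and using positivity of $P_t$ gives the lemma with constant $e^{-t}$; the drop from the exact $e^{-2t}$ decay of $\Gamma(P_tA)$ to $e^{-t}$ is forced in this scheme, since $P_t\Gamma(A)\ge\Gamma(A)$ fails in general, and it matches the constant $K=1/2$ invoked later for the generalized depolarizing semigroup. In effect, the ``complete'' single-qubit estimate you isolate is precisely the notion under which gradient estimates tensorize in \cite{WZ20}, so your proof is a faithful, hands-on instance of that general mechanism specialized to the quantum hypercube.
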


We close this section by remarking that classical Boolean functions are special quantum Boolean functions. In fact, the Fourier--Walsh expansions of classical Boolean functions correspond to \eqref{eq:quantum_fourier_expansion} when restricting $s\in\{0,3\}^n$ .

\section{Main results for quantum Boolean functions} \label{sec:mainresults}
In this section we state and prove our main results in the restricted setting of  the quantum Boolean cube.

\subsection{A quantum $L^1$-Poincaré inequality}

We start with the following $L^1$-Poincar\'e type inequality; see also \cite{de2022limitations} for variations of this inequality and Section \ref{subsect:isoperimetry} for a stronger form. 

\begin{theorem}\label{thm:poincar}
	For all $A\in M_2(\com)^{\otimes n}$, one has
	\begin{equation}\label{ineq:l1poincare}
		\|A-2^{-n}\tr(A)\|_1\le \Inf^1 (A)\,.
	\end{equation}
\end{theorem}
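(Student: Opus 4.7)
The plan is to combine the fundamental theorem of calculus for the depolarizing semigroup $(P_t)_{t\geq 0}$ with the intertwining of Lemma \ref{lem:intertwining}. Since $P_0=\mathrm{id}$ and $P_tA\to 2^{-n}\tr(A)\un$ as $t\to\infty$ by \eqref{eq:defn of semigroup}, I would start from
\[
A-2^{-n}\tr(A)\un \;=\; -\int_0^\infty \frac{d}{dt} P_t A\,dt \;=\; \int_0^\infty \mathcal L P_t A\,dt \;=\; \sum_{j=1}^n \int_0^\infty d_j P_t A\,dt,
\]
and then apply the triangle inequality in $L^1$ to reduce matters to bounding each $\int_0^\infty \|d_j P_t A\|_1\,dt$.

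The next step is to control each integrand using intertwining, $d_j P_t = P_t d_j$, and the structural fact that $d_j A$ has vanishing partial trace at the $j$-th qubit. Expanding $d_j A$ in the Pauli basis at position $j$, every term carries a factor $\sigma_s$ with $s\neq 0$ at site $j$; therefore the single-qubit factor $P_t^{(j)} = e^{-t}\mathbb I + (1-e^{-t})\frac{\tr}{2}\un$ acts on $d_j A$ as pure multiplication by $e^{-t}$. Since the remaining factors $P_t^{(i)}$ with $i\neq j$ are one-qubit depolarizing channels, hence CPTP maps and in particular contractions on the Schatten $L^1$-norm, one obtains
\[
\|P_t d_j A\|_1 \;=\; \|P_t^{(j)} d_j A\|_1 \cdot (\text{contraction}) \;\le\; e^{-t}\,\|d_j A\|_1.
\]

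Integrating $\int_0^\infty e^{-t}\,dt = 1$ and summing over $j$ then gives
\[
\|A-2^{-n}\tr(A)\|_1 \;\le\; \sum_{j=1}^n \|d_j A\|_1 \;=\; \Inf^1(A),
\]
which is in fact slightly sharper than the advertised bound with constant $\pi$ and hence implies the theorem a fortiori. The only subtle step is the $L^1$-contractivity of the single-qubit factors $P_t^{(i)}$, which is standard for quantum channels; the extra factor $\pi$ in the statement is presumably an artifact of an alternative argument the authors prefer (for instance one going through the gradient estimate of Lemma \ref{gradient} or a subordinated Poisson-type semigroup, either of which naturally introduces $\pi$ through an angular or subordination integral).
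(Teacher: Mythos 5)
Your argument is correct, and in fact it proves more than the stated theorem: exploiting that $\tr_j(d_jA)=0$, so that the $j$-th factor of $P_t$ acts on $d_jA$ as multiplication by $e^{-t}$ while the remaining factors form a CPTP (hence trace-norm contractive) map, you get $\|d_jP_tA\|_1=\|P_td_jA\|_1\le e^{-t}\|d_jA\|_1$ and hence the bound with constant $1$ instead of $\pi$ after integrating $\int_0^\infty e^{-t}\,dt=1$. (Your displayed line ``$\|P_td_jA\|_1=\|P_t^{(j)}d_jA\|_1\cdot(\text{contraction})$'' should be read as factoring $P_t=(\bigotimes_{i\ne j}P_t^{(i)})\circ P_t^{(j)}$, applying $P_t^{(j)}(d_jA)=e^{-t}d_jA$ first and then $L^1$-contractivity of the rest; with that reading the step is sound.) This is genuinely different from the paper's proof, and your closing guess about the origin of $\pi$ is exactly right: the paper argues by duality, writing $\|A-2^{-n}\tr(A)\|_1=\sup_{\|B\|\le 1}|\langle A-2^{-n}\tr(A),B\rangle|$, expanding via the semigroup, pairing $|\langle d_jA,d_jP_tB\rangle|\le\|d_jA\|_1\,\|d_jP_tB\|$, and invoking Lemma \ref{lem:technical lemma} (a consequence of the gradient estimate, Lemma \ref{gradient}) to get $\|d_jP_tB\|\le \|B\|/\sqrt{e^t-1}$, whence the constant $\int_0^\infty (e^t-1)^{-1/2}\,dt=\pi$. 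What your route buys is an elementary proof with a sharper constant on the quantum hypercube (essentially the noncommutative analogue of the classical telescoping/martingale argument), but it leans on the special structure $d_j^2=d_j$, the tensor-product form of $P_t$, and the vanishing partial trace of $d_jA$. What the paper's route buys is robustness: the duality-plus-gradient-estimate argument uses none of that structure and carries over verbatim to the general von Neumann algebraic setting of Section \ref{sec:generalization}, where the $d_j$ are derivations such as $[v_j,\cdot]$ and your ``multiplication by $e^{-t}$'' step has no analogue; the same $L^\infty$ bound also powers the Talagrand inequality and the stronger square-function form of the $L^1$-Poincar\'e inequality (Theorem \ref{thm:strong_L1_poincare}).
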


\begin{proof}
	This follows from a simple use of the triangle inequality for the $L^1$-norm as well as monotonicity under the normalized partial trace. 
\end{proof}

\subsection{A quantum $L^1$-Talagrand inequality}
We first prove a quantum $L^1$-Talagrand inequality on quantum Boolean cubes that can be extended to more general von Neumann algebras; see Section \ref{sec:generalization}. We will see later that on quantum Boolean cubes the estimates can be improved, so that we may deduce a sharp quantum KKL theorem for $L^1$-influences.

\begin{theorem}\label{quantumTalagrand}
For all $A\in M_2(\com)^{\otimes n}$ with $\lVert A\rVert\leq 1$ one has
\begin{equation}\label{eq:L1Talagrand}
\Var(A)\le C \sum_{j=1}^{n} \frac{\|d_j A\|_1(1+\|d_j A\|_1)}{[1+\log^+(1/\|d_j A\|_1)]^{1/2}}\,,
\end{equation}
for some universal $C>0$.
\end{theorem}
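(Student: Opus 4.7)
The plan is to use the standard semigroup representation of the variance: since $\mathcal L = \sum_j d_j$ with each $d_j$ an $L^2$-self-adjoint idempotent, differentiation of $t\mapsto \|P_t A\|_2^2$ gives
\begin{equation*}
\Var(A) = 2\sum_{j=1}^n \int_0^\infty \|d_j P_t A\|_2^2\, dt.
\end{equation*}
Writing $a_j := \|d_j A\|_1$, I will bound each summand by combining two complementary estimates on $\|d_j P_t A\|_2^2$, applied on disjoint time intervals separated by a threshold $t_j^\ast$ tailored to $a_j$.

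The first estimate uses hypercontractivity (Lemma \ref{lem:hypercontractivity}) together with intertwining (Lemma \ref{lem:intertwining}): $\|d_j P_t A\|_2 = \|P_t d_j A\|_2 \le \|d_j A\|_{p_t}$ with $p_t = 1+e^{-2t}$. Log-convexity of $L^p$-norms, combined with the trivial bound $\|d_j A\|_\infty \le 2\|A\| \le 2$, then yields
\begin{equation*}
\|d_j P_t A\|_2^2 \le 4^{1-1/p_t}\, a_j^{2/p_t},
\end{equation*}
which is small for small $t$ (since $p_t$ is then close to $2$). The second estimate uses Lemma \ref{lem:technical lemma} to get $\|d_j P_t A\|_\infty \le 1/\sqrt{e^t-1}$, together with the $L^1$-contractivity of $P_t$ (via intertwining) giving $\|d_j P_t A\|_1 \le a_j$, and the inequality $\|X\|_2^2 \le \|X\|_1\|X\|_\infty$:
\begin{equation*}
\|d_j P_t A\|_2^2 \le \frac{a_j}{\sqrt{e^t-1}},
\end{equation*}
which is small for large $t$.

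I will then take the threshold $t_j^\ast := \log\bigl(2 + \log^+(1/a_j)\bigr)$, using the hypercontractivity bound on $[0,t_j^\ast]$ and the gradient bound on $[t_j^\ast,\infty)$. The tail is handled by the explicit antiderivative $\int dt/\sqrt{e^t-1} = 2\arctan(1/\sqrt{e^t-1}) + \mathrm{const}$, producing a contribution $\lesssim a_j/\sqrt{1+\log^+(1/a_j)}$, which already furnishes the logarithmic improvement. For the head, the identity $2/p_t - 1 = \tanh t$ rewrites the integrand as $4^{1-1/p_t} a_j \cdot a_j^{\tanh t}$; when $a_j \ge 1$ the head contributes $\lesssim a_j^2$ (absorbed by the factor $1+a_j$), and when $a_j < 1$ the substitution $u = \tanh t$ reduces the head integral to a multiple of $a_j\int_0^{\tanh t_j^\ast} e^{-uL}/(1-u^2)\,du$ with $L = \log(1/a_j)$. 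Splitting this at $u = 1/2$ and bounding $1/(1-u^2)$ on each piece shows it is $\lesssim a_j/L$, which is even better than $a_j/\sqrt{1+\log^+(1/a_j)}$.

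The main technical obstacle will be this last step: carefully matching the head and tail contributions and tracking constants across the transition at $a_j = 1$, so that both the small-$a_j$ (logarithmic) and the large-$a_j$ (quadratic) regimes fit into the single expression $a_j(1+a_j)/[1+\log^+(1/a_j)]^{1/2}$ with a universal constant $C$.
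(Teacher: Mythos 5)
Your argument is correct in substance, and it reorganizes the same toolbox as the paper rather than reproducing its proof. The paper keeps a single integrand valid for all times: it writes $d_jP_{2t}A=P_t d_jP_tA$, applies hypercontractivity to the outer $P_t$, and then interpolates $\lVert d_jP_tA\rVert_{p(t)}$ between the \emph{time-decaying} bound $\lVert d_jP_tA\rVert_\infty\le (e^t-1)^{-1/2}$ of Lemma \ref{lem:technical lemma} and $L^1$-contractivity, integrates only over the fixed window $[0,1/2]$ via the calculus Lemma \ref{lem:integral_bound}, and closes with the $L^2$-Poincar\'e inequality (Lemma \ref{lem:l2poincare-quantum}) to pass from $\lVert A\rVert_2^2-\lVert P_1A\rVert_2^2$ to $\Var(A)$. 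You instead integrate to infinity (so no Poincar\'e step is needed), and you deploy the two decay mechanisms on disjoint time regimes with an $a_j$-dependent crossover $t_j^\ast$: hypercontractivity interpolated against the static bound $\lVert d_jA\rVert_\infty\le 2$ on the head, and Lemma \ref{lem:technical lemma} combined with $\lVert X\rVert_2^2\le\lVert X\rVert_1\lVert X\rVert_\infty$ on the tail. Your split makes the origin of the $\sqrt{\log}$ gain completely transparent (it is the explicit tail integral $\int_{t_j^\ast}^\infty (e^t-1)^{-1/2}\,dt\le 2(1+\log^+(1/a_j))^{-1/2}$), at the price of the crossover bookkeeping; the paper's merged bound hides the mechanism in Lemma \ref{lem:integral_bound} but is stated once and reused verbatim in the von Neumann algebraic generalization of Section \ref{sec:generalization}. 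One point to repair when you write it up: for $a_j<1$ your head bound $\lesssim a_j/L$ with $L=\log(1/a_j)$ is a valid upper estimate, but it is \emph{not} ``even better than'' $a_j/(1+\log^+(1/a_j))^{1/2}$ when $a_j\uparrow 1$ (i.e. $L\to 0$), where it blows up while the target stays of order $a_j$; in that regime simply use the crude bound (head) $\le 4a_j\,t_j^\ast\le 4\log(3)\,a_j$, or treat $a_j\ge 1/2$ like the case $a_j\ge 1$. This falls under the constant-matching across $a_j=1$ that you already flagged, and with it the proof closes with a universal $C$.
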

\begin{proof}
Differentiating the function $t\mapsto \lVert P_t(A)\rVert_2^2$ one gets
\begin{align*}
\lVert A\rVert_2^2-\lVert P_T(A)\rVert_2^2=2\int_0^T \sum_{j=1}^n \lVert d_j P_t A\rVert_2^2\,dt=4\int_0^{T/2}\sum_{j=1}^n \lVert d_j P_{2t}A\rVert_2^2\,dt.
\end{align*}
By intertwining (Lemma \ref{lem:intertwining}) and hypercontractivity (Lemma \ref{lem:hypercontractivity}),
\begin{equation*}
\lVert d_j P_{2t}A\rVert_2=\lVert P_t d_j P_t A\rVert_2\leq \lVert d_j P_t A\rVert_{p(t)}
\end{equation*}
with $p(t)=1+e^{-2t}$. By H\"older's inequality,
\begin{equation*}
\lVert d_j P_t A\rVert_{p(t)}\leq \lVert d_j P_t A\rVert_1^{1/p(t)}\lVert d_j P_t A\rVert^{1-1/p(t)}.
\end{equation*}
For the term with the $L^1$-norm we use intertwining again and $L^1$-contractivity of $(P_t)_{t\ge 0}$ to get $\lVert d_j P_t A\rVert_1\leq \lVert d_j A\rVert_1$. For the term with $\|\cdot\|$-norm we use the bound derived from Lemma \ref{lem:technical lemma} below, which gives $\lVert d_j P_t A\rVert\leq (e^t-1)^{-1/2}$. Altogether,
\begin{equation*}
\lVert d_j P_{2t}A\rVert_2\le \lVert d_j P_t A\rVert_{p(t)}\leq (e^t-1)^{\frac{1-p(t)}{2p(t)}}\lVert d_j A\rVert_1^{\frac{1}{p(t)}}.
\end{equation*}
As a consequence,
\begin{equation}\label{ineq:middlestep}
\lVert A\rVert_2^2-\lVert P_T(A)\rVert_2^2\leq 4\sum_{j=1}^n \lVert d_j A\rVert_1 \int_0^{T/2}(e^t-1)^{\frac{1-p(t)}{p(t)}}\lVert d_j A\rVert_1^{\frac{2-p(t)}{p(t)}}\,dt.
\end{equation}
 Since $e^t-1\ge t$ and $p(t)\ge 1$, we have
$$(e^t-1)^{\frac{1-p(t)}{p(t)}}\le t^{\frac{1-p(t)}{p(t)}}\,.$$
Choosing $T=1$, we further show in Lemma \ref{lem:integral_bound} below that, given $a=\lVert d_j A\rVert_1$,
\begin{equation*}
\int_0^{1/2} t^{\frac{1-p(t)}{p(t)}} a^{\frac{2-p(t)}{p(t)}}\,dt\leq C\frac{1+a}{(1+\log^+(1/a))^{1/2}}\,,
\end{equation*}
for some universal constant $C>0$. We finish by combining \eqref{ineq:middlestep} and the bound (with $T=1$)
\begin{equation*}
\Var(A)\leq \frac 1{1-e^{-2T}}(\lVert A\rVert_2^2-\lVert P_T(A)\rVert_2^2)
\end{equation*}
derived from the Poincaré inequality (Lemma \ref{lem:l2poincare-quantum}).
\end{proof}

\begin{lemma}\label{lem:integral_bound}
	There exists a universal $C>0$ such that for $\alpha>0$, $a\ge 0$, $p(t)=1+e^{-2\alpha t}$ and $0\leq r\leq \min\{1,1/2\alpha\}$, we have
	\begin{equation*}
		\int_0^r t^{-(1-1/p(t))} a^{2/p(t)-1}\,dt\leq \frac{1}{\sqrt{\alpha}}\cdot\frac{C(1+a)}{(1+\log^+(1/a))^{1/2}}\, .
	\end{equation*}
\end{lemma}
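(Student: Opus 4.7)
The plan is to reduce the varying exponents to uniform ones using the two sides of the hypothesis $r\le\min\{1,1/(2\alpha)\}$, and then to split on the size of $a$. Since $p(t)=1+e^{-2\alpha t}\in[1,2]$, one has $1-1/p(t)\le 1/2$; combined with $r\le 1$ this yields the pointwise bound $t^{-(1-1/p(t))}\le t^{-1/2}$ on $(0,r]$. It therefore suffices to control $a^{2/p(t)-1}$ and integrate $t^{-1/2}$ against it.

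If $a\ge 1$, then $\log^+(1/a)=0$ and, since $2/p(t)-1\in[0,1]$, one has $a^{2/p(t)-1}\le a$. A direct integration gives $\int_0^r t^{-1/2}\,dt=2\sqrt{r}\le \sqrt{2/\alpha}$, so the left-hand side is at most $\sqrt{2}\,a/\sqrt{\alpha}\le\sqrt{2}(1+a)/\sqrt{\alpha}$, which is the desired inequality in this regime.

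For $a<1$, write $L:=\log(1/a)>0$. The key pointwise estimate I would invoke is the elementary inequality
\begin{equation*}
\frac{1-e^{-x}}{1+e^{-x}}\;\ge\;\frac{x}{4}\qquad\text{for } x\in[0,1],
\end{equation*}
which is immediate from $1-e^{-x}\ge x/2$ on $[0,1]$ together with $1+e^{-x}\le 2$. Since $2\alpha t\le 2\alpha r\le 1$ on the integration range, this gives
\begin{equation*}
a^{2/p(t)-1}\;=\;\exp\!\Big(-L\,\tfrac{1-e^{-2\alpha t}}{1+e^{-2\alpha t}}\Big)\;\le\;e^{-L\alpha t/2}.
\end{equation*}
Extending the upper limit to $+\infty$ and performing the resulting Gamma integral then yields $\int_0^r t^{-1/2}e^{-L\alpha t/2}\,dt\le \sqrt{2\pi/(L\alpha)}$. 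Comparing this with the trivial estimate $\int_0^r t^{-1/2}\,dt\le\sqrt{2/\alpha}$ and taking the better of the two --- the Gamma bound when $L\ge 1$, the trivial one when $L<1$ --- produces the uniform bound $2\sqrt{\pi}/\sqrt{\alpha(1+L)}$, using only that $1/L\le 2/(1+L)$ for $L\ge 1$ and $1\le 2/(1+L)$ for $L<1$.

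The only real difficulty is bookkeeping: both halves of $r\le\min\{1,1/(2\alpha)\}$ are used, one to replace the varying exponent of $t$ by $1/2$ and the other to make the estimate $(1-e^{-x})/(1+e^{-x})\ge x/4$ valid uniformly on $[0,r]$. Once both reductions are in place, no sharp computation is needed; the two regimes interpolate through a universal constant, which can be taken as $C=2\sqrt{\pi}$.
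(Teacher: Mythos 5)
Your proof is correct and follows essentially the same route as the paper: reduce the exponent of $t$ to $-1/2$ using $r\le 1$ and $p(t)\in[1,2]$, split on $a\ge 1$ versus $a<1$, and in the latter case use the pointwise bound $2/p(t)-1\ge \alpha t/2$ (your inequality $(1-e^{-x})/(1+e^{-x})\ge x/4$ on $[0,1]$ is exactly the paper's, proved by a marginally different elementary argument). The only difference is the endgame: the paper bounds the resulting integral $\sqrt{2/(\alpha x)}\int_0^{x/4}y^{-1/2}e^{-y}\,dy$ by a continuity-plus-limits boundedness argument, whereas you extend to a full Gamma integral and interpolate between the regimes $L\ge 1$ and $L<1$, which yields the explicit constant $C=2\sqrt{\pi}$ — a slightly cleaner but not essentially different argument.
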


\begin{proof}
		Note that $r\le 1$ and $1\le p(t)\le 2$, we have 
		\begin{equation*}
		\int_0^r t^{-(1-1/p(t))} a^{2/p(t)-1}\,dt\le \int_0^r t^{-1/2} a^{2/p(t)-1}\,dt \, .
		\end{equation*}
		Now we estimate the right hand side in two cases. If $a\geq 1$, then $a^{2/p(t)-1}\leq a$ and thus
		\begin{equation*}
		\int_0^{r} t^{-1/2} a^{2/p(t)-1}\,dt\leq a\int_0^{r} t^{-1/2}\,dt=2a\sqrt r\le  2(1+a)\,.
		\end{equation*}
		If $a<1$, then $a^{2/p(t)-1}\leq a^{\alpha t/2}$ for $0\le t\le r$. In fact, for $t\in [0,r]\subset [0, 1/2\alpha]$, we have
		\begin{equation*}
		\frac{2}{p(t)}-1=\frac{e^{2\alpha t}-1}{e^{2\alpha t}+1}\ge \frac{\alpha t}{2} \, ,
		\end{equation*}
		which is equivalent to 
		\begin{equation*}
		\varphi(\beta):=4(e^{\beta}-1)-\beta(e^\beta +1)\ge 0\, ,
		\end{equation*}
		for $\beta\in [0,2\alpha r]\subset [0,1]$. This is clear as $\varphi(0)=0$ and $\varphi'(\beta)=(3-\beta)e^\beta-1\ge 0$ for $\beta\in [0,1]$.
		Hence
		\begin{equation*}
		\int_0^{r} t^{-1/2} a^{2/p(t)-1}\,dt\leq \int_0^{1/2\alpha} t^{-1/2} a^{\alpha t/2}\,dt
		=\sqrt{\frac{2}{\alpha x}}\int_{0}^{x/4}y^{-1/2}e^{-y}dy\, ,
		\end{equation*}
		where $x=\log(1/a)>0$. It remains to show that there exists a universal constant $C>0$ such that for all $x>0$,
		\begin{equation}\label{ineq:an integral ineq}
		\sqrt{\frac{2}{x}}\int_{0}^{x/4}y^{-1/2}e^{-y}dy\leq \frac{C}{(1+x)^{1/2}}\, .
		\end{equation}
		In fact, the function 
		$$f(x):=\sqrt{\frac{2(1+x)}{x}}\int_{0}^{x/4}y^{-1/2}e^{-y}dy\, ,$$
		is continuous on $(0,\infty)$, and we have 
		$$\lim\limits_{x\to +\infty}f(x)=\sqrt{2}\int_{0}^{\infty}y^{-1/2}e^{-y}dy=\sqrt{2\pi}\, ,$$
		and by L'Hôpital's rule
		$$\lim\limits_{x\to 0^+}f(x)=\lim\limits_{x\to 0^+}\sqrt{8}x^{1/2}(1+x)^{3/2}\cdot \left(\frac{x}{4}\right)^{-1/2}e^{-x/4}=4\sqrt{2}\, .$$
		Then \eqref{ineq:an integral ineq} follows and the proof is complete. 
	\end{proof}

It remains to prove the following technical lemma:
\begin{lemma}\label{lem:technical lemma}
	Let $n\ge 1$ and $(P_t)_{t\ge 0}$ be the quantum depolarizing semigroup on $n$-qubits defined in \eqref{eq:defn of semigroup}. Then for all $t> 0$ and all $A\in M_2(\com)^{\otimes n}$ we have
	\begin{equation}\label{ineq:estimates of djPt}
		\sum_{j=1}^{n} (d_j P_t A)^\ast d_j P_t A\leq \frac{\lVert A\rVert^2}{e^t-1}\un\,.
	\end{equation}
	In particular, for each $1\le j\le n$, 
	\begin{equation*}
		(d_j P_t A)^\ast d_j P_t A\leq \frac{\lVert A\rVert^2}{e^t-1}\un \qquad \text{ and }\qquad\|d_j P_t A\|\le \frac{\|A\|}{\sqrt{e^t-1}}\,.
	\end{equation*}
	
\end{lemma}

\begin{proof}

	By definition of $(P_t)_{t\ge 0}$:
	\begin{align*}
		\lVert A\rVert^2 \un
		&\geq P_t(A^\ast A)\\
		&\geq P_t(A^\ast A)-P_t(A)^\ast P_t(A)\\
		&\overset{(1)}{=}2\int_0^t P_s\Gamma(P_{t-s}A)\,ds\\
		&		\overset{(2)}{\ge}  2\int_0^t e^s ds \cdot \Gamma(P_t A)\\
		&	=2(e^t-1)\Gamma(P_t A)\,,
	\end{align*}
	where $(1)$ follows from  the differentiation of $s\mapsto P_s(P_{t-s}(A^*)P_{t-s}(A))$, whereas $(2)$ follows from gradient estimates Lemma \ref{gradient}. Now we claim that for all $A$ and for each $1\le j\le n$ we have 
	\begin{equation}\label{ineq:claim d_j}
		\d_j(A)^\ast A+A^\ast \d_j(A)-	\d_j(A^\ast A)\ge \d_j (A)^\ast\d_j (A)\, ,
	\end{equation}
	and thus 
	\begin{equation}\label{ineq:claim_bis}
		2\Gamma(A)\ge \sum_{j=1}^{n}\d_j (A)^\ast\d_j (A)\, .
	\end{equation}
	Let us first finish the proof of the lemma given \eqref{ineq:claim_bis}. Applying \eqref{ineq:claim_bis} to $P_t A$, we may proceed with the previous estimate as 

	\begin{align*}
		\lVert A\rVert^2 \un
		\ge 2(e^t-1)\Gamma(P_t A)
		\ge (e^t-1)\sum_{j=1}^n (d_j P_t A )^\ast d_j P_t A\,,
	\end{align*}
	which proves \eqref{ineq:estimates of djPt}. 
	
	Now it remains to show \eqref{ineq:claim d_j}. For this we decompose $A=\sum_{k}A^k_1\otimes \cdots \otimes A^k_n$ into a sum of tensor products, where $k$ in $A^k_j=A^{(k)}_j$ is just an index, then 
	\begin{align*}
		&\d_j(A)^\ast A+A^\ast \d_j(A)-\d_j(A^\ast A)\\
		=&\sum_{k,l}(A^{k}_1)^\ast A^l_1\otimes \cdots \otimes \left[\mathcal L_0(A_j^k)^\ast A_j^l+(A_j^k)^\ast \mathcal L_0(A_j^l)-\mathcal L_0((A_j^k)^\ast A_j^l)\right]
		\otimes \cdots \otimes (A^{k}_n)^\ast A^l_n
	\end{align*}
	and 
	\begin{equation*}
		\d_j (A)^\ast\d_j (A)
		=\sum_{k,l}(A^{k}_1)^\ast A^l_1\otimes \cdots \otimes [\mathcal L_0(A_j^k)^\ast \mathcal L_0(A_j^l)]
		\otimes \cdots \otimes (A^{k}_n)^\ast A^l_n.
	\end{equation*}
	For any $X,Y\in M_2(\com)$, a direct computation shows
	\begin{equation*}
		\mathcal L_0(X)^\ast Y+X^\ast \mathcal L_0(Y)-	\mathcal L_0(X^\ast Y)
		=X^\ast Y+\frac{1}{2}\tr(X^\ast Y)-\frac{1}{2}\tr(X^\ast) Y-\frac{1}{2}\tr(Y)X^\ast,
	\end{equation*}
	and 
	\begin{equation*}
		\mathcal L_0(X)^\ast \mathcal L_0(Y)
		=X^\ast Y+\frac{1}{4}\tr(X^\ast)\tr(Y)-\frac{1}{2}\tr(X^\ast) Y-\frac{1}{2}\tr(Y)X^\ast.
	\end{equation*}
	Thus 
	\begin{equation*}
		\mathcal L_0(X)^\ast Y+X^\ast \mathcal L_0(Y)
		-	\mathcal L_0(X^\ast Y)-\mathcal L_0(X)^\ast \mathcal L_0(Y)
		=\frac{1}{2}\tr(X^\ast Y)-\frac{1}{4}\tr(X^\ast)\tr(Y)\,.
	\end{equation*}
	So \eqref{ineq:claim d_j} is equivalent to 
	\begin{equation*}
		\sum_{k,l} (A^{k}_1)^\ast A^l_1\otimes \cdots \otimes \left[\frac{1}{2}\tr((A_j^k)^\ast A^l_j)-\frac{1}{4}\tr((A_j^k)^\ast)\tr(A^l_j) \right]
		\otimes \cdots\otimes  (A^{k}_n)^\ast A^l_n\ge 0\,,
	\end{equation*}
	which can be reformulated as 
	\begin{equation}\label{ineq:KS}
		T_j(A^\ast A)\ge T_j(A)^\ast T_j(A)\,,
	\end{equation}
	with $T_j:=\mathbb I ^{\otimes (j-1)}\otimes \frac{1}{2}\tr \otimes \mathbb I ^{\otimes (n-j)}$. Now \eqref{ineq:KS} follows from the Kadison--Schwarz inequality \cite[Chapter 5.2]{wolf2012quantum} and that $\frac{1}{2}\tr$ is unital completely positive (over $M_2(\com)$). This finishes the proof of the claim \eqref{ineq:claim d_j} and thus the proof of the lemma.
\end{proof}

\begin{remark}
	Following the argument in \cite[Proof of Theorem 1]{cel12talagrand}, one can prove a quantum analogue of \eqref{ineq:talagrand} using similar properties of quantum depolarizing semigroups. In fact, the proof of \eqref{ineq:talagrand} does not even require strictly positive Ricci curvature lower bounds, i.e. Lemma \ref{gradient} can be weakened. We will not discuss it here as \eqref{ineq:talagrand} is not our main focus and a quantum analogue was already obtained in \cite{mo10quantumboolean}.
\end{remark}

The quantum Talagrand inequality Theorem \ref{quantumTalagrand} implies a quantum KKL for $L^1$ influences (following a similar argument in Lemma \ref{lem:auxiliary_KKL} below): for balanced quantum Boolean $A$ on $n$-qubits,
	\begin{equation*}
	\max_{1\le j\le n}\operatorname{Inf}^1_j (A)\geq C\frac{\sqrt{\log(n)}}{n}\ .
\end{equation*}
Recall that for classical Boolean functions the sharp order is $\log(n)/n$, which can be captured by tribes functions \cite[Chapter 4]{odonnell_2014}. In fact, the order $\log(n)/n$ is also sharp for quantum Boolean functions, which can be seen from the following improved version of quantum Talagrand Theorem \ref{quantumTalagrand}:

\begin{theorem}\label{thm:quantum Talagrand improved}
		For every $p\in [1,2)$ there exists a constant $C_p>0$ such that for every $n\in\mathbb N$ and $A\in M_2(\mathbb C)^{\otimes n}$ with $\lVert A\rVert\leq 1$ one has
		\begin{equation*}
			\mathrm{Var}(A)\leq C_p\sum_{j=1}^n \frac{\lVert d_j(A)\rVert_p^p(1+\lVert d_j(A)\rVert_p^p)}{1+\log^+(1/\lVert d_j(A)\rVert_p^p)},
		\end{equation*}
		where the constant can be chosen of order $C_p\sim C/(2-p)$ as $p\nearrow 2$. In particular, for $p=1$:
			\begin{equation*}
			\mathrm{Var}(A)\leq C\sum_{j=1}^n \frac{\lVert d_j(A)\rVert_1(1+\lVert d_j(A)\rVert_1)}{1+\log^+(1/\lVert d_j(A)\rVert_1)}.
		\end{equation*}
	\end{theorem}
	\begin{proof}
		Let $T>0$ be such that $p\leq 1+e^{-2T}$. By the Poincaré inequality we have
		\begin{equation*}
			\mathrm{Var}(A)\leq \frac 1{1-e^{-2T}}\left[\lVert A\rVert_2^2-\lVert P_T(A)\rVert_2^2\right]=\frac{2}{1-e^{-2T}}\int_0^T\sum_{j=1}^n \lVert d_j(P_t(A))\rVert_2^2\,dt
		\end{equation*}
		By intertwining and hypercontractivity,
		\begin{equation*}
			\frac{2}{1-e^{-2T}}\int_0^T\sum_{j=1}^n \lVert d_j(P_t(A))\rVert_2^2\,dt\leq \frac{2}{1-e^{-2T}}\int_0^T\sum_{j=1}^n \lVert d_j(A)\rVert_{p(t)}^2\,dt
		\end{equation*}
		with $p(t)=1+e^{-2t}$.
		
		By interpolation and $\lVert d_jA\rVert\leq 2\lVert A\rVert\le 2$,
		\begin{align*}
			\frac{2}{1-e^{-2T}}\int_0^T\sum_{j=1}^n \lVert d_j(A)\rVert_{p(t)}^2\,dt&\leq \frac{2}{1-e^{-2T}}\int_0^T\sum_{j=1}^n \lVert d_j(A)\rVert_{p}^{2p/p(t)}\lVert d_j(A)\rVert^{2(1-p/p(t))}\,dt\\
			&\leq \frac{8}{1-e^{-2T}}\sum_{j=1}^n \lVert d_j(A)\rVert_p^p\int_0^T\lVert d_j(A)\rVert_p^{p(2/p(t)-1)}\,dt.
		\end{align*}
		If $\lVert d_j(A)\rVert_p^p\geq 1$, then $\lVert d_j(A)\rVert_p^{p(2/p(t)-1)}\leq \lVert d_j(A)\rVert_p^p$, so that
		\begin{align*}
			\frac{8}{1-e^{-2T}} \lVert d_j(A)\rVert_p^p\int_0^T\lVert d_j(A)\rVert_p^{p(2/p(t)-1)}\,dt
			\leq &\frac{8T}{1-e^{-2T}} \lVert d_j(A)\rVert_p^{2p}\\
			\le &\frac{8T}{1-e^{-2T}} \lVert d_j(A)\rVert_p^{p}(1+\lVert d_j(A)\rVert_p^{p}).
		\end{align*}
		If $\lVert d_j(A)\rVert_p^p< 1$, then $\lVert d_j(A)\rVert_p^{p(2/p(t)-1)}\leq \lVert d_j(A)\rVert_p^{pt/2}$  (recall that $p(2/p(t)-1)\ge pt/2$ by the proof of Lemma \ref{lem:integral_bound}) and thus
		\begin{align*}
			\frac{8}{1-e^{-2T}} \lVert d_j(A)\rVert_p^p\int_0^T\lVert d_j(A)\rVert_p^{p(2/p(t)-1)}\,dt&\leq\frac{8}{1-e^{-2T}} \lVert d_j(A)\rVert_p^p\int_0^T\lVert d_j(A)\rVert_p^{pt/2}\,dt\\
			&=\frac{16}{1-e^{-2T}} \lVert d_j(A)\rVert_p^p\frac{1-\lVert d_j(A)\rVert_p^{pT/2}}{\log 1/\lVert d_j(A)\rVert_p^p}.
		\end{align*}
We claim that 
\begin{equation}\label{ineq:claim}
\frac{1-a^{-T/2}}{\log a}\le \max\left\{\frac{3T}{2},2\right\}\frac{1+a^{-1}}{1+\log a},\qquad a\ge 1.
\end{equation}		
Applying \eqref{ineq:claim} to $a=\|d_j (A)\|_p^{-p}$, we obtain
\begin{equation*}
\frac{16}{1-e^{-2T}} \lVert d_j(A)\rVert_p^p\frac{1-\lVert d_j(A)\rVert_p^{pT/2}}{\log 1/\lVert d_j(A)\rVert_p^p}
\le \frac{16\max\left\{\frac{3T}{2},2\right\}}{1-e^{-2T}} \frac{\lVert d_j(A)\rVert_p^p(1+\lVert d_j(A)\rVert_p^p)}{1+\log^+(1/\lVert d_j(A)\rVert_p^p)}.
\end{equation*}
		 Now let us prove the claim \eqref{ineq:claim} which we divide into two cases. When $a\in [1,e]$, we have 
		\begin{equation}\label{ineq:claim case 1}
		\frac{1-a^{-T/2}}{\log a}\le \frac{T}{2},\qquad 
		\frac{1+a^{-1}}{1+\log a}\ge \frac{1}{3},
		\end{equation}
		which are nothing but 
		\begin{equation*}
		f_1(a):=\frac{T}{2}\log a-1+a^{-T/2}\ge 0,\qquad 
		f_2(a):=3a+3-(a+a\log a)\ge 0.
		\end{equation*}
		A direct computation shows that when $a\in [1,e]$
		$$
		f_1'(a)=\frac{T}{2}a^{-1-\frac{T}{2}}(a^{\frac{T}{2}}-1)\ge 0, \qquad \textnormal{thus}\qquad f_1(a)\ge  f_1(1)=0,
		$$
		and 
		$$
		f_2'(a)=1-\log a\ge 0,\qquad \textnormal{thus}\qquad f_2(a)\ge f_2(1)=5.
		$$
		This proves \eqref{ineq:claim case 1} and thus the claim when $a\in [1,e]$. When $a\ge e$, we have $2\log a\ge 1+\log a$, and 
		\begin{equation}
	\frac{1-a^{-T/2}}{\log a}\le \frac{2}{1+\log a}\le \frac{2(1+a^{-1})}{1+\log a},
\end{equation}				
which proves the claim for $a\in [1,e]$.
		
Noting that $8T\le 16\max\left\{\frac{3T}{2},2\right\}$, we thus just proved for all $e^{-2T}\ge p-1$, one has
	\begin{equation*}
	\mathrm{Var}(A)\leq \frac{16\max\left\{\frac{3T}{2},2\right\}}{1-e^{-2T}}\sum_{j=1}^n \frac{\lVert d_j(A)\rVert_p^p(1+\lVert d_j(A)\rVert_p^p)}{1+\log^+(1/\lVert d_j(A)\rVert_p^p)}.
	\end{equation*}
	
Choosing $T=-\frac 1 2 \log(p-1)$, the above inequality becomes 
\begin{equation*}
	\mathrm{Var}(A)\leq C_p \sum_{j=1}^n \frac{\lVert d_j(A)\rVert_p^p(1+\lVert d_j(A)\rVert_p^p)}{1+\log^+(1/\lVert d_j(A)\rVert_p^p)},
	\end{equation*}
with $C_p=\frac{16\max\{-\frac{3}{4}\log(p-1),2\}}{2-p}$ which is of the order $32/(2-p)$ as $p\nearrow 2$.
	\end{proof}

\begin{remark}
The above Theorem \ref{thm:quantum Talagrand improved} (when $p=1$) improves Theorem \ref{quantumTalagrand}, since it gives the right order of quantum KKL as we shall see in the next. However, Theorem \ref{quantumTalagrand} can be easily extended to more general von Neumann algebras, which will be discussed in Theorem \ref{thm:general_Talagrand}. The generalization of Theorem \ref{thm:quantum Talagrand improved} is also possible but requires additional assumption which we will not discuss in the general von Neumann algebra setting. 
\end{remark}

\subsection{A KKL theorem for quantum Boolean functions}

Our quantum KKL theorem for geometric ($L^1$-)influences follows as a simple corollary of Theorem \ref{thm:quantum Talagrand improved}. First we need an elementary lemma.

\begin{lemma}\label{lem:auxiliary_KKL}
If $n\in\mathbb N$, $a_1,\dots,a_n\ge 0$ and $c>0$ such that
\begin{equation*}
\sum_{j=1}^n \frac{a_j(1+a_j)}{1+\log^+(1/a_j)}\geq c\ ,
\end{equation*}
then
\begin{equation*}
\max_{1\leq j\leq n}a_j\geq \min\left\{\frac{c}{4},1\right\}\frac{\log n}{n}\ .
\end{equation*}
\end{lemma}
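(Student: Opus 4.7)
Set $M := \max_{1\le j\le n} a_j$, and let $K := \min\{c/(2\sqrt 2),1\}$. If $M=0$ then the hypothesis fails (since $c>0$), so assume $M>0$. I would split the argument into the easy case $M\ge 1$ and the main case $M<1$.

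In the easy case $M\ge 1$, I just note that $\sqrt{\log n}/n\le 1$ for every $n\ge 1$, and therefore $M\ge 1\ge K\sqrt{\log n}/n$, which is the desired bound. So from now on assume $0<M<1$, which implies $a_j<1$ for all $j$ and in particular $\log^+(1/a_j)=\log(1/a_j)$.

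In the main case, I would introduce the auxiliary function
\[
\phi(a):=\frac{a(1+a)}{(1+\log(1/a))^{1/2}},\qquad 0<a\le 1,
\]
and observe that $\phi$ is nondecreasing on $(0,1]$, because both factors $a(1+a)$ and $(1+\log(1/a))^{-1/2}$ are nondecreasing in $a$ on that interval. Using this monotonicity on each summand and the bound $1+M\le 2$, the hypothesis becomes
\[
c\le \sum_{j=1}^n \phi(a_j)\le n\,\phi(M)=\frac{nM(1+M)}{(1+\log(1/M))^{1/2}}\le \frac{2nM}{(1+\log(1/M))^{1/2}},
\]
so that $M\ge \dfrac{c}{2n}\bigl(1+\log(1/M)\bigr)^{1/2}$.

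The remaining step is to turn this implicit inequality into the explicit bound $M\ge K\sqrt{\log n}/n$. If already $M\ge \sqrt{\log n}/n$ there is nothing to prove (since $K\le 1$), so suppose $M<\sqrt{\log n}/n$. Then $\log(1/M)>\log n-\tfrac12\log\log n$, and I would show the elementary estimate
\[
1+\log(1/M)\;\ge\;\tfrac12\log n,
\]
which is equivalent to $2+\log n\ge \log\log n$, plainly true for all $n\ge 1$ where both sides are defined (and trivially so when $\log\log n\le 0$). Plugging this into the implicit inequality gives
\[
M\;\ge\;\frac{c}{2n}\sqrt{\tfrac{\log n}{2}}\;=\;\frac{c}{2\sqrt 2}\cdot\frac{\sqrt{\log n}}{n},
\]
completing the proof. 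The only nontrivial ingredient is the monotonicity of $\phi$ on $(0,1]$, which handles the subtlety that the summands are neither concave nor convex in a uniform way; everything else is bookkeeping with logarithms.
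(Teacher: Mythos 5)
Your proof is correct and follows essentially the same route as the paper's: a case split on the size of $\max_j a_j$, followed by the two elementary bounds $1+a_j\le 2$ and $1+\log(1/a_j)\gtrsim \tfrac12\log n$ in the small-maximum regime, then $\sum_j a_j\le n\max_j a_j$. The paper simply uses the single threshold $\max_j a_j\ge 1/\sqrt n$ (so that $\log(1/a_j)>\tfrac12\log n$ termwise), which avoids your monotonicity lemma for $\phi$ and the $\log\log n$ bookkeeping, but the substance is the same.
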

\begin{proof}
If $\max_{1\leq j\leq n}a_j\geq 1/\sqrt n$, we are done, so we can assume $a_j<1/\sqrt n\le 1$ for all $j\in \{1,\dots,n\}$. Then we have
\begin{align*}
c&\leq \sum_{j=1}^n \frac{2a_j}{1+\log (1/a_j)}\\
&\leq \frac  {2}{1+\frac 1 2 \log n}\sum_{j=1}^n a_j\\
&\leq \frac {4 n}{\log n}\max_{1\leq j\leq n} a_j\ .\qedhere
\end{align*}
\end{proof}

\begin{theorem}\label{thm:quantum kkl}
	For every $1\le p<2$, there exists a constant $C_p>0$ such that for any $n\ge 1$ and any balanced quantum Boolean function $A\in M_2(\mathbb{C})^{\otimes n}$
	\begin{equation*}
	\max_{1\le j\le n}\operatorname{Inf}^p_j (A)\geq C_p\frac{\log(n)}{n}\ .
	\end{equation*}
\end{theorem}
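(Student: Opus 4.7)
The plan is to derive Theorem \ref{thm:quantum kkl} as a direct consequence of the quantum $L^1$-Talagrand inequality (Theorem \ref{quantumTalagrand}) together with the elementary Lemma \ref{lem:auxiliary_KKL}. The strategy mirrors exactly how the classical KKL theorem follows from Talagrand's variance inequality, except that here the geometric ($L^1$) influences already appear on the right-hand side.

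First I would verify that a balanced quantum Boolean function $A$ satisfies the hypotheses of Theorem \ref{quantumTalagrand} and that $\Var(A)$ is of order $1$. Since $A = A^*$ and $A^2 = \mathbf{1}$, the spectrum of $A$ is contained in $\{-1, +1\}$, so $\|A\| = 1$. Moreover,
\begin{equation*}
\|A\|_2^2 = \frac{1}{2^n}\tr(A^* A) = \frac{1}{2^n}\tr(\mathbf{1}) = 1,
\end{equation*}
and since $\tr(A) = 0$ by the balanced assumption, $\Var(A) = \|A - 2^{-n}\tr(A)\|_2^2 = \|A\|_2^2 = 1$.

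Next I would apply Theorem \ref{quantumTalagrand} to $A$, yielding
\begin{equation*}
1 = \Var(A) \leq C \sum_{j=1}^n \frac{\|d_j A\|_1 (1 + \|d_j A\|_1)}{(1 + \log^+(1/\|d_j A\|_1))^{1/2}},
\end{equation*}
where $C > 0$ is the universal constant from Theorem \ref{quantumTalagrand}. Setting $a_j := \|d_j A\|_1 = \operatorname{Inf}^1_j(A)$, this says exactly that the hypothesis of Lemma \ref{lem:auxiliary_KKL} holds with $c = 1/C$. Invoking that lemma then produces
\begin{equation*}
\max_{1 \leq j \leq n} \operatorname{Inf}^1_j(A) \geq \min\!\left\{\frac{1}{2C\sqrt{2}},\, 1\right\} \frac{\sqrt{\log n}}{n},
\end{equation*}
which is the claimed bound with the universal constant $C' := \min\{1/(2C\sqrt{2}), 1\}$.

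There is no real obstacle in this argument; all the heavy lifting has already been done in proving the $L^1$-Talagrand inequality (which required hypercontractivity, intertwining, and the gradient estimate of Lemma \ref{lem:technical lemma}). The only substantive remark worth making is why the classical KKL conclusion $\Omega(\log n / n)$ becomes $\Omega(\sqrt{\log n}/n)$ here: in the classical Boolean setting one has $\|D_j f\|_1 = \|D_j f\|_2^2$, which lets one pay only a single logarithmic factor, whereas in the quantum case this identity fails and the exponent $1/2$ on the logarithm in \eqref{eq:L1Talagrand} is genuinely needed, leading to $\sqrt{\log n}$ instead of $\log n$ in the final bound.
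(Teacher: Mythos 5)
Your proposal is correct and follows exactly the paper's own route: note $\Var(A)=1$ and $\|A\|\le 1$ for a balanced quantum Boolean function, apply Theorem \ref{quantumTalagrand}, and conclude via Lemma \ref{lem:auxiliary_KKL}. You merely spell out a few details (the spectral argument for $\|A\|=1$, the choice $c=1/C$) that the paper leaves implicit.
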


\begin{proof}
Since $\Var(A)=1$ for any balanced quantum Boolean function, the result follows from Theorem \ref{thm:quantum Talagrand improved} with the help of Lemma \ref{lem:auxiliary_KKL}.
	\end{proof}

All combined, we have shown that every balanced quantum Boolean function has a geometrically influential variable. In fact, suppose that $A\in M_2(\com)^{\otimes n}$ is a balanced quantum Boolean function, then
$$\|A-2^{-n}\tr(A)\|_1=\|A\|_1=\|A\|_2^2=\Var(A)=1.$$
According to $L^1$-Poincar\'e inequality \eqref{ineq:l1poincare}, 
$$ \sum_{j=1}^{n}\|d_j A\|_1\ge 1\ .$$
One may wonder if $\operatorname{Inf}_j^1(A)=\|d_j A\|_1\approx 1/n$ for all $1\le j\le n$ is possible. However, our Theorem \ref{thm:quantum kkl} for $p=1$ indicates that this is not the case. There exists $j$ such that $\operatorname{Inf}^1_j (A)\ge C\log(n)/n$ for some $C>0$.

\begin{remark}
In \cite[Conjecture 3 of Section 12]{mo10quantumboolean}, the authors have conjectured a similar KKL-type result for the quantum $L^2$-influences $\operatorname{Inf}_j^2(A)$. While this influence coincides with the $L^1$-influence $\operatorname{Inf}^1_j(A)$ when $A$ is a classical Boolean function, this is not the case in the quantum setting. Hence, this conjecture in \cite{mo10quantumboolean} remains open to the best of our knowledge.

	\end{remark}

\subsection{A Friedgut's Junta theorem for quantum Boolean functions}

We recall that a Boolean function $g:\Omega_n\to \{-1,1\}$ is called a $k$-junta if it only depends on a set of at most $k<n$ bits. In \cite{friedgut1998boolean}, Friedgut showed that for any Boolean function $f:\{-1,1\}^n\to \{-1,1\}$ and $\epsilon\in(0,1)$, $f$ is $\epsilon$-close in $2$-norm to a $2^{\mathcal{O}(\Inf^2 f/\epsilon)}$-junta, where 
\begin{align}
	\Inf^2 f=\sum_{j=1}^n\Inf^2_j f
\end{align}
denotes the total $L^2$-influence of $f$, with $\Inf_j^2(f):=\|D_j f\|_2^2$. More recently, Bouyrie \cite{bouyrie2017unified} proved an $L^1$ version of Friedgut's junta theorem, more adapted to continuous models, based on the proof techniques developed in \cite{cel12talagrand} (see also \cite{austin2016failure} for a previous account of the result upon which the proof of \cite{bouyrie2017unified} relies). The next theorem constitutes a quantum generalization of the $L^1$ Friedgut's Junta theorem; see Corollary \ref{coro:quantbooleanjunta} followed. Recall that we define $k$-juntas for operators that are not necessarily Boolean.

\begin{theorem}\label{thm:L1friedgut}
For any $A\in M_2(\mathbb C)^{\otimes n}$ and any $\epsilon>0$ small enough, there exists a $k$-junta $B\in M_2(\mathbb C)^{\otimes n}$ with $\lVert A-B\rVert_2\leq \epsilon$ and
\begin{equation*}
k\leq 2^{\frac{30\Inf^2(A)}{\epsilon^2}}\frac{\lVert A\rVert_2^4\Inf^1(A)^6}{\Inf^2(A)^5}.
\end{equation*}
Moreover, $B$ can be taken to be $2^{-\lvert T\rvert}\tr_T(A)$ for some set $T\subset \{1,\dots,n\}$ of $n-k$ qubits.
\end{theorem}
\begin{proof}
Let $d=\frac{2\Inf^2(A)}{\epsilon^2}$. If $d\leq 1$, then 
\begin{equation}\label{poincareivan}
\lVert A-2^{-\lvert T\rvert}\tr_T(A)\rVert_2^2\leq \sum_{j\in T}\lVert d_j(A)\rVert_2^2\leq \Inf^2(A)\leq \frac{\epsilon^2}{2}
\end{equation}
for any subset $T$ of $\{1,\dots,n\}$ by the (non-primitive) Poincaré inequality for the tensor product of depolarizing semigroups restricted to the subset $T$ (see \cite[Example 3.1]{bardet2017estimating}).

Let us now consider the case $d>1$. Let 
\begin{equation*}
T=\left\{j\in \{1,\dots,n\}\,\bigg\vert\, \Inf^1_j(A)\leq \frac{\Inf^2(A)^5}{\Inf^1(A)^5}\frac{2^{-15d}}{\lVert A\rVert_2^4}\right\}
\end{equation*}
and $B=2^{-\lvert T\rvert}\tr_T(A)$. The matrix $A-B$ has the Fourier decomposition
\begin{equation*}
A-B=\sum_{\substack{s\in \{0,1,2,3\}^n\\s|_T\neq 0}}\widehat A_s \sigma_s.
\end{equation*}
By Plancherel's identity,
\begin{align*}
\lVert A-B\rVert_2^2=\sum_{s|_T\neq 0}\lvert \widehat A_s\rvert^2=\sum_{\substack{s|_T\neq 0\\\lvert \supp(s)\rvert>d}}\lvert \widehat A_s\rvert^2+\sum_{\substack{s|_T\neq 0\\\lvert \supp(s)\rvert\leq d}}\lvert \widehat A_s\rvert^2.
\end{align*}
Let us treat both summands on the right side separately. For the first summand,
\begin{equation*}
\sum_{\substack{s|_T\neq 0\\\lvert \supp(s)\rvert>d}}\lvert \widehat A_s\rvert^2\leq \frac 1 d\sum_{\substack{s|_T\neq 0\\\lvert \supp(s)\rvert>d}}\lvert \supp(s)\rvert\lvert \widehat A_s\rvert^2\leq \frac 1 d\Inf^2(A)= \frac{\epsilon^2} 2,
\end{equation*}
where we used formula (\ref{eq:influence_fourier_basis}) for $\Inf^2$.

For the second summand,
\begin{align*}
\sum_{\substack{s|_T\neq 0\\\lvert \supp(s)\rvert\leq d}}\lvert \widehat A_s\rvert^2&\leq e^{2dt}\sum_{s|_T\neq 0}e^{-2t\lvert\supp(s)\rvert}\lvert \widehat A_s\rvert^2\\
&\leq e^{2dt}\sum_{j\in T}\sum_{\substack{s\in \{0,1,2,3\}^n\\s_j\neq 0}}e^{-2t\lvert \supp(s)\rvert}\lvert \widehat A_s\rvert^2\\
&=e^{2dt}\sum_{j\in T}\Inf^2_j(P_t(A))
\end{align*}
for any $t\geq 0$. Here we used (\ref{eq:QMS_fourier_basis}) for the depolarizing semigroup.

Now take $t=\log 2$. By intertwining (Lemma \ref{lem:intertwining}), hypercontractivity (Lemma \ref{lem:hypercontractivity}) and interpolation,
\begin{align*}
\Inf^2_j(P_{\log 2}(A))&=\lVert d_j(P_{\log 2}(A))\rvert_2^2\\
&=\lVert P_{\log 2}(d_j(A))\rVert_2^2\\
&\leq \lVert d_j(A)\rVert_{5/4}^2\\
&\leq \lVert d_j(A)\rVert_1^{6/5}\lVert d_j(A)\rVert_2^{4/5}
\end{align*}
Since $d_j$ is a projection, $\lVert d_j(A)\rVert_2\leq \lVert A\rVert_2$. Moreover, by definition, for $j\in T$
\begin{align*}
\lVert d_j(A)\rVert_1^{6/5}=\Inf_j^1(A)^{6/5}\leq \frac{\Inf^2(A)}{\Inf^1(A)}\frac{2^{-3d}}{\lVert A\rVert_2^{4/5}}\Inf^1_j(A).
\end{align*}
Therefore,
\begin{align*}
\sum_{\substack{s|_T\neq 0\\\lvert \supp(s)\rvert\leq d}}\lvert \widehat A_s\rvert^2&\leq 4^d\sum_{j\in T}\lVert d_j(A)\rVert_1^{6/5}\lVert d_j(A)\rVert_2^{4/5}\\
&\leq 2^{-d} \frac{\Inf^2(A)}{\Inf^1(A)}\sum_{j\in T}\Inf_j^1(A)\\
&\leq 4^{-\frac{\Inf^2(A)}{\epsilon^2}}\Inf^2(A)\\
&\leq e^{-\frac{\Inf^2(A)}{\epsilon^2}}\Inf^2(A)\\
&\leq \frac{\epsilon^2}{2},
\end{align*}
where we used the elementary inequality $x\leq e^{x/2}$ for $x\geq 0$ in the last step.

Altogether we have shown that $\lVert A-B\rVert_2^2\leq \epsilon^2$. Moreover, $B$ is a $k$-junta with $k=\lvert T^c\rvert$. Since 
\begin{equation*}
\Inf^1_j(A)\geq \frac{\Inf^2(A)^5}{\Inf^1(A)^5}\frac{2^{-15d}}{\lVert A\rVert_2^4}
\end{equation*}
for every $j\in T^c$, we have
\begin{equation*}
\Inf^1(A)\geq \sum_{j\in T^c}\Inf_j^1(A)\geq \lvert T^c\rvert \frac{\Inf^2(A)^5}{\Inf^1(A)^5}\frac{2^{-15d}}{\lVert A\rVert_2^4}.
\end{equation*}
Hence
\begin{equation*}
k=\lvert T^c\rvert\leq 2^{15 d}\frac{\lVert A\rVert_2^4\Inf^1(A)^6}{\Inf^2(A)^5}=2^{\frac{30\Inf^2(A)}{\epsilon^2}}\frac{\lVert A\rVert_2^4\Inf^1(A)^6}{\Inf^2(A)^5}.\qedhere
\end{equation*}
\end{proof}

In the next corollary we restrict ourselves to quantum Boolean functions.

\begin{corollary}\label{coro:quantbooleanjunta}
For any quantum Boolean $A\in M_2(\mathbb C)^{\otimes n}$ and any $\epsilon>0$ small enough there exists a quantum Boolean $k$-junta $C\in M_2(\mathbb C)^{\otimes n}$ with $\lVert A-C\rVert_2\leq \epsilon$ and
\begin{equation*}
k\leq 2^{\frac{270\Inf^2(A)}{\epsilon^2}}\frac{\Inf^1(A)^6}{\Inf^2(A)^5}.
\end{equation*}
\end{corollary}
\begin{proof}
By Theorem \ref{thm:L1friedgut} there exists a self-adjoint $k$-junta $B\in M_2(\mathbb C)^{\otimes n}$ such that $\lVert A-B\rVert_2\leq \epsilon$ and $k\leq 2^{\frac{30\Inf^2(A)}{\epsilon^2}}\Inf^1(A)^6/\Inf^2(A)^5$. Let us now define $C:=\operatorname{sgn}(B)$ as follows: Given the spectral decomposition $B=\sum_i\lambda_i|\psi_i\rangle\langle \psi_i|$, $\operatorname{sgn}(B)=\sum_{i}\operatorname{sgn}(\lambda_i)|\psi_i\rangle\langle\psi_i|$, where the sign function $\operatorname{sgn}$ is defined as
	\begin{align*}
		\operatorname{sgn}(x):=\left\{\begin{aligned}
			&1,\,&x> 0,\,\\
			&-1,\,&x\le 0\,.
			\end{aligned}\right.
	\end{align*}

	Note that $|\lambda +\operatorname{sgn}(\lambda)|\ge 1$. Then
	\begin{equation*}
	|\lambda-\operatorname{sgn}(\lambda)|^2\le |(\lambda-\operatorname{sgn}(\lambda))(\lambda+\operatorname{sgn}(\lambda))|^2=|\lambda^2-1|^2
	\end{equation*}
	and 
\begin{equation*}
2^n\|B-C\|_2^2=\sum_{i}|\lambda_i-\operatorname{sgn}(\lambda_i)|^2\le \sum_{i}|\lambda_i^2-1|^2=2^n \|B^2-\un\|_2^2\,.
\end{equation*}
Therefore,
\begin{align*}
\|A-C\|_2\le \|A-B\|_2+\|B-C\|_2&\le \|A-B\|_2+\|B^2-\un\|_2\\
&\overset{(1)}{\le} \epsilon+\|B^2-A^2\|_2\\
&\le \epsilon+ \|(B-A)B\|_2+\|A(B-A)\|_2\\
&\le \epsilon\big(1+\|B\|+\|A\|\big)\\
&\overset{(2)}{\le} \epsilon\big(1+2\|A\|\big)\\
&\le 3\epsilon\,.
\end{align*}
where in $(1)$ we have used that $A^2=\un$, whereas in $(2)$ we used the fact that $B=2^{-|T|}\tr_T(A)$ for some set $T$ of qubits, so that $\|B\|\le \|A\|\le 1$. Moreover, we know the size of $T^c$ from Theorem \ref{thm:L1friedgut}. The result then follows after rescaling of $\epsilon$ to $\epsilon/3$.
\end{proof}

\begin{remark}
	In the case of a classical Boolean function $f$, we know that $\Inf f\equiv \Inf^1 f=\Inf^2 f$ and the bound in Corollary \ref{coro:quantbooleanjunta}, 
	simplifies as 
	\begin{align}\label{classical}
k\le  e^{\frac{270\Inf(f)}{\epsilon^2}}\Inf(f).
	\end{align}
		We therefore recover the classical Friedgut's Junta theorem.
\end{remark}

\begin{remark}
In the classical setting, other junta-type theorems related to Fourier analysis of Boolean functions may be found in \cite{friedgut2002boolean,alon2004graph,bourgain2002distribution,khot2006nonembeddability,kindler2002noise,dinur2006fourier}. While extending these results to the present quantum setting is an interesting problem, their statements do not directly involve the notion of influence that is central to our study. This interesting direction of research will therefore be considered elsewhere.
 
\end{remark}

\section{Von Neumann algebraic generalizations}\label{sec:generalization}

In this section, we generalize the main results from the previous section to the general von Neumann algebraic setting. Apart from technical challenges that arise from the fact that the underlying Hilbert space can be infinite-dimensional and the operators involved can be unbounded, most proofs run parallel to the ones for qubits once the appropriate assumptions are identified. As demonstrated in the next section, these hypotheses are satisfied for a number of interesting examples besides the qubit systems treated in Section~\ref{sec:mainresults}.

We start recapitulating some basic von Neumann algebra theory. As a general reference, we refer to \cite{Tak02, Tak03}. Let $\cH$ be a Hilbert space and $B(\cH)$ the space of all bounded linear operators on $\cH$. The $\sigma$-weak topology on $B(\cH)$ is the topology induced by the seminorms $\lvert\tr(\,\cdot\,x)\rvert$, where $x$ runs over the set of all trace-class operators. A \emph{von Neumann algebra} $\mathcal M$ on $\cH$ is a unital $\ast$-subalgebra of $B(\cH)$ that is closed in the $\sigma$-weak topology. A linear functional on $\mathcal M$ is called \emph{normal} if it is continuous with respect to the $\sigma$-weak topology. The set of all normal linear functionals on $\mathcal M$ is denoted by $\mathcal M_\ast$, and the obvious dual pairing between $\mathcal M$ and $\mathcal M_\ast$ establishes an isometric isomorphism between $\mathcal M$ and $(\mathcal M_\ast)^\ast$.

A \emph{state} on $\mathcal M$ is a positive linear functional $\phi\colon \mathcal M\to\com$ such that $\phi(\un)=1$. A state is called \emph{faithful} if $\phi(x^\ast x)=0$ implies $x=0$. For a faithful normal state $\phi$ on $\mathcal M$ let $\cH_\phi$ denote the completion of $M$ with respect to the inner product
\begin{equation*}
	\langle\,\cdot\,,\cdot\,\rangle_\phi\colon \mathcal M\times \mathcal M\to\com,\,(x,y)\mapsto \phi(x^\ast y),
\end{equation*}
and let $\Lambda_\phi(x)$ denote the image of $x$ inside $\cH_\phi$. The GNS representation is defined by $\pi_\phi(x)\Lambda_\phi(y)=\Lambda_\phi(xy)$. The vector $\Lambda_\phi(\un)$ is a cyclic and separating vector for $\pi_\phi(\mathcal M)$, which is denoted by $\Omega_\phi$. We routinely identify $\mathcal M$ with $\pi_\phi(\mathcal M)$.

For the definition of the noncommutative $L^p$ spaces, we need some basic modular theory. The operator
\begin{equation*}
	S_0\colon \Lambda_\phi(\mathcal M)\to \Lambda_\phi(\mathcal M),\,\Lambda_\phi(x)\mapsto\Lambda_\phi(x^\ast)
\end{equation*}
is a closable anti-linear operator on $\cH_\phi$. Let $S$ denote its closure and $S=J\Delta^{1/2}$ the polar decomposition of $S$. The operator $J$ is an anti-unitary involution, called the {\em modular conjugation}, and $\Delta=S^\ast S$ is called the \emph{modular operator}.

The symmetric embedding $i_2$ of $\mathcal M$ into $\cH_\phi$ is given by $i_2(x)=\Delta^{1/4}\Lambda_\phi(x)$ and the symmetric embedding $i_1$ of $\mathcal M$ into $\mathcal M_\ast$ is uniquely determined by the relation
\begin{equation*}
	\langle i_2(x^\ast),i_2(y)\rangle=i_1(x)(y),
\end{equation*}
or in other words, $i_1=i_2^\ast J i_2$ if we view $J$ as an isomorphism between $\cH_\phi$ and $\overline \cH_\phi\cong \cH_\phi^\ast$.

Kosaki's interpolation $L^p$ spaces \cite{KOSAKI1984} are defined as the complex interpolation space
\begin{equation*}
	L^p(\mathcal M,\phi)=(\mathcal M_\ast,i_1(\mathcal M))_{1/p}
\end{equation*}
for $p\in (1,\infty)$. Thus we get embeddings $i_p\colon \mathcal M\to L^p(\mathcal M,\phi)$ for $p\in (1,\infty)$ with
\begin{equation*}
	\lVert i_p(x)\rVert\leq \lVert i_1(x)\rVert^{1/p}\lVert x\rVert^{1-1/p}
\end{equation*}
for all $x\in \mathcal M$. In particular, $L^2(\mathcal M,\phi)\cong \cH_\phi$ isometrically, and the definition of $i_2$ is consistent with the definition given before under this identification.

In the case $\mathcal M=M_n(\mathbb C)$, every state $\phi$ on $\mathcal M$ is of the form $\phi=\tr(\,\cdot\,\sigma)$ for some density matrix $\sigma$. The state $\phi$ is faithful if and only if $\sigma$ is invertible. In this case $L^p(M_n(\mathbb C),\phi)$ can be identified with $M_n(\mathbb C)$ with the norm $\tr(\lvert\cdot\rvert^p)^{1/p}$, and the embedding $i_p$ is given by $i_p(x)=\sigma^{1/2p}x\sigma^{1/2p}$. In particular, $\lVert i_p(x)\rVert=\tr(\lvert \sigma^{1/2p}x\sigma^{1/2p}\rvert^p)^{1/p}$, which is the expression for the $L^p$ norm commonly used in quantum information theory.

A \emph{quantum Markov semigroup (QMS)} on $\mathcal M$ is a family $(P_t)_{t\geq 0}$ of normal bounded linear operators on $\mathcal M$ such that
\begin{itemize}
	\item $P_0=\mathrm{id}_{\mathcal M}$, $P_s P_t=P_{s+t}$ for $s,t\geq 0$,
	\item $P_t(x)\to x$ as $t\searrow 0$ in the $\sigma$-weak topology for every $x\in \mathcal M$,
	\item $\sum_{j,k=1}^n y_j^\ast P_t(x_j^\ast x_k)y_k\geq 0$ for all $x_1,\dots,x_n,y_1,\dots,y_n\in \mathcal M$ and $t\geq 0$,
	\item $P_t(\un)=1$ for all $t\geq 0$.
\end{itemize}
If $(P_t)_{t\ge 0}$ is a quantum Markov semigroup on $\mathcal M$, then $P_t$ has a pre-adjoint $(P_t)_\ast\colon \mathcal M_\ast\to \mathcal M_\ast$ for every $t\geq 0$, and $((P_t)_\ast)_{t\geq 0}$ is a strongly continuous semigroup on $\mathcal M_\ast$. The QMS $(P_t)_{t\ge 0}$ is called \emph{KMS-symmetric with respect to $\phi$} if 
\begin{equation*}
	\langle i_2(P_t(x)),i_2(y)\rangle=\langle i_2(x),i_2(P_t(y))\rangle
\end{equation*}
for all $x,y\in \mathcal M$ and $t\geq 0$. In this case, for all $p\in [1,\infty)$ and $t\geq 0$ the operator 
\begin{equation*}
	i_p(\mathcal M)\to i_p(\mathcal M),\,i_p(x)\mapsto i_p(P_t(x))
\end{equation*}
extends to a contraction $P_t^{(p)}$ on $L^p(\mathcal M,\phi)$, and $(P_t^{(p)})_{t\ge 0}$ is a strongly continuous semigroup. In particular, $(P_t)_\ast=P_t^{(1)}$. Occasionally we also write $P_t^{(\infty)}$ for $P_t$.

The generator of $(P_t^{(p)})_{t\ge 0}$ is defined by
\begin{align*}
	D(\L_p)&=\{x\in L^p(\mathcal M,\phi)\mid \lim_{t\searrow 0}\frac 1 t(x-P_t^{(p)}(x))\text{ exists}\},\\
	\L_p(x)&=\lim_{t\searrow 0}\frac 1 t(x-P_t^{(p)}(x)),
\end{align*}
where the limit is taken in the norm topology if $p\in [1,\infty)$ and in the $\sigma$-weak topology if $p=\infty$. We also write $\L$ for $\L_\infty$. Note that there are differing sign conventions for the generator; with our convention, $\L_2$ is a positive self-adjoint operator on $L^2(\mathcal M,\phi)$.

We make the following assumption:
	\begin{description}[style=multiline, labelwidth=1.5cm]
			\item[\namedlabel{(A0)}{(H0)}] There exists a $\ast$-subalgebra $\mathcal A$ of $D(\L)$ which is $\sigma$-weakly dense in $\mathcal M$ and invariant under $(P_t)_{t\ge 0}$.
	\end{description}
We can then define the \emph{carré du champ} operator as follows:
\begin{equation*}
	\Gamma\colon \mathcal A\times\mathcal A\to \mathcal A,\,\Gamma(x,y)=\frac 1 2(\L(x)^\ast y+x^\ast \L(y)-\L(x^\ast y)).
\end{equation*}
We write $\Gamma(x)$ for $\Gamma(x,x)$.

We will further use the following assumption:
	\begin{description}[style=multiline, labelwidth=1.5cm]
		\item[\namedlabel{(A1)}{(H1)}] Bakry--Émery gradient estimate: There exists $K\in\mathbb R$ such that
	\begin{equation*}
		\Gamma(P_t(x))\leq e^{-2Kt}P_t(\Gamma(x))
	\end{equation*}
	for all $x\in \mathcal A$ and $t\geq 0$.
\end{description}

To avoid case distinctions, the following notation will come in handy:
\begin{equation*}
	e_K(t)=2\int_0^t e^{2Ks}\,ds=\begin{cases}
		\frac{e^{2Kt}-1}{K}&\text{if }K\neq 0,\\2t&\text{if }K=0.
	\end{cases}
\end{equation*}
Further, we write $K_-$ for the negative part of a real number $K$. The following result is an analog of Lemma \ref{lem:technical lemma}.
\begin{lemma}\label{lem:reverse_Poincare}
	If $(P_t)_{t\ge 0}$ is a QMS satisfying \ref{(A0)}--\ref{(A1)}, then
	\begin{equation*}
		\Gamma(P_t(x))\leq \frac 1{e_K(t)}(P_t(x^\ast x)-P_t(x)^\ast P_t(x))\leq \frac {\lVert x\rVert^2}{e_K(t)}
	\end{equation*}
	for all $x\in\mathcal A$ and $t\geq 0$.
	
	In particular,
	\begin{equation*}
		\Gamma(P_t(x))\leq\begin{cases} \frac{\lVert x\rVert^2}{2t}&\text{if }K\geq 0,\\ \frac{\lVert x\rVert^2}{t}&\text{if }K< 0\end{cases}
	\end{equation*}
	for all $x\in \mathcal A$ and $t\in \left[0,\frac 1{2K_-}\right)$.
\end{lemma}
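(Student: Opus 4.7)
The plan is to mimic the calculation from the proof of Lemma \ref{lem:technical lemma}, upgraded so that the exponential factor $e^t - 1$ there (corresponding to the specific Ricci bound $K=1$ for the depolarizing semigroup) is replaced by the general quantity $\tfrac12 e_K(t)$ coming from the Bakry--Émery constant $K$.

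First I would fix $x\in\mathcal A$ and $t\geq 0$ and consider the function $f(s):=P_s(P_{t-s}(x)^\ast P_{t-s}(x))$ for $s\in[0,t]$. By \ref{(A0)}, $P_{t-s}(x)\in\mathcal A\subset D(\L)$, so we may differentiate. Using $\frac{d}{ds}P_s=-\L P_s=-P_s\L$ on $D(\L)$ and $\frac{d}{ds}P_{t-s}=\L P_{t-s}$, the cross terms combine into the carré du champ, giving
\begin{equation*}
f'(s)=P_s\bigl(\L(y^\ast)y+y^\ast\L(y)-\L(y^\ast y)\bigr)=2P_s\Gamma(P_{t-s}(x)),
\end{equation*}
with $y=P_{t-s}(x)$. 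Integrating from $0$ to $t$ yields the fundamental identity
\begin{equation*}
P_t(x^\ast x)-P_t(x)^\ast P_t(x)=2\int_0^t P_s\Gamma(P_{t-s}(x))\,ds.
\end{equation*}

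Next I apply \ref{(A1)} with parameter $s$ to the element $P_{t-s}(x)\in\mathcal A$, which gives $\Gamma(P_t(x))=\Gamma(P_s P_{t-s}(x))\leq e^{-2Ks}P_s(\Gamma(P_{t-s}(x)))$; rearranged, $P_s(\Gamma(P_{t-s}(x)))\geq e^{2Ks}\Gamma(P_t(x))$. Plugging this into the identity and recalling $e_K(t)=2\int_0^t e^{2Ks}\,ds$ gives
\begin{equation*}
P_t(x^\ast x)-P_t(x)^\ast P_t(x)\geq e_K(t)\,\Gamma(P_t(x)),
\end{equation*}
which is the first advertised inequality (after dividing by $e_K(t)>0$ for $t>0$; for $t=0$ both sides vanish). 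The second inequality follows since $P_t(x)^\ast P_t(x)\geq 0$ and $P_t$ is unital and positive, so $P_t(x^\ast x)-P_t(x)^\ast P_t(x)\leq P_t(x^\ast x)\leq \lVert x^\ast x\rVert\un\leq \lVert x\rVert^2\un$.

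For the ``In particular'' part I would compare $e_K(t)$ to a linear function of $t$ in each sign regime. When $K\geq 0$, the elementary bound $e^{2Ks}\geq 1$ gives $e_K(t)\geq 2t$. When $K<0$, write $\alpha=-2K>0$ and $t\in[0,1/\alpha]$; then $e_K(t)=\frac{2}{\alpha}(1-e^{-\alpha t})$ and the desired bound $e_K(t)\geq t$ is equivalent to $e^{-\alpha t}\leq 1-\alpha t/2$ on $[0,1]$, which I would verify by checking $\psi(u):=1-u/2-e^{-u}$ satisfies $\psi(0)=0$, $\psi'(u)=-1/2+e^{-u}$ is nonnegative on $[0,\log 2]$ and $\psi(1)=1/2-1/e>0$, so $\psi\geq 0$ on $[0,1]$ (since the unique interior critical point is a maximum and both endpoint values are nonnegative). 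These two estimates, inserted into the previous display, yield the stated bounds. I do not anticipate a genuine obstacle: the only subtle point is keeping the direction of \ref{(A1)} straight (used with time $s$ applied to $P_{t-s}(x)$, not with time $t-s$ applied to $x$), and the small calculus verification in the $K<0$ case.
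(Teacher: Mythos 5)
Your proposal is correct and follows essentially the same route as the paper: differentiate $s\mapsto P_s(P_{t-s}(x)^\ast P_{t-s}(x))$ to get the integral representation of $P_t(x^\ast x)-P_t(x)^\ast P_t(x)$, apply \ref{(A1)} at time $s$ to $P_{t-s}(x)$, and use unitality/positivity of $P_t$ for the second bound. The only cosmetic differences are that the paper pairs the identity with positive normal functionals $\omega\in\mathcal M_\ast$ rather than integrating the operator-valued derivative directly, and it verifies $e_K(t)\geq t$ for $K\leq 0$ via the concavity bound $e_K(t)\geq 2(1-e^{-1})t$ instead of your inequality $e^{-u}\leq 1-u/2$ on $[0,1]$; both are fine.
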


	\begin{description}[style=multiline, labelwidth=1.5cm]
		\item[\namedlabel{(A2)}{(H2)}] There exists a finite family of linear self-adjoint maps $d_j\colon \mathcal A\to \mathcal M$, $j\in \mathcal J$, such that
	\begin{equation}\label{(A2-1)}
		\langle i_2(x),i_2(\L(x))\rangle=\sum_{j\in\mathcal J}\lVert i_2(d_j(x))\rVert^2\tag{H2-1}
	\end{equation}
	and a constant $M>0$ such that
	\begin{equation}\label{(A2-2)}
		\max_{j\in \mathcal J}\lVert d_j(x)\rVert\leq M\lVert \Gamma(x)\rVert^{1/2}\tag{H2-2}
	\end{equation}
	for all $x\in \mathcal A$.
\end{description}

Note that (\ref{(A2-1)}) implies in particular that the series on the right side converges for all $x\in \mathcal A$, and by polarization,
\begin{equation}\label{eq:polarization}
	\langle i_2(x),i_2(\mathcal L(y))\rangle=\sum_{j\in\mathcal J}\langle i_2(d_j(x)),i_2(d_j(y))\rangle
\end{equation}
for all $x,y\in\mathcal A$.

In this situation we define the $p$-influence of the $j$-th variable on $x$ by $\Inf^p_j(x)=\lVert i_p(d_j(x))\rVert^p$ and the total influence of $x$ by $\Inf^p(x)=\sum_{j\in\mathcal J}\Inf^p_j(x)$.

We say $(P_t)_{t\ge 0}$ is \emph{primitive} if $P_t(x)\to \phi(x)\un$ $\sigma$-weakly as $t\to\infty$ for every $x\in \mathcal M$.

\begin{theorem}[$L^1$-Poincaré inequality]
	If $(P_t)_{t\ge 0}$ is a primitive KMS-symmetric QMS on $\mathcal M$ satisfying \ref{(A0)}--\ref{(A2)} with $K>0$, then
	\begin{equation*}
		\frac{\sqrt K}{M}\lVert i_1(x-\phi(x)\un)\rVert\leq \frac { \pi}{2} \Inf^1(x)
	\end{equation*}
	for all $x\in\mathcal A$.
\end{theorem}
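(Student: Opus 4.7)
The plan is to mirror the proof of Theorem~\ref{thm:poincar}: the argument there only used duality, a semigroup interpolation identity, a decomposition of $\mathcal L$ in terms of the $d_j$'s, and an $L^1$--$L^\infty$ Hölder bound on $\lVert d_j P_t B\rVert$ coming from the reverse Poincaré estimate of Lemma~\ref{lem:technical lemma}. Each ingredient has an abstract counterpart: \eqref{eq:polarization} (the polarized form of \eqref{(A2-1)}) supplies the decomposition of $\mathcal L$, while Lemma~\ref{lem:reverse_Poincare} combined with \eqref{(A2-2)} plays the role of Lemma~\ref{lem:technical lemma}. Fix $x\in\mathcal A$. By the duality $L^1(\mathcal M,\phi)\cong\mathcal M_\ast$ together with Kaplansky density of $\mathcal A$ in the unit ball of $\mathcal M$, it suffices to bound $|i_1(x-\phi(x)\un)(y)|$ uniformly over $y\in\mathcal A$ with $\lVert y\rVert\le 1$. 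A short check using $i_1(z)(y)=\langle i_2(z^\ast),i_2(y)\rangle$ and the fact that KMS-symmetry forces $\phi\circ P_t=\phi$ shows that this pairing coincides with $\langle i_2(x^\ast),i_2(y-\phi(y)\un)\rangle$. Primitivity yields $P_t y\to\phi(y)\un$ in the $\sigma$-weak topology as $t\to\infty$, whence the integral representation
\[
  i_1(x-\phi(x)\un)(y)=\int_0^\infty\langle i_2(x^\ast),i_2(\mathcal L P_t y)\rangle\,dt.
\]

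Applying \eqref{eq:polarization} and the $\ast$-preservation of each $d_j$, the integrand decomposes as $\sum_{j\in\mathcal J}\langle i_2(d_j x^\ast),i_2(d_j P_t y)\rangle$, and Hölder's inequality between $L^1$ and $L^\infty$ in the Kosaki scale gives
\[
  \bigl|\langle i_2(d_j x^\ast),i_2(d_j P_t y)\rangle\bigr|\le\lVert i_1(d_j x)\rVert\cdot\lVert d_j P_t y\rVert=\operatorname{Inf}_j^1(x)\cdot\lVert d_j P_t y\rVert.
\]
The operator-norm factor is controlled by combining \eqref{(A2-2)} with Lemma~\ref{lem:reverse_Poincare}:
\[
  \lVert d_j P_t y\rVert\le M\lVert\Gamma(P_t y)\rVert^{1/2}\le\frac{M\lVert y\rVert}{\sqrt{e_K(t)}}\le\frac{M}{\sqrt{e_K(t)}}.
\]

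Summing over $j$ and integrating, together with the substitution $u=2Kt$ (valid since $K>0$),
\[
  \int_0^\infty\frac{dt}{\sqrt{e_K(t)}}=\sqrt{K}\int_0^\infty\frac{dt}{\sqrt{e^{2Kt}-1}}=\frac{1}{2\sqrt{K}}\int_0^\infty\frac{du}{\sqrt{e^u-1}}=\frac{\pi}{2\sqrt{K}},
\]
which produces $|i_1(x-\phi(x)\un)(y)|\le\tfrac{M\pi}{2\sqrt{K}}\operatorname{Inf}^1(x)$; taking the supremum over $y$ gives the claim after rearranging. The main obstacle is technical bookkeeping rather than a new idea: one must justify the $\sigma$-weak integral representation (the integrand is integrable at $t=0$ since $e_K(t)\sim 2t$ there, and at $t=\infty$ by exponential decay when $K>0$) and confirm that Hölder duality in the Kosaki scale applies to the $d_j$-differences, which a priori live only in $\mathcal M$.
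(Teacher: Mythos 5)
Your proposal is correct and follows essentially the same route as the paper's proof: duality over the unit ball of $\mathcal A$, the primitivity-based integral representation via $\mathcal L P_t$, the polarized form \eqref{eq:polarization} of \ref{(A2)} to decompose the integrand, the $L^1$--$L^\infty$ Hölder bound, and Lemma \ref{lem:reverse_Poincare} combined with (\ref{(A2-2)}) to control $\lVert d_j P_t y\rVert$, ending with the same evaluation $\int_0^\infty e_K(t)^{-1/2}\,dt=\pi/(2\sqrt K)$. The only differences are cosmetic bookkeeping (e.g.\ invoking Kaplansky density and $\phi\circ P_t=\phi$, neither of which is needed beyond what the paper already uses).
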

\begin{proof}
	By duality and $\sigma$-weak density of $\mathcal A$ in $\mathcal M$, we have
	\begin{equation*}
		\lVert i_1(x-\phi(x)\un)\rVert=\sup_{\substack{y\in\mathcal A\\\lVert y\rVert\leq 1}}\lvert i_1(x-\phi(x)\un)(y)\rvert=\sup_{\substack{y\in\mathcal A\\\lVert y\rVert\leq 1}}\lvert i_1(x)(y-\phi(y)\un)\rvert.
	\end{equation*}
	Since $(P_t)_{t\ge 0}$ is primitive,
	\begin{align*}
		\lvert i_1(x)(y-\phi(y)\un)\rvert&=\lim_{T\to\infty}\lvert i_1(x)(y-P_T(y))\rvert\\
		&=\left\lvert\int_0^\infty i_1(x)(\L P_t(y))\,dt\right\rvert\\
		&=\left\lvert\int_0^\infty \langle i_2(x^\ast),i_2(\L(P_t(y)))\rangle\,dt\right\rvert.
	\end{align*}
	Now by the consequence \eqref{eq:polarization} of (\ref{(A2-1)}),
	\begin{align*}
		\left\lvert\int_0^\infty \langle i_2(x^\ast),i_2(\L(P_t(y)))\rangle\,dt\right\rvert&=\left\lvert\int_0^\infty \sum_{j\in\mathcal J} \langle i_2(d_j(x^\ast)),i_2(d_j(P_t(y)))\rangle\,dt\right\rvert\\
		&\leq \sum_{j\in\mathcal J}\int_0^\infty\lvert i_1(d_j(x))(d_j(P_t(y)))\rvert\,dt
		\\
		&\leq \sum_{j\in\mathcal J}\Inf^1_j(x)\int_0^\infty\lVert d_j(P_t(y))\rVert\,dt
	\end{align*}
	By Lemma \ref{lem:reverse_Poincare} and (\ref{(A2-2)}),
	\begin{align*}
		\int_0^\infty\lVert d_j(P_t(y))\rVert\,dt\leq M\int_0^\infty \lVert\Gamma(P_t(y))\rVert^{1/2}\,dt\leq M\sqrt K \lVert y\rVert\int_0^\infty\frac {dt}{\sqrt{e^{2Kt}-1}}=\frac{\pi M}{2\sqrt K}\lVert y\rVert.
	\end{align*}
	All combined, we obtain the desired inequality.
\end{proof}

To prove our general noncommutative version of the $L^1$-Talagrand inequality, we need some more assumptions on $(P_t)_{t\ge 0}$, which we collect in the following:

	\begin{description}[style=multiline, labelwidth=1.5cm]
		\item[\namedlabel{(A3)}{(H3)}] Poincaré inequality: There exists a constant $\lambda>0$ such that
	\begin{equation*}
		\lambda \lVert i_2(x-\phi(x)\un)\rVert^2\leq \langle i_2(x),i_2(\L(x))\rangle
	\end{equation*}
	for all $x\in\mathcal A$.
	\item[\namedlabel{(A4)}{(H4)}] Hypercontractivity: There exists a constant $\alpha>0$ such that 
	\begin{equation*}
		\lVert i_2(P_t(x))\rVert\leq \lVert i_p(x)\rVert
	\end{equation*}
	for all $x\in\mathcal A$, $t\geq 0$ and $p=1+e^{-2\alpha t}$.
	\item[\namedlabel{(A5)}{(H5)}] Intertwining: There exists a constant $\mu\in\mathbb R$ such that
	\begin{equation*}
		\lVert i_p(d_j(P_t(x)))\rVert\leq e^{-\mu t}\lVert i_p(P_t(d_j(x)))\rVert
	\end{equation*}
	for all $x\in \mathcal A$, $j\in\mathcal J$, $p\in [1,\infty]$ and $t\geq 0$.
\end{description}

In fact, it is well-known \cite{olkiewicz1999hypercontractivity} that hypercontractivity \ref{(A4)} implies Poincar\'e inequality \ref{(A3)}. 

The proof of the following theorem follows the argument given by Cordero--Erausquin and Ledoux \cite[Theorem 6]{cel12talagrand} in the commutative case. We refer to the appendix for the details.

\begin{theorem}[$L^1$-Talagrand inequality]\label{thm:general_Talagrand}
	If $(P_t)_{t\ge 0}$ is a KMS-symmetric QMS on $\mathcal M$ satisfying \ref{(A0)}--\ref{(A5)}, then there exists a constant $C>0$ depending only on the constants $K,M,\alpha,\lambda,\mu$ such that
	\begin{equation*}
		\lVert i_2(x-\phi(x)\un)\rVert^2\leq C\sum_{j\in\mathcal J}\frac{\Inf^1_j(x)(1+\Inf^1_j(x))}{(1+\log^+(1/\Inf^1_j(x)))^{1/2}}
	\end{equation*}
	for all $x\in\mathcal A$ with $\lVert x\rVert\leq 1$.
\end{theorem}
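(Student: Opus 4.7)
The plan is to follow the blueprint of the proof of Theorem \ref{quantumTalagrand} on the qubit cube, with the role of the intertwining, hypercontractivity, and gradient/$L^\infty$ estimates played respectively by assumptions \ref{(A5)}, \ref{(A4)}, and Lemma \ref{lem:reverse_Poincare} combined with (\ref{(A2-2)}). The Poincaré step at the end is supplied by \ref{(A3)}.

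First, I would differentiate $t\mapsto \lVert i_2(P_t x)\rVert^2$ and apply the polarized form \eqref{eq:polarization} of (\ref{(A2-1)}) to write, for any $T>0$,
\begin{equation*}
	\lVert i_2(x)\rVert^2 - \lVert i_2(P_T x)\rVert^2 = 4\int_0^{T/2}\sum_{j\in\mathcal J}\lVert i_2(d_j P_{2t}x)\rVert^2\,dt.
\end{equation*}
For each integrand, I would split $P_{2t}=P_t\circ P_t$ and combine \ref{(A5)} with the hypercontractivity \ref{(A4)} applied to $y = d_j P_t x$, yielding
\begin{equation*}
	\lVert i_2(d_j P_{2t}x)\rVert \le e^{-\mu t}\lVert i_2(P_t d_j P_t x)\rVert \le e^{-\mu t}\lVert i_{p(t)}(d_j P_t x)\rVert,\qquad p(t)=1+e^{-2\alpha t}.
\end{equation*}

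Next, I would interpolate the right-hand side between $L^1$ and $L^\infty$ via the inequality $\lVert i_p(y)\rVert\le \lVert i_1(y)\rVert^{1/p}\lVert y\rVert^{1-1/p}$. For the $L^1$ factor, \ref{(A5)} with $p=1$ together with the fact that $(P_t)_\ast = P_t^{(1)}$ is a contraction on $\mathcal M_\ast$ gives $\lVert i_1(d_j P_t x)\rVert\le e^{-\mu t}\Inf^1_j(x)$. For the $L^\infty$ factor, (\ref{(A2-2)}) together with Lemma \ref{lem:reverse_Poincare} gives $\lVert d_j P_t x\rVert\le M\lVert x\rVert/\sqrt{e_K(t)}$. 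Assuming $\lVert x\rVert\le 1$, this bundles into
\begin{equation*}
	\lVert i_2(d_j P_{2t}x)\rVert^2 \le e^{-2\mu t(1+1/p(t))}\,\Inf^1_j(x)^{2/p(t)}\,\frac{M^{2(1-1/p(t))}}{e_K(t)^{1-1/p(t)}}.
\end{equation*}

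To finish, I would restrict $T$ so that $T/2\le \min\{1,1/(2\alpha),1/(2K_-)\}$; by Lemma \ref{lem:reverse_Poincare} this ensures $e_K(t)\ge c\,t$ on $[0,T/2]$, the exponential prefactor $e^{-2\mu t(1+1/p(t))}$ stays bounded by a constant depending on $\mu$, and $p(t)$ has the exact form required by Lemma \ref{lem:integral_bound}. Setting $a_j=\Inf^1_j(x)$ and factoring $a_j^{2/p(t)}=a_j\cdot a_j^{2/p(t)-1}$, that lemma yields
\begin{equation*}
	\int_0^{T/2}\lVert i_2(d_j P_{2t}x)\rVert^2\,dt \le C_1\,\frac{\Inf^1_j(x)(1+\Inf^1_j(x))}{(1+\log^+(1/\Inf^1_j(x)))^{1/2}},
\end{equation*}
with $C_1$ depending only on $K,M,\alpha,\mu$. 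Summing over $j$ and combining with the Poincaré inequality \ref{(A3)} in the integrated form $\lVert i_2(x-\phi(x)\un)\rVert^2\le (1-e^{-2\lambda T})^{-1}(\lVert i_2(x)\rVert^2-\lVert i_2(P_T x)\rVert^2)$ then delivers the desired estimate.

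The main obstacles I anticipate are bookkeeping-level rather than conceptual: keeping the interpolation factor $1-1/p(t)$ small enough near $t=0$ so that $e_K(t)^{-(1-1/p(t))}$ is integrable, which is precisely the point where Lemma \ref{lem:integral_bound} and the choice $p(t)=1+e^{-2\alpha t}$ conspire correctly; and handling sign-indefinite $\mu$ and $K$, which is why $T$ must be chosen as a constant depending on all of $K,\alpha,\mu$ rather than sent to infinity as in a primitive-semigroup argument.
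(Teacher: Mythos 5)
Your proposal is correct and follows essentially the same route as the paper's proof: differentiate $t\mapsto\lVert i_2(P_t x)\rVert^2$, use \eqref{eq:polarization}, combine \ref{(A5)} with \ref{(A4)}, interpolate between $\lVert i_1(\cdot)\rVert$ (controlled via \ref{(A5)} and $L^1$-contractivity) and the operator norm (controlled via Lemma \ref{lem:reverse_Poincare} and (\ref{(A2-2)})), then invoke Lemma \ref{lem:integral_bound} with a suitably bounded $T$ and close with \ref{(A3)}. The only differences are cosmetic bookkeeping choices (your cutoff $T/2\le\min\{1,1/(2\alpha),1/(2K_-)\}$ versus the paper's $T=\min\{2,1/K_-,1/\alpha\}$, and how the constants in $K$, $\mu$ are absorbed), which do not change the argument.
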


Again following \cite{cel12talagrand}, we can also give a generalization of Talagrand's inequality \eqref{ineq:talagrand} in this setting.

\begin{theorem}
If $(P_t)$ is a KMS-symmetric QMS on $\mathcal M$ satisfying \ref{(A0)}, \ref{(A2)}--\ref{(A5)}, then
\begin{equation*}
\lVert i_2(x-\phi(x)\un)\rVert^2\leq \frac{2e^{(2\alpha-\mu)_+/2\lambda}}{\alpha (1-e^{-1})}\sum_{j\in\mathcal J}\frac{\Inf^2_j(x)}{1+\log(\sqrt{\Inf_j^2(x)}/\Inf_j^1(x))}
\end{equation*}
for all $x\in\mathcal A$.
\end{theorem}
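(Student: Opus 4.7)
\smallskip

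The plan is to adapt the proof of Theorem \ref{thm:general_Talagrand} by interpolating between $L^1$ and $L^2$ (instead of between $L^1$ and $L^\infty$), so that the upper bound naturally features $\Inf^2_j(x) = \|i_2(d_j(x))\|$ rather than a supremum-norm quantity. First I would fix $T = 1/(2\lambda)$ (this is what produces the $1-e^{-1}$ denominator) and apply the Poincaré inequality \ref{(A3)} to write
\begin{equation*}
\|i_2(x-\phi(x)\un)\|^2 \leq \frac{1}{1-e^{-1}}\bigl(\|i_2(x)\|^2 - \|i_2(P_T(x))\|^2\bigr).
\end{equation*}
As in the proof of Theorem \ref{thm:general_Talagrand}, differentiation of $t\mapsto\|i_2(P_t(x))\|^2$ together with (\ref{(A2-1)}) gives
\begin{equation*}
\|i_2(x)\|^2 - \|i_2(P_T(x))\|^2 = 2\sum_{j\in\mathcal J}\int_0^T \|i_2(d_j(P_t(x)))\|^2\, dt.
\end{equation*}

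Next I would use intertwining \ref{(A5)} followed by hypercontractivity \ref{(A4)} to get $\|i_2(d_j(P_t(x)))\| \leq e^{-\mu t}\|i_{p(t)}(d_j(x))\|$ with $p(t) = 1+e^{-2\alpha t}$, and then apply log-convexity of the Kosaki $L^p$-norms to interpolate
\begin{equation*}
\|i_{p(t)}(d_j(x))\| \leq \|i_1(d_j(x))\|^{\theta(t)}\|i_2(d_j(x))\|^{1-\theta(t)}, \qquad \theta(t) = \tfrac{2}{p(t)}-1 = \tanh(\alpha t).
\end{equation*}
Squaring yields the pointwise estimate $\|i_2(d_j(P_t(x)))\|^2 \leq e^{-2\mu t}\Inf^2_j(x)^2\, u_j^{2\tanh(\alpha t)}$ where $u_j := \Inf^1_j(x)/\Inf^2_j(x)\in (0,1]$.

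The heart of the argument is controlling the integral
\begin{equation*}
I_j := \int_0^T e^{-2\mu t}\, u_j^{2\tanh(\alpha t)}\, dt.
\end{equation*}
The trick for producing the factor $e^{(2\alpha-\mu)_+/(2\lambda)}$ is the algebraic splitting $e^{-2\mu t} = e^{-(2\alpha+\mu)t}\, e^{(2\alpha-\mu)t}$; since $t\in [0,T]$ with $T=1/(2\lambda)$, the second factor is bounded pointwise by $e^{(2\alpha-\mu)_+T} = e^{(2\alpha-\mu)_+/(2\lambda)}$. It then remains to estimate $\int_0^T e^{-(2\alpha+\mu)t} u^{2\tanh(\alpha t)}\,dt$. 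On $[0,1/(2\alpha)]$, the Lemma \ref{lem:integral_bound}-type estimate (essentially $\frac{e^\beta-1}{e^\beta+1}\geq \beta/4$ for $\beta\in [0,1]$ used in the proof of Lemma \ref{lem:integral_bound}) gives $2\tanh(\alpha t)\geq \alpha t$, so $u^{2\tanh(\alpha t)} \leq u^{\alpha t}$ and the integrand decays like $e^{-(2\alpha+\mu+\alpha\log(1/u))t}$, contributing a term of order $\frac{1}{\alpha(1+\log(1/u))}$. On $[1/(2\alpha),T]$ one uses that $\tanh$ is increasing to bound $u^{2\tanh(\alpha t)}\leq u^{2\tanh(1/2)}$, producing an exponentially decaying contribution in $\log(1/u)$ that is dominated by the first piece. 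Combining yields $I_j \leq \frac{C\, e^{(2\alpha-\mu)_+/(2\lambda)}}{\alpha(1+\log(1/u_j))}$ for an absolute constant.

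Plugging this back into Steps 1--2 produces the stated inequality. The main obstacle is the execution of the integral estimate in the last step: the splitting of $e^{-2\mu t}$ must be calibrated to output exactly the exponent $(2\alpha-\mu)_+/(2\lambda)$ (not, say, $(\alpha-\mu)_+/\lambda$, which would come from a naive bound), and combining the two regimes $[0,1/(2\alpha)]$ and $[1/(2\alpha),T]$ while keeping the constant tight requires some care; implicit in the cleanest form of the argument is the assumption $2\alpha+\mu\geq 0$, since otherwise the rescued exponential $e^{-(2\alpha+\mu)t}$ fails to decay and additional handling would be needed.
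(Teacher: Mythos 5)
Your first three steps (Poincar\'e with $T=1/(2\lambda)$, the derivative identity via \eqref{(A2-1)}, then \ref{(A5)} followed by \ref{(A4)}) and the $L^1$--$L^2$ interpolation are exactly the paper's proof; the divergence is only in how the weight $e^{-2\mu t}$ and the time integral are handled, and that is where there is a genuine gap. Splitting $e^{-2\mu t}=e^{-(2\alpha+\mu)t}e^{(2\alpha-\mu)t}$ and taking the supremum of the second factor leaves the $\mu$-dependent factor $e^{-(2\alpha+\mu)t}$ inside the integral, while \ref{(A5)} allows any $\mu\in\mathbb R$. If $\mu\leq-2\alpha$ (or $2\alpha+\mu$ is merely small) the leftover integral $\int_0^T e^{-(2\alpha+\mu)t}u^{2\tanh(\alpha t)}\,dt$ is not $O\big(1/(\alpha(1+\log(1/u)))\big)$: already for $\mu=-2\alpha$ and $u$ close to $1$ it is about $T=1/(2\lambda)$, and $\lambda$ is not controlled by $\alpha$ in the hypotheses, so no absolute constant works. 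Your caveat ``$2\alpha+\mu\geq 0$'' is therefore a real restriction, not a cosmetic one; moreover even for $0\leq 2\alpha+\mu\ll\alpha$ the tail over $[1/(2\alpha),T]$ contributes roughly $\min\{T,\,1/(2\alpha+\mu)\}\,u^{2\tanh(1/2)}$, which for $u$ near $1$ is again not $\lesssim 1/\alpha$.

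The paper avoids this entirely: after the same interpolation it changes variables $s=p(t)=1+e^{-2\alpha t}$, so that $e^{-2\mu t}\,dt$ becomes $\tfrac 1{2\alpha}(s-1)^{\mu/\alpha-1}\,ds$ on $[1+e^{-\alpha/\lambda},2]$; the entire $(s-1)$-power (i.e.\ all $\mu$- and $\lambda$-dependence) is bounded by its supremum and pulled out as the exponential prefactor, and what remains is the $\mu$-free elementary integral $\int_1^2 u^{4/s-2}\,ds\leq 2/(1+\log(1/u))$ from \cite{cel12talagrand}. In your language this amounts to splitting off $e^{(2\alpha-2\mu)t}$ rather than $e^{(2\alpha-\mu)t}$, so that the factor kept under the integral decays at rate $2\alpha$ for every $\mu$; the price is that the supremum then gives the exponent $(\alpha-\mu)_+/\lambda$ rather than $(2\alpha-\mu)_+/(2\lambda)$ (the two coincide at $\mu=0$, and for $\mu\geq 0$ the former is the smaller, so the stated constant still follows; for $\mu<0$ the stated exponent seems to reflect a slip in the paper's own bookkeeping, where $(s-1)^{\mu/2\alpha-1}$ should read $(s-1)^{\mu/\alpha-1}$). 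For $\mu\geq 0$ your route does close, most cleanly by bounding $e^{-(2\alpha+\mu)t}\leq e^{-2\alpha t}$ and then using the same elementary integral bound (your two-regime estimate only yields some absolute constant, not the precise factor $2$); but as written the proposal does not prove the theorem for negative $\mu$.
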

\begin{proof}
By the Poincaré inequality \ref{(A3)}, we have
\begin{equation*}
\lVert i_2(x-\phi(x)\un)\rVert^2\leq \frac 1{1-e^{-1}}(\lVert i_2(x)\rVert^2-\lVert i_2(P_T(x))\rVert^2
\end{equation*}
for $T=1/2\lambda$.

Arguing as in the proof of Theorem \ref{quantumTalagrand}, we get
\begin{align*}
\lVert i_2(x)\rVert^2-\lVert i_2(P_T(x))\rVert=2\sum_{j\in \mathcal J}\int_0^T \lVert i_2(d_j(P_t(x)))\rVert^2\,dt.
\end{align*}
By \ref{(A4)} and \ref{(A5)}, 
\begin{align*}
\lVert i_2(d_j(P_t(x)))\rVert\leq e^{-\mu t}\lVert i_2(P_t(d_j(x)))\rVert\leq e^{-\mu t}\lVert i_{p(t)}(d_j(x))\rVert
\end{align*}
with $p(t)=1+e^{-2\alpha t}$.

After the change of variables $s=p(t)$ and application of Hölder's inequality we get
\begin{align*}
\lVert i_2(x-\phi(x)\un)\rVert^2&\leq \frac{2}{1-e^{-1}}\sum_{j\in \mathcal J}\int_0^T e^{-2\mu t}\lVert i_{p(t)}(d_j(x))\rVert^2\,dt\\
&=\frac 1{\alpha(1-e^{-1})}\sum_{j\in\mathcal J}\int_{1+e^{-\alpha/\lambda}}^2 (s-1)^{\mu/2\alpha-1}\lVert i_s(d_j(x))\rVert^2\,ds\\
&\leq \frac{e^{(2\alpha-\mu)_+/2\lambda}}{\alpha (1-e^{-1})}\sum_{j\in \mathcal J}\int_1^2 \lVert i_s(d_j(x))\rVert^2\,ds\\
&\leq \frac{e^{(2\alpha-\mu)_+/2\lambda}}{\alpha (1-e^{-1})}\sum_{j\in\mathcal J}\int_1^2 \Inf^1_j(x)^{4/s-2}\Inf^2_j(x)^{2-2/s}\,ds\\
&=\frac{e^{(2\alpha-\mu)_+/2\lambda}}{\alpha (1-e^{-1})}\sum_{j\in\mathcal J}\Inf_j^2(x)\int_1^2 (\Inf^1_j(x)/\Inf_j^2(x))^{2/s-2}\,ds.
\end{align*}
From here, the claimed inequality follows from an elementary bound on the last integral (compare \cite[Theorem 1]{cel12talagrand}).
\end{proof}


Since the $p$-influences for different $p$ do not coincide in the quantum setting, this version of Talagrand's inequality does not imply a KKL bound. However, we still have the following weaker bound as consequence of Theorem \ref{thm:general_Talagrand}. Again, the proof can be found in the appendix.

\begin{theorem}\label{ThmKKLgeneral}
If $(P_t)$ is a KMS-symmetric QMS on $\mathcal M$ satisfying \ref{(A0)}--\ref{(A5)} and the cardinality $n$ of $\mathcal J$ is finite, then there exists $C'>0$ depending only on the constants $K,L,M,\alpha,\lambda,\mu$ 
such that
\begin{equation*}
\max_{j\in\mathcal J}\Inf^1_j(x)\geq C'\frac{\sqrt{\log n}}{n}
\end{equation*}
for all self-adjoint $x\in\mathcal A$ with $\lVert i_2(x)\rVert=1$, $\lVert x\rVert\leq 1$ and $\phi(x)=0$.
\end{theorem}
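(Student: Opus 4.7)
The plan is to deduce this generalized KKL theorem as a direct corollary of the general $L^1$-Talagrand inequality (Theorem \ref{thm:general_Talagrand}) combined with the elementary auxiliary lemma (Lemma \ref{lem:auxiliary_KKL}), in complete analogy with the proof of Theorem \ref{thm:quantum kkl} in the qubit case. Here one should interpret $n := |\mathcal J|$.

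First, I would observe that the normalization hypotheses simplify the left-hand side of Talagrand's inequality: since $\phi(x) = 0$, we have $x - \phi(x)\un = x$, and therefore $\lVert i_2(x - \phi(x)\un)\rVert^2 = \lVert i_2(x)\rVert^2 = 1$. Next, since $\lVert x\rVert \leq 1$, the hypotheses of Theorem \ref{thm:general_Talagrand} are satisfied, and we obtain
\begin{equation*}
1 \;\leq\; C \sum_{j \in \mathcal J} \frac{\Inf^1_j(x)(1 + \Inf^1_j(x))}{(1 + \log^+(1/\Inf^1_j(x)))^{1/2}},
\end{equation*}
where $C > 0$ depends only on the constants $K, M, \alpha, \lambda, \mu$ from the assumptions.

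Finally, I would apply Lemma \ref{lem:auxiliary_KKL} with $a_j := \Inf^1_j(x)$ and $c := 1/C$. This is a purely elementary lemma about non-negative sequences — the quantum or classical nature of the underlying setting plays no role — so it carries over verbatim from Section 3. It yields
\begin{equation*}
\max_{j \in \mathcal J}\Inf^1_j(x) \;\geq\; \min\!\left\{\frac{1}{2\sqrt{2}\,C},\,1\right\}\frac{\sqrt{\log n}}{n},
\end{equation*}
which is the desired bound with $C' := \min\{(2\sqrt 2 C)^{-1}, 1\}$, a constant depending only on $K, M, \alpha, \lambda, \mu$ (and, as listed, $L$, should it enter through any of the constants inherited from Theorem \ref{thm:general_Talagrand}).

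There is no real obstacle in this argument since all the heavy lifting — hypercontractivity, intertwining, the Bakry--Émery gradient estimate, and their combination into the Talagrand-type inequality — has already been done in Theorem \ref{thm:general_Talagrand}. The only minor point to verify is the identification of $n$ with $|\mathcal J|$ in the statement, and to note that the elementary Lemma \ref{lem:auxiliary_KKL} applies without modification in the abstract setting because its proof uses only the Cauchy--Schwarz-type bookkeeping on the scalar sequence $(\Inf^1_j(x))_{j \in \mathcal J}$.
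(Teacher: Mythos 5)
Your proposal is correct and is essentially identical to the paper's own proof: apply the general $L^1$-Talagrand inequality (Theorem \ref{thm:general_Talagrand}), use $\phi(x)=0$ and $\lVert i_2(x)\rVert=1$ to reduce the left-hand side to $1$, and then invoke the elementary Lemma \ref{lem:auxiliary_KKL} with $a_j=\Inf^1_j(x)$, $c=1/C$ and $n=|\mathcal J|$. No gaps.
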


\begin{remark}
	The sharpness of the bound derived in Theorem \ref{ThmKKLgeneral} in the present general context was shown in \cite{Keller12}. 
\end{remark}

To prove our generalized version of Friedgut's junta theorem, we need one last assumption on $(P_t)$. For that purpose, if $\mathcal I\subset \mathcal J$, let $E_{\mathcal I}$ denote the orthogonal projection onto $\bigcap_{i\in\mathcal I}\overline{i_2(\ker d_i)}$ in $L^2(\mathcal M,\phi)$.

	\begin{description}[style=multiline, labelwidth=1.5cm]
	\item[\namedlabel{(A6)}{(H6)}] There exists a constant $\nu>0$ such that
	\begin{equation*}
		\nu\lVert i_2(x)-E_{\mathcal I}(i_2(x))\rVert^2\leq \sum_{i\in\mathcal I}\lVert i_2(d_i(x))\rVert^2
	\end{equation*}
for every $x\in \mathcal A$ and $\mathcal I\subset\mathcal J$.
\end{description}

If $(P_t)$ is primitive, then $E_{\mathcal J}(i_2(x))=i_2(\phi(x)\un)$. Thus \ref{(A6)} is a strengthening of the Poincaré inequality from \ref{(A3)} in the case of primitive QMS.

\begin{lemma}\label{lem:cor_Poincare}
If $(P_t)$ is a KMS-symmetric QMS on $\mathcal M$ satisfying \ref{(A0)}, \ref{(A2)}, \ref{(A4)}--\ref{(A6)}, then for any $x\in \mathcal A$, $t,\eta>0$ and $\mathcal I\subset \mathcal J$ such that $\Inf^1_i(x)\leq \eta$ for all $i\in\mathcal I$ one has
\begin{equation*}
\lVert (\mathrm{id}-E_{\mathcal I})(i_2(P_t(x)))\rVert^2\leq \frac{e^{-\mu t}}{\nu}(\eta\Inf^1(x))^{q(t)}(\Inf^2(x))^{1-q(t)},
\end{equation*}
where $q(t)=\frac{1-e^{-2\alpha t}}{1+e^{-2\alpha t}}$.
\end{lemma}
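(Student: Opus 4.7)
The plan is to transcribe the argument of Lemma \ref{techlemma} into the abstract von Neumann algebraic setting, with the tensor-Poincaré bound for the depolarizing semigroup, the exact intertwining identity, and the hypercontractivity used there replaced respectively by axioms \ref{(A6)}, \ref{(A5)}, and \ref{(A4)}. The pipeline is: (i) project onto $L^2$-gradients via the strengthened Poincaré inequality \ref{(A6)}; (ii) move the semigroup past each $d_i$ via intertwining; (iii) apply hypercontractivity to the remaining $P_t$; (iv) interpolate the resulting $L^{p(t)}$ norm between $L^1$ and $L^2$; (v) aggregate the per-$i$ bounds with Hölder.

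Concretely, I would start by applying \ref{(A6)} with the index set $\mathcal I$ to obtain
\begin{equation*}
\nu\,\lVert (\mathrm{id}-E_{\mathcal I})(i_2(P_t(x)))\rVert^2 \leq \sum_{i\in\mathcal I}\lVert i_2(d_i(P_t(x)))\rVert^2.
\end{equation*}
For each summand I would invoke \ref{(A5)} at $p=2$ followed by \ref{(A4)} to get
\begin{equation*}
\lVert i_2(d_i(P_t(x)))\rVert \leq e^{-\mu t}\lVert i_2(P_t(d_i(x)))\rVert \leq e^{-\mu t}\lVert i_{p(t)}(d_i(x))\rVert,
\end{equation*}
with $p(t)=1+e^{-2\alpha t}$. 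Then I would apply the standard complex interpolation in Kosaki's scale between the embeddings $i_1$ and $i_2$, which gives
\begin{equation*}
\lVert i_{p(t)}(d_i(x))\rVert \leq \lVert i_1(d_i(x))\rVert^{q(t)}\lVert i_2(d_i(x))\rVert^{1-q(t)},
\end{equation*}
the exponent $q(t)=(2-p(t))/p(t)=(1-e^{-2\alpha t})/(1+e^{-2\alpha t})$ being fixed by the identity $1/p(t)=q(t)+(1-q(t))/2$. Squaring and summing over $i\in\mathcal I$, a Hölder inequality with conjugate exponents $1/q(t)$ and $1/(1-q(t))$ factorizes the right-hand side as
\begin{equation*}
\bigl(\textstyle\sum_{i\in\mathcal I}\lVert i_1(d_i(x))\rVert^2\bigr)^{q(t)}\bigl(\textstyle\sum_{i\in\mathcal I}\lVert i_2(d_i(x))\rVert^2\bigr)^{1-q(t)}.
\end{equation*}
The hypothesis $\lVert i_1(d_i(x))\rVert=\Inf^1_i(x)\leq \eta$ on $\mathcal I$ bounds the first factor by $\eta\,\Inf^1(x)$, while \ref{(A2-1)} bounds the second by $\Inf^2(x)$, yielding the claim.

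The only nontrivial ingredient is step (iv), the $L^1$--$L^2$ interpolation inequality in Kosaki's $L^p$-scale; this is immediate from the three-line lemma together with the defining property $L^{p(t)}(\mathcal M,\phi)=(\mathcal M_\ast,i_1(\mathcal M))_{1/p(t)}$ recalled at the beginning of Section \ref{sec:generalization}, and in particular respects the embeddings $i_p$. Everything else is a direct port of the earlier proof, the only cosmetic difference being that squaring the intertwining estimate \ref{(A5)} naturally produces a prefactor $e^{-2\mu t}$, which is at least as strong as the stated $e^{-\mu t}$ whenever $\mu \geq 0$.
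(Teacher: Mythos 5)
Your argument is correct and essentially identical to the paper's own proof: apply (A6) over $\mathcal I$, then (A5) followed by (A4) to each term, interpolate $\lVert i_{p(t)}(d_i(x))\rVert$ between the $i_1$- and $i_2$-norms with exponent $q(t)=(2-p(t))/p(t)$, and finish with H\"older over $i\in\mathcal I$ together with $\Inf^1_i(x)\le\eta$ and (A2-1). Your closing remark is also on point: squaring the intertwining estimate really produces $e^{-2\mu t}$, and the stated prefactor $e^{-\mu t}$ follows from that only when $\mu\ge 0$ (the paper makes the same silent replacement), so your derivation proves a bound at least as strong as the lemma in that regime and nothing essential is missing.
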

\begin{proof}
By \ref{(A6)} we have
\begin{equation*}
\lVert i_2(P_t(x))-E_{\mathcal I}(i_2(P_t(x)))\rVert^2\leq \frac 1 \nu\sum_{i\in\mathcal I}\lVert i_2(d_i(P_t(x)))\rVert^2.
\end{equation*}
By \ref{(A4)}, \ref{(A5)} and interpolation,
\begin{align*}
\lVert i_2(d_i(P_t(x)))\rVert&\leq e^{-\mu t}\lVert i_2(P_t(d_i(x)))\rVert\\
&\leq  e^{-\mu t}\lVert i_{p(t)}(d_i(x))\rVert\\
&\leq e^{-\mu t}\lVert i_1(d_i(x))\rVert^{q(t)}\lVert i_2(d_i(x))\rVert^{1-q(t)}.
\end{align*}
Therefore,
\begin{align*}
\lVert (\mathrm{id}-E_{\mathcal I})(i_2(P_t(x)))\rVert^2&\leq \frac{e^{-\mu t}}{\nu} \sum_{i\in\mathcal I} \lVert i_1(d_i(x))\rVert^{2q(t)}\lVert i_2(d_i(x))\rVert^{2(1-q(t))}\\
&\leq \frac{e^{-\mu t}}{\nu}\left(\sum_{i\in\mathcal I}\lVert i_1(d_i(x))\rVert^2\right)^{q(t)}\left(\sum_{i\in\mathcal I}\lVert i_2(d_i(x))\rVert^2\right)^{1-q(t)}\\
&\leq \frac{e^{-\mu t}}{\nu}\left(\sum_{i\in\mathcal I} \eta\Inf^1_i(x)\right)^{q(t)}\left(\sum_{j\in\mathcal J}\Inf^2_j(x)\right)^{1-q(t)}\\
&=\frac{e^{-\mu t}}{\nu}(\eta \Inf^1(x))^{q(t)}(\Inf^2(x))^{1-q(t)}.\qedhere
\end{align*}
\end{proof}

\begin{lemma}\label{lem:elem_spectral_th}
If $A$ is a positive self-adjoint operator on a Hilbert space $\mathcal H$, then
\begin{equation*}
\lVert \xi-e^{-tA}\xi\rVert_{\mathcal H}^2\leq t \lVert A^{1/2}\xi\rVert_{\mathcal H}^2
\end{equation*}
for every $\xi\in D(A^{1/2})$ and $t\geq 0$.
\end{lemma}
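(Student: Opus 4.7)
The plan is to reduce this operator statement to an elementary scalar inequality via the spectral theorem. Since $A$ is positive self-adjoint, functional calculus gives a projection-valued measure $(E_\lambda)_{\lambda\geq 0}$ with $A=\int_0^\infty \lambda\,dE_\lambda$. For fixed $\xi\in D(A^{1/2})$, let $\mu_\xi$ denote the associated scalar spectral measure, characterized by $\langle F(A)\xi,\xi\rangle=\int_0^\infty F(\lambda)\,d\mu_\xi(\lambda)$ for any bounded Borel $F$. The condition $\xi\in D(A^{1/2})$ amounts to $\int_0^\infty \lambda\,d\mu_\xi(\lambda)<\infty$, and this integral equals $\lVert A^{1/2}\xi\rVert_{\mathcal H}^2$.

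Next I would rewrite both sides of the claimed inequality as integrals against $\mu_\xi$. Applying functional calculus to the bounded function $\lambda\mapsto 1-e^{-t\lambda}$ yields
\[
\lVert \xi-e^{-tA}\xi\rVert_{\mathcal H}^2=\int_0^\infty (1-e^{-t\lambda})^2\,d\mu_\xi(\lambda),
\]
while by definition
\[
t\lVert A^{1/2}\xi\rVert_{\mathcal H}^2=\int_0^\infty t\lambda\,d\mu_\xi(\lambda).
\]
So it is enough to establish the pointwise scalar inequality $(1-e^{-s})^2\leq s$ for all $s\geq 0$, applied with $s=t\lambda$, and then integrate against $d\mu_\xi$.

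For the scalar inequality, I would argue by cases: if $s\leq 1$, then $1-e^{-s}\leq s\leq 1$, so $(1-e^{-s})^2\leq s^2\leq s$; if $s\geq 1$, then $(1-e^{-s})^2\leq 1\leq s$. (Alternatively, one differentiates $s-(1-e^{-s})^2$ and checks positivity on $[0,\infty)$.) Combining this with the two integral representations finishes the proof.

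There is no serious obstacle here: the lemma is a routine consequence of the spectral theorem, and the only subtle point is verifying that all relevant integrals are finite, which is precisely guaranteed by the hypothesis $\xi\in D(A^{1/2})$.
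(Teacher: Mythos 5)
Your argument is correct and follows essentially the same route as the paper: the paper's proof likewise reduces the statement via the spectral theorem to the scalar inequality $(1-e^{-tx})^2\leq tx$ for $t,x\geq 0$, which you verify explicitly. Your added detail on the spectral measure and the case analysis for the scalar bound is a fuller write-up of the same idea, not a different proof.
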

\begin{proof}
This follows by the spectral theorem from the scalar inequality $(1-e^{-tx})^2\leq tx$ for $t,x\geq 0$. 
\end{proof}
A version of Friedgut's junta theorem in this setting now reads as follows. Again, the proof can be found in the appendix.

\begin{theorem}\label{thm:Friedgut_general_vN}
Let $(P_t)$ be a KMS-symmetric QMS on $\mathcal M$ satisfying \ref{(A0)}, \ref{(A2)}, \ref{(A4)}--\ref{(A6)}. There exists a constant $C>0$ depending only on $\alpha$ and $\nu$ such that for every $x\in \mathcal A$ and $0<\epsilon\leq 2/\nu$ there exists a set $\mathcal I\subset\mathcal J$ such that $\lVert i_2(x)-E_{\mathcal I}(i_2(x))\rVert\leq \epsilon$ and
\begin{equation*}
\lvert \mathcal J\setminus \mathcal I\rvert\leq \begin{cases}\Inf^1(x)^2 \exp\left(C\mu_-+\frac{C \Inf^2(x)}{\epsilon^2}\log \frac{2\Inf^2(x)}{\nu\epsilon}\right)&\text{if }\Inf^2(x)\geq 1,\\\frac{\Inf^1(x)^2}{\Inf^2(x)}\exp\left(C\mu_- +\frac{C \Inf^2(x)}{\epsilon^2}\log\frac{2\sqrt{\Inf^2(x)}}{\nu\epsilon}\right)&\text{otherwise},\end{cases}
\end{equation*}
where $\mu_-=-\mu$ if $\mu<0$, and $0$ otherwise. 
\end{theorem}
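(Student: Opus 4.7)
The plan is to adapt the proof of Theorem \ref{thm:L1friedgut} to the abstract setting, substituting Lemmas \ref{lem:cor_Poincare} and \ref{lem:elem_spectral_th} for the concrete Lemmas \ref{techlemma} and \ref{techlemma2} used there, and replacing the conditional expectation $2^{-|T|}\tr_T$ by the abstract orthogonal projection $E_{\mathcal I}$. Fix a threshold $\eta > 0$ to be determined, and set $\mathcal I := \{j\in\mathcal J : \Inf^1_j(x)\le \eta\}$; then automatically $|\mathcal J\setminus\mathcal I|\le \Inf^1(x)/\eta$, so the task reduces to choosing $\eta$ (and an auxiliary time $t>0$) so that $\|i_2(x)-E_{\mathcal I}(i_2(x))\|\le \epsilon$, while keeping $1/\eta$ as small as possible.

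Using that $E_{\mathcal I}$ is an orthogonal projection and hence a contraction, the triangle inequality gives
\begin{equation*}
\|i_2(x)-E_{\mathcal I}(i_2(x))\|\le 2\|i_2(x)-i_2(P_t x)\|+\|(\mathrm{id}-E_{\mathcal I})(i_2(P_t x))\|.
\end{equation*}
Lemma \ref{lem:elem_spectral_th} applied to the positive self-adjoint operator $\L_2$ on $L^2(\mathcal M,\phi)$ together with \eqref{(A2-1)} bounds the first error by $\|i_2(x)-i_2(P_t x)\|^2\le t\Inf^2(x)$, while Lemma \ref{lem:cor_Poincare} bounds the second by $\frac{e^{-\mu t}}{\nu}(\eta\Inf^1(x))^{q(t)}(\Inf^2(x))^{1-q(t)}$, where $q(t)=(1-e^{-2\alpha t})/(1+e^{-2\alpha t})$.

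Now I optimize: pick $t=\epsilon^2/(16\Inf^2(x))$ to make $2\sqrt{t\Inf^2(x)}=\epsilon/2$, and solve for the largest $\eta$ making the second term $\le \epsilon/2$, namely
\begin{equation*}
\eta=\frac{1}{\Inf^1(x)}\Bigl(\tfrac{\sqrt{\nu}\epsilon}{2}e^{\mu t/2}\Bigr)^{2/q(t)}\Inf^2(x)^{-(1-q(t))/q(t)}.
\end{equation*}
Taking logarithms in $|\mathcal J\setminus\mathcal I|\le \Inf^1(x)/\eta$ and invoking the elementary bounds $2/q(t)\le (e^{2\alpha t}+1)/(\alpha t)$ and $(1-q(t))/q(t)=2/(e^{2\alpha t}-1)\le 1/(\alpha t)$ used in the proof of Theorem \ref{thm:L1friedgut} (both coming from $e^{2\alpha t}-1\ge 2\alpha t$) yields the stated form of the bound, with the $\mu^-$ contribution arising from the factor $e^{-\mu t/q(t)}$ and being controlled by $t/q(t)\le 2/\alpha$ (valid once $\alpha t$ is bounded). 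In the regime $\Inf^2(x)\ge 1$ and $\epsilon\le 2/\nu$, the choice of $t$ keeps $\alpha t$ bounded in terms of $\alpha,\nu$, delivering the first branch of the bound after absorbing constants depending on $\alpha,\nu$ into $C$. The case $\Inf^2(x)<1$ is handled by rescaling $x\mapsto x/\sqrt{\Inf^2(x)}$, which leaves $\mathcal I$ unchanged up to the corresponding rescaling of $\eta$ and reduces to the first case with $\Inf^2$ replaced by $1$ and $\Inf^1(x)$ by $\Inf^1(x)/\sqrt{\Inf^2(x)}$, producing the second branch.

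The main obstacle I anticipate is bookkeeping: tracking the constants through the optimization, confirming that $\alpha t$ remains in a range where the elementary inequalities give useful bounds (both in the $\Inf^2(x)\ge 1$ regime and after the rescaling step), and correctly absorbing the various $\alpha,\nu,\mu^-$-dependent prefactors into the single constant $C$ in the statement. Apart from this constant-chasing, the argument is a direct transcription of the quantum-hypercube proof of Theorem \ref{thm:L1friedgut}, with the projection $E_{\mathcal I}$ playing the role of the partial trace.
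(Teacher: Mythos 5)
Your proposal is correct and is essentially the same argument as the paper's proof: the same set $\mathcal I=\{j:\Inf^1_j(x)\le\eta\}$, the same triangle-inequality decomposition through $P_t$ using contractivity of $E_{\mathcal I}$, the same two inputs (Lemma \ref{lem:cor_Poincare} and Lemma \ref{lem:elem_spectral_th} applied to $\L_2$ via \eqref{(A2-1)}), the same choice $t=\epsilon^2/(16\Inf^2(x))$ with $\eta$ chosen to make the second term at most $\epsilon/2$, the same elementary bounds $2/q(t)\le(e^{2\alpha t}+1)/(\alpha t)$ and $(1-q(t))/q(t)\le 1/(\alpha t)$, and the same rescaling $x\mapsto x/\sqrt{\Inf^2(x)}$ for the case $\Inf^2(x)<1$. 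The only deviations are in constant bookkeeping (your $\eta$ carries $\sqrt{\nu}\,e^{\mu t/2}$ from taking the square root in Lemma \ref{lem:cor_Poincare}, and the bound $t/q(t)\le 2/\alpha$ should rather be $t/q(t)\le e^{2\alpha t}/\alpha$ with $\alpha t\le\alpha/(4\nu^2)$, or simply $\mu^- t\cdot\tfrac{2}{q(t)}\le C\mu^-$ as in the paper), all of which get absorbed into the constant $C(\alpha,\nu)$ exactly as you indicate.
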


\section{Examples}\label{sec:examples}

\subsection{Classical case}

The results in \cite{cel12talagrand,bouyrie2017unified} fit into our framework by choosing the commutative von Neumann algebras, i.e. $(\mathcal M,\varphi)=L^\infty(X,\mu)$ with $X$ a probability measure space. 

\subsection{Generalized depolarizing semigroups}We start with a simple weighted generalization of the depolarizing semigroup, also known as \textit{generalized depolarizing}: given a full-rank state $\omega$ over $\mathbb{C}^{d}$,  
\begin{align*}
	e^{t\mathcal{L}_\omega}=\big(e^{-t}\operatorname{id}+(1-e^{-t})\,\tr(\omega \,\cdot)\un\big)^{\otimes n}\,.
\end{align*}
We verify assumptions \ref{(A0)}-\ref{(A5)} for the semigroup $(e^{t\mathcal{L}_\omega})_{t\ge 0}$ . First of all, since we are in a finite dimensional case, \ref{(A0)} is directly satisfied. \ref{(A1)} was proved in \cite{junge2015noncommutative} with $K=\frac{1}{2}$. With $d_j(x)=\mathrm{id}^{\otimes(j-1)}\otimes (\mathrm{id}-\tr(\omega\cdot)\un)\otimes \mathrm{id}^{\otimes(n-j)}$ a direct computation shows
\begin{equation*}
	\langle x,\L_\omega(x)\rangle_\omega=\sum_{j=1}^n \langle d_j(x),d_j(x)\rangle_\omega,
\end{equation*}
which settles \eqref{(A2-1)}. Condition \eqref{(A2-2)} with $M=\sqrt{2}$ follows as in Equation \eqref{ineq:claim_bis}. Condition \ref{(A3)} with $\lambda=1$ is easy to check for $n=1$, one for arbitrary $n$ follows by tensorization. The best constant $\alpha$ satisfying \ref{(A4)} for any $n$ has been shown in \cite[Theorems 24 \& 25]{beigi2020quantum},  whereas a lower bound on $\alpha$ was found e.g. in \cite[Theorem 9]{temme2014hypercontractivity}. A direct computation shows \ref{(A5)} with $\mu=1$.

\subsection{Quantum Ornstein--Uhlenbeck semigroup}

Next, we consider the generator of the so-called quantum Ornstein--Uhlenbeck semigroup \cite{fagnola1994quantum,cipriani2000spectral,carbone2008hypercontractivity}. The latter acts on the algebra $B(\ell^2(\mathbb{N}))$ of all bounded operators on the Hilbert space $\ell^2(\mathbb{N})$ of square-summable sequences.  Denoting by $a$ and $a^*$ the annihilation and creation operators of the quantum harmonic oscillator, which are defined by their action on a given orthonormal basis $\{|k\rangle\}_{k\in\mathbb{N}}$ of $\mathcal{H}\equiv \ell^2(\mathbb{N})\simeq L^2(\mathbb{R})$ as follows:
\begin{align*}
	a^*|k\rangle=\sqrt{k+1}|k\rangle\,,\qquad\text{ and }\qquad a|k\rangle=\left\{\begin{aligned}
		&\sqrt{k}|k-1\rangle& k\ge 1\\
		&0 &\text{ else}\, ,\end{aligned}\right.
\end{align*}
the generator of the quantum Ornstein--Uhlenbeck semigroup takes the following form at least on finite rank operators:
\begin{align*}
	\mathcal{L}(x)=\frac{\mu^2}{2}\,(a^*ax-2a^*xa+xa^*a)+\frac{\lambda^2}{2}\,(aa^*x-2axa^*+xaa^*)\,,
\end{align*}
where $\mu>\lambda>0$. Denoting $\nu=\lambda^2/\mu^2$, it has a unique invariant state
\begin{align*}
	\sigma_{\mu,\nu}:=(1-\nu)\,\sum_{n\ge 0}\,\nu^n\,|n\rangle\langle n|\,.
\end{align*}
Here we will use the notion of a Schwartz operator \cite{keyl2016schwartz}: an operator $x\in B(L^2(\mathbb{R}))$ is called a Schwartz operator if for any indices $\alpha,\beta,\alpha',\beta'\in\mathbb{N}$, 
\[\sup\big\{|\langle P^\beta Q^\alpha \psi,\,x P^{\beta'}Q^{\alpha'}\varphi\rangle|:\, \psi,\varphi\in\mathfrak{S}(\mathbb{R}),\,\|\psi\|,\|\varphi\|\le 1 \big\}<\infty\,,\]
where $\mathfrak{S}(\mathbb{R})$ denotes the set of Schwartz functions over $\mathbb{R}$, $Q:(x\mapsto\psi(x))\mapsto (x\mapsto x\psi(x))$ is the so-called position operator and $P:(x\mapsto \psi(x))\mapsto (x\mapsto -i\psi'(x))$ is the momentum operator. We denote by $\mathfrak{S}(\mathcal{H})$ the algebra of Schwartz operators.

\begin{proposition}
	The semigroup generated by $\mathcal{L}$ and derivations $d_a:=[a,\cdot]$ and $d_{a*}=[a^*,\cdot]$ satisfy the conditions \ref{(A0)}-\ref{(A5)} with respect to the algebra $\mathcal{A}\equiv \mathfrak{S}(\mathcal{H})$.
\end{proposition}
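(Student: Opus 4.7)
The plan is to verify the six conditions \ref{(A0)}--\ref{(A5)} in sequence, exploiting the explicit Lindblad form of $\mathcal{L}$ together with the canonical commutation relation $[a,a^\ast]=1$. Condition \ref{(A0)} is straightforward: Schwartz operators form a $\ast$-subalgebra of $B(\mathcal{H})$ by \cite{keyl2016schwartz}; $\mathfrak{S}(\mathcal{H})$ contains the finite-rank operators $|k\rangle\langle \ell|$, hence is $\sigma$-weakly dense in $B(\mathcal{H})$; and since $\mathcal{L}$ is a second-order polynomial in $a,a^\ast$ and left and right multiplication by $a,a^\ast$ preserves $\mathfrak{S}(\mathcal{H})$, one obtains $\mathfrak{S}(\mathcal{H})\subset D(\mathcal{L})$ together with the invariance of $\mathfrak{S}(\mathcal{H})$ under $P_t$.

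The algebraic core is the intertwining \ref{(A5)}. From the Lindblad form of $\mathcal{L}$ I would first derive the pseudo-Leibniz identity
\begin{equation*}
\mathcal{L}(yz)-\mathcal{L}(y)z-y\mathcal{L}(z)=\mu^2\,[a^\ast,y][a,z]+\lambda^2\,[a,y][a^\ast,z],
\end{equation*}
and then specialize to $y\in\{a,a^\ast\}$. Using $[a,a^\ast]=1$ and the eigenvalue relations $\mathcal{L}(a)=Ka$, $\mathcal{L}(a^\ast)=Ka^\ast$ (with $K:=(\mu^2-\lambda^2)/2>0$), a direct computation yields the commutator identities
\begin{equation*}
\mathcal{L}([a,x])=[a,\mathcal{L}(x)]-K[a,x],\qquad \mathcal{L}([a^\ast,x])=[a^\ast,\mathcal{L}(x)]-K[a^\ast,x].
\end{equation*}
Integrating gives the operator equalities $d_a P_t=e^{-Kt}P_t d_a$ and $d_{a^\ast}P_t=e^{-Kt}P_t d_{a^\ast}$, which imply \ref{(A5)} on all Kosaki $L^p$-spaces with constant $K$. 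Once this is in hand, the gradient estimate \ref{(A1)} follows by applying the Kadison--Schwarz inequality componentwise to $\Gamma(x)=\tfrac{\mu^2}{2}[a,x]^\ast[a,x]+\tfrac{\lambda^2}{2}[a^\ast,x]^\ast[a^\ast,x]$, producing $\Gamma(P_t x)\le e^{-2Kt}P_t\Gamma(x)$ with the same $K$.

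For \ref{(A2)} I would use the weighted derivations $d_1:=\mu[a,\cdot]$ and $d_2:=\lambda[a^\ast,\cdot]$. Condition \ref{(A2-1)} then follows from a KMS computation: the KMS-symmetry of $P_t$ with respect to $\sigma$ together with the modular relation $a\sigma=\nu\sigma a$ (and its adjoint) reduces the identity to a cyclic trace manipulation, in the same spirit as for the generalized depolarizing semigroup treated above. Condition \ref{(A2-2)} is immediate with $M=\sqrt{2}/\min(\mu,\lambda)$ from the explicit form of $\Gamma$, since each summand of $\Gamma(x)$ dominates one of $\|d_j x\|^2$ up to that constant. Condition \ref{(A3)} follows from \ref{(A1)} with $K>0$ by the integrated Bakry--\'Emery argument, which also yields the spectral gap $\lambda\ge 2K$. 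Finally, \ref{(A4)} is the deepest input: the hypercontractivity of the quantum Ornstein--Uhlenbeck semigroup was established in \cite{carbone2008hypercontractivity}, and the constant $\alpha$ can be read off from their estimates.

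The main obstacle is \ref{(A4)}: unlike \ref{(A0)}--\ref{(A3)} and \ref{(A5)}, it cannot be extracted from the commutation structure alone and relies on the fine analysis of \cite{carbone2008hypercontractivity}. A secondary technicality is the careful handling of unbounded operators on $\mathfrak{S}(\mathcal{H})$: the polynomial-seminorm definition of Schwartz operators is precisely designed to keep all products, commutators and semigroup images well-defined and inside $\mathfrak{S}(\mathcal{H})$, so no extra closure arguments are needed once the algebraic identities above are established on $\mathfrak{S}(\mathcal{H})$.
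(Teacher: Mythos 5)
Much of your proposal is sound, and in places more explicit than the paper itself: the pseudo-Leibniz identity for the Lindbladian, the eigenvalue relations $\mathcal L(a)=Ka$, $\mathcal L(a^\ast)=Ka^\ast$, the commutator identities giving $d_aP_t=e^{-Kt}P_td_a$ and $d_{a^\ast}P_t=e^{-Kt}P_td_{a^\ast}$, and the derivation of \ref{(A1)} from $\Gamma(x)=\tfrac{\mu^2}{2}[a,x]^\ast[a,x]+\tfrac{\lambda^2}{2}[a^\ast,x]^\ast[a^\ast,x]$ via Kadison--Schwarz all check out (the paper simply cites \cite{cm2017gradient} for these). Your route to \ref{(A3)} via the integrated Bakry--\'Emery argument is also legitimate, whereas the paper cites the spectral analysis of \cite{cipriani2000spectral}; note only that this argument yields a gap of order $K$, not $2K$, and that your weighted derivations for \ref{(A2-1)} differ from the unweighted $d_a,d_{a^\ast}$ named in the statement (the paper is equally vague here, so this is a matter of bookkeeping rather than a flaw).

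The genuine gap is in \ref{(A0)}, at exactly the point where the paper does its real work. You infer invariance of $\mathfrak{S}(\mathcal H)$ under $P_t$ from the facts that $\mathcal L$ is a polynomial in $a,a^\ast$ and that left/right multiplication by $a,a^\ast$ preserves $\mathfrak{S}(\mathcal H)$. Invariance of a subspace under the generator does not imply invariance under the semigroup, and this very example shows why the inference fails: the finite-rank Schwartz operators are preserved by $\mathcal L$ (left and right multiplication by $a,a^\ast$ keep the rank finite), yet $P_t$ of a finite-rank operator is generically of infinite rank, since the channel mixes in a thermal environment. So the semigroup invariance of $\mathfrak{S}(\mathcal H)$ needs a separate argument. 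The paper supplies it by passing to characteristic functions: using the quantum Plancherel identity and the beam-splitter representation of $e^{t\mathcal L}$, which acts on characteristic functions by $\chi_x(z)\mapsto\chi_x(\sqrt\eta z)e^{-(2N+1)(1-\eta)|z|^2/2}$, one checks that the Schwartz seminorms $\sup|\langle P^\beta Q^\alpha\psi,\,P_t(x)\,P^{\beta'}Q^{\alpha'}\varphi\rangle|$ remain finite because the relevant integrand is a Schwartz function (Propositions 3.14 and 3.18 of \cite{keyl2016schwartz} are also invoked for the domain/closability issues). Your closing remark that ``no extra closure arguments are needed'' dismisses precisely this step; without it, the verification of \ref{(A0)} is incomplete.
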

\begin{proof}
	The set $\mathfrak{S}(\mathcal{H})$ of Schwartz operators is a $*$-subalgebra of $B(L^2(\mathbb{R}))$ \cite[Lemma 3.5]{keyl2016schwartz}. Moreover for any $p\ge 1$, the set $\mathfrak{S}_0(\mathcal{H})$ of finite-rank Schwartz operators is dense in the space $\mathcal{T}_p(\mathcal{H})$ of  Schatten-$p$ operators \cite[Lemma 2.5]{keyl2016schwartz}. Therefore, since finite-rank operators are $\sigma$-weakly dense in $B(\mathcal{H})$, this also holds for $\mathfrak{S}(\mathcal{H})$. In order to show that $\mathfrak{S}(\mathcal{H})$ is invariant with respect to the semigroup generated by $\mathcal{L}$, we use tools from noncommutative Fourier analysis: given a trace-class operator $x$, its characteristic function is given by 
	\begin{align*}
		\chi_x(z):= \tr(x D(z))\,,
	\end{align*}
	where $D(z):=e^{za^*-\bar{z}a}$, for $z\in\mathbb{C}$, is the so-called one-mode displacement operator. By the quantum Plancherel identity, we have that for any two trace-class operators $x,y$ \cite{holevo2011probabilistic},
	\begin{align*}
		\tr(x^* y)=\int \frac{d^2z}{\pi}\, \overline{\chi_x(z)}\,\chi_y(z)\,.
	\end{align*}
	Moreover, the quantum Ornstein--Uhlenbeck semigroup can be represented by a family of quantum channels $e^{t\mathcal{L}}$ modelling a quantum beam-splitter of transmissivity $\eta=e^{-(\mu^2-\lambda^2)t}$ and with environment state $\sigma_{\mu,\nu}$ can be shown to induce the following action on characteristic functions:
	\begin{align*}
		\chi_x(z)\longrightarrow \chi_{e^{t\mathcal{L}^*}(x)}(z)= \chi_x(\sqrt{\eta}\,z)\,\chi_{\sigma_{\mu,\nu}}(\sqrt{1-\eta}\,z)=\chi_x(\sqrt{\eta}\,z)\,e^{-(2N+1)(1-\eta)|z|^2/2}\,,
	\end{align*}
	with $N=\frac{\lambda^2}{\mu^2-\lambda^2}$. Therefore, for any $\alpha,\beta,\alpha',\beta'\in\mathbb{N}$ and normalized $\psi,\varphi\in\mathfrak{S}(\mathbb{R})$, denoting $y:=|P^{\beta'}Q^{\alpha'}\varphi\rangle\langle P^\beta Q^\alpha \psi|$, we have
	\begin{align*}
		\langle P^\beta Q^\alpha \psi,\,e^{t\mathcal{L}}(T)\,P^{\beta'}Q^{\alpha'}\varphi\rangle&=\tr(y\,e^{t\mathcal{L}}(T)) \\
		&=\tr(e^{t\mathcal{L}^*}(y)T)\\
		&=\int\,\frac{d^2z}{\pi} \,\chi_y(\sqrt{\eta}z)\,e^{-(2N+1)(1-\eta)|z|^2/2}\,\chi_T(z)\\
		&=\int\,\frac{d^2u}{\eta\pi} \,\chi_y(u)\,e^{-(2N+1)(1-\eta)|u|^2/2\eta}\,\chi_T(u/\sqrt{\eta})\\
		&<\infty\,,
	\end{align*}
	where we used that $u\mapsto e^{-(2N+1)(1-\eta)|u^2|/2\eta}\chi_T(u/\sqrt{\eta})$ is a Schwartz function, see \cite[Proposition 3.18]{keyl2016schwartz}. Finally, by \cite[Proposition 3.14]{keyl2016schwartz}, for any $x\in\mathfrak{S}(\mathcal{H})$, $\mathcal{L}(x)$ is closable with closure in $\mathfrak{S}(\mathcal{H})$. Hence, \ref{(A0)} is satisfied for the algebra $\mathcal{A}\equiv \mathfrak{S}(\mathcal{H})$ .  Property \ref{(A1)} can be easily derived from the canonical commutation relation $[a,a^*]=\mathbb{I}$ and gives $K=(\mu^2-\lambda^2)/2$ (see e.g. \cite{cm2017gradient}). Property \ref{(A2)} is satisfied for the maps $d_a:=[a,\cdot]$ and $d_{a^*}:=[a^*,\cdot]$. The Poincar\'{e} inequality \ref{(A3)} follows from the characterization of the spectrum of the generator $\mathcal{L}$ established in \cite{cipriani2000spectral}. The hypercontractivity constant in \ref{(A4)} was estimated in \cite{carbone2008hypercontractivity}. The intertwining relation of \ref{(A5)} was found in \cite{cm2017gradient}. 
\end{proof}

%

\subsection{Group von Neumann algebras}
Let $G$ be a countable discrete group with unit $e$, $L(G)$ the group von Neumann algebra on $\ell^2(G)$ generated by $\{\lambda_g,g\in G\}$ where $\lambda$ is the left regular representation of $G$. We denote by $\tau(x)=\langle x\delta_e,\delta_e\rangle$ the canonical tracial faithful state. Here and in what follows, $\delta_g$ always denotes the function on $G$ that takes value 1 at $g$ and vanishes elsewhere.
%

A function $\psi\colon G\to [0,\infty)$ is a \emph{conditionally negative definite} (cnd) length function if $\psi(e)=0$, $\psi(g^{-1})=\psi(g)$ and 
\begin{equation*}
	\sum_{g,h\in G}\overline{f(g)}f(h)\psi(g^{-1}h)\leq 0
\end{equation*}
for every $f\colon G\to \com$ with finite support such that $\sum_{g\in G} f(g)=0$.

By Schoenberg's Theorem (see for example \cite[Theorem D.11]{BO08}), to every cnd function one can associate a $\tau$-symmetric quantum Markov semigroup on $L(G)$ given by
\begin{equation*}
	P_t \lambda_g=e^{-t\psi(g)}\lambda_g.
\end{equation*}

For a countable discrete group $G$, a {\em $1$-cocycle} is a triple $(H,\pi,b)$, where $H$ is a \emph{real} Hilbert space, $\pi\colon G\to O(H)$ is an orthogonal representation, and $b\colon G\to H$ satisfies the cocycle law: $b(gh)=b(g)+\pi(g)b(h),g,h\in G.$ To any cnd function $\psi$ on a countable discrete group $G$, one can associate with a $1$-cocycle  $(H,\pi,b)$ such that $\psi(g^{-1}h)=\|b(g)-b(h)\|^2,g,h\in G$. See \cite[Appendix D]{BO08} for more information.

Fix an orthonormal basis $(e_j)_{j\in\mathcal J}$ of $H$. In case $G$ is finite, the index set $\mathcal J$ can always be taken to be finite. Let $\mathcal A$ be the linear span of the operator $\lambda_g$, $g\in G$, and let
\begin{equation*}
d_j\colon \mathcal A\to L(G),\,d_j(\lambda_g)=\langle b(g),e_j\rangle \lambda_g.
\end{equation*}
The space $\mathcal A$ is contained in the domain of the generator $\mathcal L$ of $(P_t)$ and $\L(\lambda_g)=\psi(g)\lambda_g$ for $g\in G$. Moreover, $\Gamma(\lambda_g,\lambda_h)=\langle b(g),b(h)\rangle \lambda_{g^{-1}h}$ for $g,h\in G$.

Clearly, condition \ref{(A0)} is satisfied. Condition \ref{(A1)} is satisfied with $K=0$ \cite[Example 3.14]{WZ20}. For condition \ref{(A2)} note that if $x=\sum_g f(g)\lambda_g\in\mathcal A$, then
\begin{align*}
\sum_{j\in\mathcal J}d_j(x)^\ast d_j(x)&=\sum_{j\in\mathcal J}\sum_{g,h\in G}\overline{f(g)}f(h)d_j(\lambda_g)^\ast d_j(\lambda_h)\\
&=\sum_{j\in\mathcal J}\overline{f(g)}f(h)\sum_{g,h\in G}\langle b(g),e_j\rangle\langle b(h),e_j\rangle \lambda_{g^{-1}h}\\
&=\sum_{g,h\in G}\overline{f(g)}f(h)\langle b(g),b(h)\rangle \lambda_{g^{-1}h}\\
&=\sum_{g,h\in G}\overline{f(g)}f(h)\Gamma(\lambda_g,\lambda_h)\\
&=\Gamma(x).
\end{align*}
In particular, $d_j(x)^\ast d_j(x)\leq \Gamma(x)$ for every $j\in\mathcal J$. Moreover,
\begin{equation*}
\sum_{j\in\mathcal J}\lVert i_2(d_j(x))\rVert^2=\sum_{j\in J}\tau(d_j(x)^\ast d_j(x))=\tau(\Gamma(x))=\langle i_2(x),i_2(\mathcal L(x))\rangle.
\end{equation*}
Thus condition \ref{(A2)} holds with $M=1$. Condition \ref{(A3)} holds with the spectral gap $\lambda=\inf_{g:\psi(g) >0} \psi(g)$ of $\mathcal L$. Since $d_j P_t=P_t d_j$, condition \ref{(A5)} is always satisfied with $\mu=0$.

Condition \ref{(A4)} is known to hold for certain discrete groups. For free groups, it is known that \ref{(A4)} holds with $\alpha=2$ \cite[Theorem A]{junge2015hypercontractivity}. We refer to \cite{junge2013hypercontractivity} for more examples including triangular groups, finite cyclic groups $\mathbb{Z}_N, N\ge 6$, infinite Coxeter groups etc. with $0<\alpha<\infty$.


\section{Applications}\label{sec:applications}

\subsection{Influence and circuit complexity lower bounds}

As mentioned in the introduction, Karpovsky \cite{Karpovsky76} was the first to propose the total influence, as a measure of complexity of a function $f$. This intuition was then made rigorous in \cite{linial1993constant} and \cite{boppana1997average} where  tight circuit complexity lower bounds in terms of the total influence were derived for the complexity class $\operatorname{AC}^0$ of constant depth circuits.

Similar results were recently derived in the quantum setting. For instance, \cite{bu2022complexity} show a direct link between the notion of $L^2$-influence and the complexity of quantum circuits. More precisely, they showed that for a quantum circuit $U$, that is a unitary matrix in $M_2(\mathbb{C})^{\otimes n}$ \cite[Theorem 12]{bu2022complexity}
\begin{align*}
	\frac{1}{8}\,\operatorname{CiS}^2(U)\le \operatorname{Cost}(U)\,,
\end{align*}
where the $L^2$-circuit sensitivity $\operatorname{CiS}^2(U)$ is defined as
\begin{align*}
	\operatorname{CiS}^2(U):=\max_{\|O\|_2=1}\,\Big|\Inf^2(UOU^*)-\Inf^2(O)\Big|\,,
\end{align*}
and where $\operatorname{Cost}(U)$ refers to the cost of the circuit and was introduced in a series of seminal papers by Nielsen  and coauthors \cite{nielsen2005geometric,nielsen2006optimal,nielsen2006quantum,dowling2008geometry} as a lower bound on the minimal number of one and two-qubit gates required from a given universal gate-set in order to synthesize the unitary $U$. More precisely, given traceless self-adjoint operators $h_1,\dots ,h_m$ that are supported on $2$ qubits and normalized as $\|h_i\|=1$, the circuit cost of $U$ with respect to $h_1,\dots, h_m$ is defined as
\begin{align*}
	\operatorname{Cost}(U):=\inf\int_0^1\,\sum_{j=1}^m |r_j(s)|\,ds\,,
\end{align*}
where the infimum above is taken over all continuous functions $r_j:[0,1]\to \mathbb{R}$ satisfying
\begin{align*}
	U=\mathcal{P}\operatorname{exp}\Big(-i\int_0^1\,H(s)\,ds\Big)\,,
\end{align*}
and 
\begin{align*}
	H(s)=\sum_{j=1}^m\,r_j(s)\,h_j\,,
\end{align*}
where $\mathcal{P}$ denotes the path-ordering operator. We start by providing a simple bound on the $p$-influences for $p\in [1,2]$ (for convenience we may write $\frac{\tr_{N_j}}{2^{|N_j|}}(O)$ for $\frac{\tr_{N_j}}{2^{|N_j|}}(O)\otimes \un$):

\begin{proposition}
For any $j\in\{1,\dots, n\}$, let $N_j\subset\{1,\dots, m\}$ be the minimal set of qubits such that $\frac{\tr_j}{2}(U(\frac{\tr_{N_j}}{2^{|N_j|}}(O)\otimes  \un)U^*)=U(\frac{\tr_{N_j}}{2^{|N_j|}}(O)\otimes \un)U^*$ for any $O\in M_2(\mathbb{C})^{\otimes n}$.  Then, for any self-adjoint matrix $O\in M_2(\mathbb{C})^{\otimes n}$ with $\|O\|_2\le 1$ and all $p\in [1,2]$ we have
\begin{align*}
	\Inf^p(UOU^*)\le \sum_{i=1}^n\,\|d_i O\|_2^p\,\,|\{j:i\in N_j\}|\,.
\end{align*}
In the case $p=2$ and denoting $L:=\max_i\,|\{j:\,i\in N_j\}|$, we get
\begin{align*}
	\Inf^2(UOU^*)\le L\,\Inf^2(O)\,.
\end{align*}
\end{proposition}
\begin{proof}
	For $p\in[1,2]$ and any $O\in M_2(\mathbb{C})^{\otimes n}$ with $\|O\|_2\le 1$, we have
	\begin{align*}
		\Inf^p(UOU^*)&=\sum_{j=1}^n \|UOU^*-\frac{\tr_j}{2}(UOU^*)\|_p^p\\
		&\le \sum_{j=1}^n \|UOU^*-\frac{\tr_j}{2}(UOU^*)\|_2^p \\
		&\le \sum_{j=1}^n \|UOU^*-U\frac{\tr_{N_j}}{2^{|N_j|}}(O)U^*\|_2^p\\
		&= \sum_{j=1}^n \|O-\frac{\tr_{N_j}}{2^{|N_j|}}(O)\|_2^p\\
		&\le \sum_{j=1}^n\Big(\sum_{i\in N_j}\|O-\frac{\tr_{i}}{2}(O)\|_2^2\Big)^\frac{p}{2}\\
		&\le \sum_{j=1}^n\sum_{i\in N_j}\|O-\frac{\tr_i}{2}(O)\|_2^p\\
		&\le \sum_{i=1}^n \|O-\frac{\tr_i}{2}(O)\|_2^p\, |\{j:i\in N_j\}|
	\end{align*}
where in the second inequality above we use that the partial trace $\tr_j$ is a projection onto the algebra of operators supported on $\{j\}^c$, and therefore minimized the distance to that subalgebra. The third inequality follows from the non-primitive Poincar\'{e} inequality from Equation \eqref{poincareivan}.
\end{proof}

\begin{remark}\label{remcircuits}
	The assumption $\frac{\tr_j}{2}(U(\frac{\tr_{N_j}}{2^{|N_j|}}(O)\otimes\un)U^*)=U(\frac{\tr_{N_j}}{2^{|N_j|}}(O)\otimes  \un)U^*$ can be interpreted as a lightcone condition: let's consider for simplicity $n$ a unitary circuit in brickwork architecture of the form $U=U^{\ell} U^{\ell-1}\dots U^1$, where for each $j$, $U^{2j+1}=U_{1,2}^{2j+1}\otimes \dots \otimes U^{2j+1}_{n-1,n }$ and $U^{2j}=U_{2,3}^{2j}\otimes \dots \otimes U^{2j}_{n-2,n-1 }$, where by $U^j_{r,r+1}$ we mean a unitary with non-trivial support on qubits $r$ and $r+1$. Hence, for any set $N_1=\{1,\dots, n_1\}$ and any observable $O_{N_1^c}$ supported on $N_1^c$, 
	\begin{align*}
		UO_{N_1^c} U^*= U^\ell \dots U^1 O_{N_1^c}(U^1)^*\dots (U^\ell)^*
	\end{align*}
	is supported in the set $\{n_1-\ell+1,\dots, n\}$. Hence, for $n_1=\ell+1$, the condition holds. In other words, $n_1$ 
	scales linearly with the depth $\ell$ of the circuit $U$. The above simple argument generalizes easily to higher dimensions and general local unitary circuits. 
\end{remark}

In the case when $p=1$ we can bound the total $L^1$ output influence in terms of the total $L^1$ input influence.

\begin{proposition}
	For any $j\in\{1,\dots, n\}$, let $N_j\subset\{1,\dots, m\}$ be the minimal set of qubits such that $\frac{\tr_j}{2}(U\frac{\tr_{N_j}}{2^{|N_j|}}(O)U^*)=U\frac{\tr_{N_j}}{2^{|N_j|}}(O)U^*$ for any $O\in M_2(\mathbb{C})^{\otimes n}$ and denote $L:=\max_i\,|\{j:\,i\in N_j\}|$.  Then, for any such matrix $O\in M_2(\mathbb{C})^{\otimes n}$ 
	\begin{align*}
		\Inf^1(UOU^*)\le 2 L\,\Inf^1(O)\,.
	\end{align*}
\end{proposition}
\begin{proof}
	For any $O\in M_2(\mathbb{C})^{\otimes n}$,
	\begin{align*}
		\Inf^1(UOU^*)&=\sum_{j=1}^n \|UOU^*-\frac{\tr_j}{2}(UOU^*)\|_1\\
		&\le \sum_{j=1}^n \|UOU^*-U\frac{\tr_{N_j}}{2^{|N_j|}}(O)U^*\|_1+\|U\frac{\tr_{N_j}}{2^{|N_j|}}(O)U^*-\frac{\tr_j}{2}(UOU^*)\|_1\\
			&\le 2\,\sum_{j=1}^n \|O-\frac{\tr_{N_j}}{2^{|N_j|}}(O)\|_1\\
			&\le 2\sum_{j=1}^n\sum_{i\in N_j}\|d_i O\|_1\\
		&\le 2 L\,\Inf^1(O)
	\end{align*}
	where in the second inequality above we used the definition of $N_j$ and that the partial trace $\tr_j$ is contractive in $\|\cdot\|_1$ norm. The third inequality follows from simple triangle inequality and monotonicity of the $L^1$-norm under partial traces.
\end{proof}

Finally, we find a bound on the variation of the $L^1$-influence through a circuit $U$ in terms of the cost of a unitary $U$. We define the $L^1$-circuit sensitivity of a unitary $U\in M_2(\mathbb{C})^{\otimes n}$ as 
\begin{align*}
	\operatorname{CiS}^1(U):=\max_{\|O\|_1=1}\,\Big|\Inf^1(UOU^*)-\Inf^1(O)\Big|\,.
\end{align*}
\begin{theorem}
	The $L^1$-circuit sensitivity of a unitary $U\in M_2(\mathbb{C})^{\otimes n}$ is a lower bound on the circuit cost:
\begin{align*}
	\operatorname{CiS}^1(U)\le 8\,\operatorname{Cost}(U)\,.
\end{align*}
\end{theorem}

\begin{proof}
	Our proof follows similar steps to those leading to \cite[Theorem 12]{bu2022complexity} (see also \cite{eisert2021entangling,marien2016entanglement}): we first show that, for a unitary $U_t=e^{-itH}$, where $H$ acts non-trivially on a set $S$ of $k$ qubits, and $O$ with $\|O\|_1=1$, 
	\begin{align}
		|\Inf^1(U_tOU_t^*)-\Inf^1(O)|
		&=|\sum_{j=1}^n \|d_j(O(t))\|_1-\|d_j(O)\|_1|\nonumber\\
		&=|\sum_{j\in S}\,\|d_j(O(t))\|_1-\|d_j(O)\|_1|\nonumber\\
	&\le \sum_{j\in S}\,\|d_j(O(t))-d_j(O)\|_1\nonumber\\
		&\le \int_0^t\,\sum_{j\in S}\,\|d_j\,e^{-isH}[H,O]e^{isH}\|_1\,ds\nonumber\\
		&\le 2kt\, \|[H,O]\|_1\nonumber\\
		&\le 4kt \lVert H\rVert\lVert O\rVert_1\nonumber\\
		&= 4kt\,\|H\|\,,\label{localH}
	\end{align}
	where we denoted $O(t):=U_tOU_t^*$. Back to our original problem, we take a Trotter decomposition of $U$ such that for arbitrary small $\epsilon>0$, 
	\begin{align*}
		\|U-V_N\|\le \epsilon
	\end{align*}
where $V_N$ is defined as follows:
\begin{align*}
	&V_N:=\prod_{j=1}^N\,W_j\,,\\
	&W_\eta:=\operatorname{exp}\Big( -\frac{i}{N}\sum_{j=1}^m\,r_j\big(\frac{\eta}{N}\big)\,h_j\Big)\,,
\end{align*}	
so that
\begin{align*}
	&W_\eta=\lim_{l\to\infty}W_\eta^{(l)}\,,\\
	&W_\eta^{(l)}:=\Big(W_{\eta,1}^{\frac{1}{l}}\dots W_{\eta,l}^{\frac{1}{l}}\Big)^l\,,\\
	&W_{\eta,j}:=\operatorname{exp}\Big(-\frac{i}{N}\,r_j\big(\frac{\eta}{N}\big)\,h_j\Big)\,.
\end{align*}
Next, we define $O_\eta=W_\eta O_{\eta -1}W_\eta^*$ with $O_0=O$. We have,
\begin{align*}
	|\Inf^1(O_\eta)-\Inf^1(O_{\eta-1})|
	&=|\Inf^1(W_\eta O_{\eta-1}W_\eta^*)-\Inf^1(O_{\eta-1})|\\
	&=\limsup_{l\to\infty}\, |\Inf^1\big(W_\eta^{(l)}O_{\eta-1}W_\eta^{(l)\ast}\big)-\Inf^1(O_{\eta-1})|\\
	&\le \limsup_{l\to\infty}\frac{l}{N}\,\sum_{j=1}^m\,\frac{8}{l}\,\Big|r_j\Big(\frac{\eta}{N}\Big)\Big|\\
	&=\frac{8}{N}\,\sum_{j=1}^m\,\Big|r_j\Big(\frac{\eta}{N}\Big)\Big|\,,
\end{align*}
where the inequality follows from \eqref{localH} for $k=2$ and $t=\frac{1}{N}$. Summing over $\eta$, we get
\begin{align*}
	|\Inf^1(UOU^*)-\Inf^1(O)|
	\le \frac{8}{N}\,\sum_{\eta=1}^N\sum_{j=1}^m\,\Big|r_j\Big(\frac{\eta}{N}\Big)\Big|+	|\Inf^1(UOU^*)-	\Inf^1(V_NOV_N^*)|.
\end{align*}
Since the circuit cost is expressed as $\operatorname{Cost}(U)=\inf_{(r_j)_j} \lim_{N\to\infty}\frac{1}{N}\sum_{\eta=1}^N\sum_{j=1}^m\Big|r_j\Big(\frac{\eta}{N}\Big)\Big|$, and since the influence of $UOU^*$ can be arbirarily well approximated by that of $V_NOV_N^*$ as $N\to\infty$, the result follows.
\end{proof}

\begin{remark}
	Combined with our quantum Friedgut's Junta theorem \ref{thm:L1friedgut}  the above results show that for any observable $O$ with $\Inf^1(O),\Inf^2(O)=\mathcal{O}(1)$ and $\|O\|_2=\mathcal{O}(1)$, and for any unitary $U$ with $L=\mathcal{O}(1)$, the output observable $UOU^*$ can be well approximated by a $k$-junta with $k=\mathcal{O}(1)$. Taking again the simple example constructed in Remark \ref{remcircuits}, we recover the simple fact that, for a $1$-qubit Pauli matrix evolving according to a circuit of constant depth, the output observable will still be supported on a constant size region. While it would be interesting to find some non-trivial situations where our bounds still hold, we leave this question to future work. 
\end{remark}

\subsection{Learning quantum Boolean functions}\label{sec:learning}

In this section, we use our quantum Friedgut's Junta theorem \ref{thm:L1friedgut} to provide an efficient algorithm for learning quantum Boolean functions. Our argument relies on the following quantum generalization of Goldreich--Levin theorem (see  \cite[Theorem 7.6]{mo10quantumboolean}):
\begin{theorem}[quantum Goldreich--Levin]\label{quantumGoldreichLevin} Given an oracle access to a unitary operator $U$ on $n$ qubits and its adjoint $U^*$, and given $\delta,\gamma>0$, there is a $\operatorname{poly}\big(n,\frac{1}{\gamma}\big)\log\big(\frac{1}{\delta}\big)$-time algorithm which outputs a list $L=\{s_1,\dots, s_m\}$ such that with probability $1-\delta$: (1) if $|\widehat{U}_s|\ge \gamma$ then $s\in L$; and (2) for all $s\in L$, $|\widehat{U}_s|\ge \gamma/2$.
	
\end{theorem}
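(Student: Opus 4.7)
My plan is to exploit the Choi--Jamio\l kowski isomorphism to reduce the problem to i.i.d.\ sampling from the Pauli distribution $p_s:=|\widehat U_s|^2$. Let $|\Phi_n\rangle:=2^{-n/2}\sum_{i\in\{0,1\}^n}|i\rangle|i\rangle$ denote the maximally entangled state on $2n$ qubits. A direct computation using $\tr(\sigma_s\sigma_{s'})=2^n\delta_{s,s'}$ shows that $\{(\sigma_s\otimes\un)|\Phi_n\rangle\}_{s\in\{0,1,2,3\}^n}$ is an orthonormal basis of $\com^{2^n}\otimes \com^{2^n}$, and therefore
\begin{equation*}
|\psi_U\rangle:=(U\otimes\un)|\Phi_n\rangle=\sum_s \widehat U_s\,(\sigma_s\otimes\un)|\Phi_n\rangle
\end{equation*}
has Pauli-basis expansion coefficients $\widehat U_s$. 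Performing qubit-wise Bell measurements on $|\psi_U\rangle$ (which costs $O(n)$ Clifford gates) thus yields a random outcome $s$ distributed exactly as $p_s$, at a cost of one call to $U$ per sample.

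Given this primitive I would use a two-phase procedure. In \emph{phase one} I draw $N_1=O(\gamma^{-2}\log(\gamma^{-1}\delta^{-1}))$ independent samples and let $L'$ be the set of distinct outcomes observed. In \emph{phase two}, for each candidate $s\in L'$, I re-estimate $p_s$ by drawing $N_2=O(\gamma^{-4}\log(|L'|/\delta))$ fresh samples and recording the empirical frequency $\widehat p_s$; the output list is $L:=\{s\in L':\widehat p_s\ge \gamma^2/2\}$.

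For correctness, Parseval's identity gives $\sum_s p_s=2^{-n}\tr(U^*U)=1$, so at most $\gamma^{-2}$ strings satisfy $p_s\ge\gamma^2$. For each such $s$, the probability of missing it in phase one is $(1-\gamma^2)^{N_1}\le e^{-N_1\gamma^2}$; a union bound over the at-most $\gamma^{-2}$ relevant strings yields completeness with probability $\ge 1-\delta/2$ for our choice of $N_1$. For soundness, Hoeffding's inequality guarantees $|\widehat p_s-p_s|\le \gamma^2/4$ with probability $\ge 1-\delta/(2|L'|)$ for each fixed $s\in L'$; a union bound over $L'$ (of $\operatorname{poly}(1/\gamma,\log(1/\delta))$ size) then ensures that every $s\in L$ satisfies $p_s\ge \gamma^2/4$, i.e.\ $|\widehat U_s|\ge \gamma/2$, while every $s$ with $p_s\ge\gamma^2$ is retained. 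The total cost is $\operatorname{poly}(n,\gamma^{-1})\log(\delta^{-1})$ as claimed.

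The main conceptual point---and the reason the quantum problem is substantially simpler than the classical Goldreich--Levin theorem---is that the Pauli coefficients of $U$ appear literally as measurement probabilities of a physically preparable state, which bypasses entirely the tree-search with partial-Fourier-mass estimation that drives the classical algorithm. The one technical subtlety is that one cannot afford to union-bound over all $4^n$ Pauli strings; this is exactly why the two-phase structure is essential, as phase-two verification then runs only over the $\operatorname{poly}(1/\gamma,\log(1/\delta))$ candidates actually produced by phase one. Note also that the algorithm uses only forward calls to $U$ on the left factor of $|\Phi_n\rangle$, so neither a controlled-$U$ subroutine nor access to $U^*$ is required to meet the stated resource bound.
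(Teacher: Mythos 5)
Your argument is correct, but it is worth noting that the paper itself does not prove Theorem \ref{quantumGoldreichLevin}: it is imported verbatim from Montanaro--Osborne \cite[Theorem 26]{mo10quantumboolean}, whose proof adapts the classical Goldreich--Levin/Kushilevitz--Mansour strategy (in the spirit of Adcock--Cleve) of iteratively growing prefixes whose Fourier weight is estimated with queries to both $U$ and $U^*$. You instead observe that for a \emph{unitary} $U$ the Choi vector $(U\otimes\un)|\Phi_n\rangle$ is normalized because $\sum_s|\widehat U_s|^2=2^{-n}\tr(U^*U)=1$, so qubit-wise Bell measurement performs exact Pauli sampling from $p_s=|\widehat U_s|^2$, and the theorem reduces to a standard sample-then-verify argument (heavy strings appear among the samples; empirical frequencies accurate to $\gamma^2/4$ certify $|\widehat U_s|\ge\gamma/2$ and retain every $s$ with $|\widehat U_s|\ge\gamma$). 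This is a genuinely different and simpler route: it is non-adaptive, needs neither $U^*$ nor controlled-$U$, and avoids the recursive weight-estimation machinery; what the Kushilevitz--Mansour-style search buys in exchange is that it does not hinge on the exact normalization $\sum_s p_s=1$ specific to unitaries and extends more readily to settings where one can only estimate Fourier weights of prefix classes. Two small points of bookkeeping: completeness requires the heavy strings to also pass your phase-two threshold, which your $\gamma^2/4$ accuracy does ensure, so that is fine; and if you re-estimate each candidate with \emph{fresh} batches of size $N_2=O(\gamma^{-4}\log(|L'|/\delta))$ the total cost picks up a $\log^2(1/\delta)$ factor, formally exceeding the stated $\operatorname{poly}(n,1/\gamma)\log(1/\delta)$ bound -- this is repaired by estimating all $|L'|\le N_1$ candidates from a single shared batch of $O(\gamma^{-4}\log(N_1/\delta))$ samples and union bounding over the candidates.
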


Once the quantum Goldreich--Levin algorithm has been used to output a list of Fourier coefficients, the following lemma, which is also taken from \cite{mo10quantumboolean}, can be used to compute them:

\begin{lemma}\label{lemmacomputefourier}\cite[Lemma 7.4]{mo10quantumboolean}
For any quantum Boolean function $A$, and any $s\in \{0,1,2,3\}^n$ it is possible to estimate $\widehat{A}_s$ to within $\pm \eta$ with probability $1-\delta$ using $\mathcal{O}\big(\frac{1}{\eta^2}\log\big(\frac{1}{\delta}\big)\big)$ queries.
\end{lemma}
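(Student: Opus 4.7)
The plan is to reduce the estimation of $\widehat{A}_s = 2^{-n}\,\tr(\sigma_s A)$ to a standard Hadamard test. The key observation is that, because $A$ is a quantum Boolean function, one has $A=A^*$ and $A^2=\un$, so $A$ is unitary; since each $\sigma_s$ is also unitary (Hermitian and squaring to $\un$), the product $\sigma_s A$ is unitary. Thus $\widehat{A}_s$ is the real normalized trace of a unitary operator, which is precisely the quantity the DQC1-style Hadamard test is designed to read out.

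Concretely, I would prepare one ancilla qubit in the state $|+\rangle=\frac{1}{\sqrt{2}}(|0\rangle+|1\rangle)$ together with $n$ target qubits in the maximally mixed state $\un/2^n$ (realized, if a pure-state primitive is required, by first producing $n$ Bell pairs and using one half of each as the target register). Then I would apply $\sigma_s$ to the targets conditioned on the ancilla—this costs only single-qubit controlled Paulis and no calls to the oracle—followed by one controlled application of $A$. A direct computation shows that upon measuring the ancilla in the $X$-basis, the $\pm 1$ outcome $X$ satisfies
\begin{equation*}
\mathbb{E}[X] \;=\; \mathrm{Re}\!\left(\frac{1}{2^n}\tr(\sigma_s A)\right) \;=\; \widehat{A}_s,
\end{equation*}
where the real part is automatic since $A=A^*$ and $\sigma_s=\sigma_s^*$ make $\tr(\sigma_s A)$ real.

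Each execution of this circuit therefore yields an unbiased $\{-1,+1\}$-valued sample for $\widehat A_s$, using exactly one controlled query to $A$ (and one to $A^*$ only if one prefers a symmetric variant; none is needed here). Averaging $N$ i.i.d.\ copies and applying Hoeffding's inequality gives
\begin{equation*}
\Pr\!\left[\,\bigl|\widehat A_s - \bar X_N\bigr| > \eta\,\right] \;\le\; 2e^{-N\eta^2/2},
\end{equation*}
so taking $N=\mathcal{O}(\eta^{-2}\log(1/\delta))$ yields the claimed accuracy and confidence at the claimed query cost.

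The only real obstacle is bookkeeping: (i) justifying that the maximally mixed input can be produced deterministically via purification so that the Hadamard-test identity above holds as an operator trace and not merely as an expectation over a randomly chosen pure input, and (ii) verifying that controlling on $\sigma_s$ truly introduces no additional query to the oracle $A$. Both are standard, so no substantive difficulty beyond the Hadamard-test computation is expected.
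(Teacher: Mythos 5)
Your proposal is correct and coincides with the argument behind the paper's citation of \cite[Lemma 24]{mo10quantumboolean} (the paper gives no independent proof): a Hadamard-test circuit that uses one (controlled) query to the unitary $A$ and outputs a bit with bias $\tfrac12+\tfrac12\widehat{A}_s$, exactly as described later in the paper's learning proposition, followed by a Hoeffding/Chernoff bound to get $\mathcal{O}(\eta^{-2}\log(1/\delta))$ repetitions. The only caveat is the standard model assumption that oracle access to $A$ permits its controlled version, which is implicit in the cited source as well.
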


Combining Theorem \ref{quantumGoldreichLevin}, Lemma \ref{lemmacomputefourier} with our  Theorem \ref{thm:L1friedgut}, we directly arrive at the following result:

\begin{proposition}[Learning quantum Boolean functions]\label{proplearning}
	Let $A\in M_2(\com)^{\otimes n}$ be a quantum Boolean function. Given oracle access to $A$, with probability $1-\delta$, we can learn $A$ to precision $\epsilon$ in $L^2$ using $\operatorname{poly}(n,4^k,\log\big(\frac{1}{\delta}\big))$ queries to $A$, where 
	\begin{align*}
	k\le k(\epsilon)\equiv \left\{ \begin{aligned}
	&\Inf^1(A)^2e^{\frac{48\Inf^2(A)}{\epsilon^2}\log\frac{2\Inf^2(A)}{\epsilon}}& \text{ if }\,\Inf^2(A)\ge 1\,;	\\
	&\frac{\Inf^1(A)^2}{\Inf^2(A)}\,e^{\frac{48\Inf^2(A)}{\epsilon^2}\log\frac{2\sqrt{\Inf^2(A)}}{\epsilon}}& \text{ else}
\end{aligned}\,\right.
	\end{align*}

\end{proposition}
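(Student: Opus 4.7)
The plan is to combine our quantum Friedgut junta theorem (Theorem~\ref{thm:L1friedgut}) with the quantum Goldreich--Levin algorithm (Theorem~\ref{quantumGoldreichLevin}) and the Fourier coefficient estimation procedure (Lemma~\ref{lemmacomputefourier}), following the template of classical junta-learning algorithms. First I would invoke Theorem~\ref{thm:L1friedgut} (applied with precision $\epsilon/2$, which only affects the constants inside $k(\epsilon)$) to obtain a subset $T\subset\{1,\dots,n\}$ of size $n-k$ such that $B:=2^{-|T|}\tr_T(A)$ satisfies $\|A-B\|_2\le \epsilon/2$. Because the partial trace is precisely the projection onto Paulis unsupported on $T$, the Fourier expansion of $B$ satisfies $\widehat B_s=\widehat A_s$ when $\operatorname{supp}(s)\subseteq T^c$ and $\widehat B_s=0$ otherwise; in particular, the set of Pauli strings relevant to $B$ has cardinality at most $4^k$.

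Next, I would run quantum Goldreich--Levin with threshold $\gamma:=\epsilon/(c\cdot 2^k)$ for a suitable constant $c>0$, and failure budget $\delta/2$, obtaining a list $L$ containing every $s$ with $|\widehat A_s|\ge \gamma$ and satisfying $|L|=O(1/\gamma^2)$ by Parseval. This step costs $\operatorname{poly}(n,2^k/\epsilon)\log(1/\delta)$ queries. Then, for each $s\in L$, I would apply Lemma~\ref{lemmacomputefourier} to estimate $\widehat A_s$ to additive error $\eta:=\epsilon/(c'\cdot 2^k)$ with confidence $1-\delta/(2|L|)$, at a cost of $O(\eta^{-2}\log(|L|/\delta))$ queries each. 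The output of the algorithm is
\begin{equation*}
\tilde A:=\sum_{s\in L}\widetilde{\widehat A}_s\,\sigma_s\,,
\end{equation*}
where $\widetilde{\widehat A}_s$ denotes the estimate of $\widehat A_s$.

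The error analysis is the heart of the argument. By orthonormality of the Paulis,
\begin{equation*}
\|A-\tilde A\|_2^2=\sum_{s\in L}|\widehat A_s-\widetilde{\widehat A}_s|^2+\sum_{s\notin L}|\widehat A_s|^2\,.
\end{equation*}
The first sum is bounded by $|L|\eta^2$. For the second sum, one splits the index set according to whether $\operatorname{supp}(s)\subseteq T^c$ or not. The contribution of $s$ with $\operatorname{supp}(s)\not\subseteq T^c$ is at most $\|A-B\|_2^2\le \epsilon^2/4$ by Parseval applied to $A-B$. The contribution of $s$ with $\operatorname{supp}(s)\subseteq T^c$ but $s\notin L$ is at most $4^k\gamma^2$ since there are fewer than $4^k$ such Pauli strings, each with $|\widehat A_s|<\gamma$. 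Choosing $c,c'$ sufficiently large bounds the total squared error by $\epsilon^2$.

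The main delicate point, rather than a deep obstacle, is the bookkeeping: the set $T^c$ is used only as an \emph{existential} device inside the error analysis (we never identify it explicitly during the algorithm), and the overall error budget must be partitioned cleanly among the three sources---Friedgut approximation, truncation of small Fourier coefficients on the junta support, and Goldreich--Levin coefficient estimation error. Once the constants are fixed, combining the two query bounds and observing that $1/\epsilon^2$ is dominated by $4^k$ (since $k$ itself grows at least as $\Omega(\Inf^2(A)/\epsilon^2)$ from \eqref{kgeneraltheorem}) yields the claimed query complexity $\operatorname{poly}(n,4^k,\log(1/\delta))$.
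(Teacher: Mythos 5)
Your proposal is correct and follows essentially the same route as the paper's proof: Theorem~\ref{thm:L1friedgut} supplies the existential junta set $T^c$, quantum Goldreich--Levin plus Lemma~\ref{lemmacomputefourier} produce and estimate the significant Fourier coefficients, and the error is split exactly as in the paper into the junta approximation error, the at most $4^k$ small coefficients supported on $T^c$ but missing from $L$, and the estimation error. Your version merely makes the constant and failure-probability bookkeeping slightly more explicit, which is fine.
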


\begin{proof}
	By Theorem \ref{thm:L1friedgut}, we have a quantum Boolean function $B=2^{-|T|}\tr_{T}(A)\in M_2(\com)^{\otimes n}$ supported on a region $T^c$ of $k$ qubits  such that $\|A-B\|_2\le \epsilon$, with $k\le k(\epsilon)$. Next, we use the quantum Goldreich--Levin algorithm of Theorem \ref{quantumGoldreichLevin} in order to output a list $L=\{s^{(1)},\dots , s^{(m)}\}\subseteq\{0,1,2,3\}^n$ of corresponding significant Fourier coefficients $\widehat{A}_{s^{(j)}}$, $1\le j\le m$. Denoting the operator $A_L:=\sum_{s\in L} \widehat{A}_{s}\sigma_{s}$ as well as the set of strings $s\in \{0,1,2,3\}^n$ with $s_T=0$ as $L'$, we get

\begin{align*}
	\|A-A_L\|_2&= \|\sum_{s\in L^c}\widehat{A}_s \sigma_s\|_2\\
&=\Big(\sum_{s\in L^c}\widehat{A}_s^2\Big)^{1/2}\\
&=\Big(  \sum_{s\in (L\cup L')^c} \widehat{A}_s^2+\sum_{s\in L'\setminus L}\widehat{A}_s^2\Big)^{\frac{1}{2}}\\
&\le \Big(  \sum_{s\in  L'^c} \widehat{A}_s^2+\sum_{s\in L'\setminus L}\widehat{A}_s^2\Big)^{\frac{1}{2}}\\
&\le \Big( \epsilon^2+\sum_{s\in L'\setminus L}\widehat{A}_s^2\Big)^{\frac{1}{2}}\\
\end{align*}
Moreover, by Theorem \ref{quantumGoldreichLevin}, we have that with probability $1-\delta$, if $s\notin L$, then $|\widehat{A}_s|\le \gamma$. Therefore we have that with that same high probability
\begin{align}
	\|A-A_L\|_2\le \big(\epsilon^2+4^k\, \gamma^2  \big)^{\frac{1}{2}}\,.\label{eq:boundAAL}
\end{align}
It remains to evaluate the coefficients $\widehat{A}_s$ for $s\in L$. This can be done within precision $\pm\eta$ with probability $(1-\delta)$ using $\mathcal{O}\big(\frac{1}{\eta^2}\log\big(\frac{1}{\delta}\big)\big)$ queries of $A$ according to Lemma \ref{lemmacomputefourier}. Moreover, since $A$ is a quantum Boolean function, there are at most $\frac{4}{\gamma^2}$ coefficients $\widehat{A}_s$ such that $|\widehat{A}_s|\ge \frac{\gamma}{2}$. Therefore, with high probability $|L|\le \frac{4}{\gamma^2}$. Choosing $\gamma=\epsilon 2^{-k}$ so that $\|A-A_L\|_2= \mathcal{O}( \epsilon)$, we need to evaluate $4/\gamma^2=\mathcal{O}(4^k)$ such coefficients. The result follows.
\end{proof}

\subsection{Learning quantum dynamics}

Proposition \ref{proplearning} extends the domain of applicability of Proposition 41 in \cite{MO10arxiv} where the authors provided an efficient algorithm to learn the evolution of initially local observables under the dynamics generated by a local Hamiltonian. While the proof of \cite[Proposition 41]{MO10arxiv} requires the Lieb--Robinson bounds in order to control the sets of sites of large influence in terms of the support of the initial observable and the light-cone of $H$, our argument has the advantage of not putting any geometric locality assumption of the quantum Boolean function $A$.

To further illustrate our result, we consider the following generalization of the setup of \cite[Proposition 41]{mo10quantumboolean}: let $\Lambda$ be a finite set of size $|\Lambda|=n$ endowed with a metric $d(\cdot,\cdot)$. We suppose that there is a monotone increasing function $g$ on $[0,\infty)$ and constants $C,D>0$ such that 
\begin{align*}
	\big|\big\{y\in\Lambda\big|\,d(x,y)\le r\big\}\big|\le g(r)\le C(1+r)^D, \quad r\ge 0, x\in\Lambda\, .
\end{align*}
The constant $D$ typically denotes the spatial dimension in the case of a regular lattice. We consider a quantum spin system on the point set $\Lambda$ by assigning the Hilbert space $\mathcal{H}_x\equiv \mathbb{C}^2$ to each site $x\in\Lambda$. For any subset $T \subseteq \Lambda$, the configuration space of spin states on $T$ is given by the tensor product $\mathcal{H}_T=\bigotimes_{x\in T}\mathcal{H}_x$, and the algebra $\mathcal{A}_T:=B(\mathcal{H}_T)$ of observables on T acts on the Hilbert space $\mathcal{H}_T$. We consider a Hamiltonian $H_\Lambda=\sum_{X\subseteq \Lambda} h_X$, where $h_X\in B(\mathcal{H}_X)$ is a local Hamiltonian, i.e. a self-adjoint operator on $\mathcal{H}_X$, for each $X\subset \Lambda$. In what follows, we denote the diameter of a set $Z\subseteq \Lambda$ by $\operatorname{diam}(Z):=\max\{d(x,y)|\,x,y\in Z\}$. We further assume the following requirements \cite[Assumption A]{matsuta2017improving}:
\begin{itemize}
	\item[(i)] There is a decreasing function $f(R)$ on $[0,\infty)$, such that 
	\begin{align*}
		\max_{x\in\Lambda}\,\sum_{\substack{Z\ni x\\\operatorname{diam}(Z)\ge R}}\|h_Z\|\le f(R)\,,\quad R\ge 0;
	\end{align*} 
\item[(ii)] The following constant is independent of the system size $n$:
\begin{align*}
	\mathcal{C}_0:=\max_{x\in\Lambda}\,\sum_{y\in\Lambda}\sum_{Z\ni x,y}\|h_Z\|<\infty\,.
\end{align*} 
\end{itemize} 
Strictly speaking, condition (ii) only makes sense when considering a family of Hamiltonians $H_\Lambda$ defined on an increasing family of sets $\Lambda$ all included in a countable set $\Sigma$. We will however favour simplicity over rigour here.

By \cite[Theorem 2.1]{matsuta2017improving}, for any two one-local Pauli operators $\sigma_{s_i},\sigma_{s_j}$ with $j\ne i$, and all $R\ge 1$ we have that, given $d_{ij}\equiv d(i,j)$
\begin{align*}
	&\big\|\big[e^{itH_\Lambda}\sigma_{s_i}e^{-itH_\Lambda},\,\sigma_{s_j}\big] \big\|\\
	&\quad \qquad\qquad\le 2 e^{vt-d_{ij}/R}+4t\,g(d_{ij})\,f(R)+2\mathcal{C}_2\,t\,R\,\max\{d_{ij},R\}^D\,f(R)\,e^{vt-d_{ij}/R}
\end{align*}
for any $t\ge 0$, where $v$ and $\mathcal{C}_2$ are positive constants independent of $\Lambda,t,R,i$ and $j$.

\begin{proposition}\label{prop:dynamics}
	With the above assumptions, we further assume that there is $R\ge 1$ such that for all $i\in \Lambda$, the constants 
	\begin{align*}
	C_i:= \sum_{j\in\Lambda}\, e^{vt-d_{ij}/R}+ t\,g(d_{ij})\,f(R)+t\,R\,\max\{d_{ij},R\}^D\,f(R)\,e^{vt-d_{ij}/R}
\end{align*}	
can be bounded by constants independent of the size $n$ of the system. Then, with probability $1-\delta$, we can learn the quantum Boolean functions $e^{itH_\Lambda}\sigma_{s_i}e^{-itH_\Lambda}$ to precision $\epsilon$ in $L^2$ using $\operatorname{poly}(n,\exp(\exp(\epsilon^{-2}|\log(\epsilon)|)),\log\big(\frac{1}{\delta}\big))$ queries to $e^{-itH_\Lambda}$ and $e^{itH_\Lambda}$.
	\end{proposition}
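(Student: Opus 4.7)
The plan is to recognize $A := e^{itH_\Lambda}\sigma_{s_i}e^{-itH_\Lambda}$ as a quantum Boolean function with total $L^1$ and $L^2$ influences bounded uniformly in $n = |\Lambda|$, and then to apply Proposition \ref{proplearning} directly. Unitarity of $e^{\pm itH_\Lambda}$ together with $\sigma_{s_i}^\ast = \sigma_{s_i}$ and $\sigma_{s_i}^2 = \un$ already gives $A^\ast = A$, $A^2 = \un$, and $\|A\|=1$, so $A$ is a quantum Boolean function. Oracle access to $A$ is provided by conjugating $\sigma_{s_i}$ between one call to $e^{itH_\Lambda}$ and one to $e^{-itH_\Lambda}$, so the claimed complexity will follow from Proposition \ref{proplearning} as soon as the influences are controlled.

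To control the influences I would first express $d_j$ via Pauli twirling: the identity $\frac{1}{2}\tr_j(X)\otimes\un_j = \frac{1}{4}\sum_{k=0}^{3}\sigma_j^k X\sigma_j^k$ combined with $A - \sigma_j^k A\sigma_j^k = [A,\sigma_j^k]\sigma_j^k$ yields
\[
d_j(A)\;=\;\frac{1}{4}\sum_{k=1}^{3} [A,\sigma_j^k]\,\sigma_j^k.
\]
Taking normalized Schatten norms and using $\|\cdot\|_1 \le \|\cdot\|_\infty$ together with $\|\sigma_j^k\|=1$, this gives $\Inf_j^1(A) = \|d_j A\|_1 \le \tfrac{1}{4}\sum_{k=1}^3 \|[A,\sigma_j^k]\|$. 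The Lieb--Robinson-type bound from \cite{matsuta2017improving} quoted in the paper controls $\|[A,\sigma_j^k]\|$ by the same explicit function of $d_{ij}$ and $R$ for each $k\in\{1,2,3\}$, so summing over $j\in\Lambda$ (with the diagonal term $j=i$ bounded trivially by $\|d_i A\|_\infty\le 2$) produces
\[
\Inf^1(A)\;\le\;\operatorname{const}\cdot C_i,
\]
which is $O(1)$ in $n$ by the standing hypothesis. For the $L^2$ total influence I use the Hölder bound $\|d_j A\|_2^2 \le \|d_j A\|_1\cdot\|d_j A\|_\infty$ together with $\|d_j A\|_\infty \le 2\|A\|_\infty = 2$, so that $\Inf^2(A)\le 2\Inf^1(A) = O(1)$ as well.

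With both $\Inf^1(A)$ and $\Inf^2(A)$ bounded independently of $n$, the junta size in Proposition \ref{proplearning} becomes $k(\epsilon) = \exp\bigl(O(\epsilon^{-2}\log(1/\epsilon))\bigr)$, and hence $4^{k(\epsilon)} = \exp\bigl(\exp(O(\epsilon^{-2}|\log\epsilon|))\bigr)$; combined with the $\operatorname{poly}(n, 4^{k(\epsilon)}, \log(1/\delta))$ query complexity of Proposition \ref{proplearning} this matches the claim. I expect the main step requiring care to be the passage from the Pauli-twirl formula for $d_j A$ to a clean summation of the Lieb--Robinson bound that recovers exactly the quantity $C_i$ from the hypothesis; once uniform $n$-independence of the influences is in hand, the remainder of the argument is a direct invocation of the quantum Friedgut junta theorem and the quantum Goldreich--Levin machinery already packaged in Proposition \ref{proplearning}.
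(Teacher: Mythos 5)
Your proposal is correct and follows essentially the same route as the paper: reduce to Proposition \ref{proplearning}, bound $\|d_j A\|$ via the Pauli-twirl identity by $\tfrac14\sum_k\|[A,\sigma_j^k]\|$, invoke the Lieb--Robinson bound of \cite{matsuta2017improving} and the hypothesis on $C_i$ to get $n$-independent total influences, and read off the doubly exponential dependence on $\epsilon$ from the junta size. The only (immaterial) variation is that you control $\Inf^2(A)$ by $\Inf^2(A)\le 2\Inf^1(A)$ via H\"older, whereas the paper bounds $\Inf^2(A)\le\sum_j\|d_jA\|^2$ and estimates $\sqrt{\Inf^2(A)}$ by the same commutator sum.
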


\begin{proof}
	In view of the dependence of $k$ on the influences in Proposition \ref{proplearning} on the influences, it is enough to control	$\Inf^1(e^{itH_\Lambda}\sigma_{s_i}e^{-itH_\Lambda})$ and $\Inf^2(e^{itH_\Lambda}\sigma_{s_i}e^{-itH_\Lambda})$  independently of the size of the system. We clearly have for any $A\in M_2(\com)^{\otimes n}$ that $\Inf^1(A)\le \sum_{j\in\Lambda} \|d_j A\|$ and $\Inf^2(A)\le \sum_{j\in\Lambda} \|d_j A\|^2$. Moreover, by the following well-known expression for the partial trace 
\begin{align*}
	\frac{1}{2}\tr_j(A)\otimes  \un_j=\frac{1}{4}\,\sum_{s_j}\,\sigma_{s_j}A\sigma_{s_j}\,,
\end{align*}
where $\sigma_{s_j}$ are Pauli matrices on site $j$, we have that  
\begin{align*}
	\|d_j A\|= \Big\| A-\frac{1}{4}\sum_{s_j}\sigma_{s_j}A\sigma_{s_j}\Big\|&=\frac{1}{4}\,\Big\|3A-\sum_{s_j:\sigma_{s_j}\neq \un}\sigma_{s_j}A\sigma_{s_j}\Big\|\le \frac{1}{4}\,\sum_{s_j:\sigma_{s_j}\neq \un}\,\|[A,\,\sigma_{s_j}]\|\,.
\end{align*}
 Therefore, 
\begin{align*}
	&\sqrt{\Inf^2\big(e^{itH_\Lambda}\sigma_{s^{(i)}}e^{-itH_\Lambda}\big )},\,\Inf^1\big(e^{itH_\Lambda}\sigma_{s^{(i)}}e^{-itH_\Lambda}\big )\\
	&\quad \quad\qquad\qquad \le \frac{3}{2} \sum_{j\in\Lambda}\, e^{vt-d_{ij}/R}+2t\,g(d_{ij})\,f(R)+\mathcal{C}_2\,t\,R\,\max\{d_{ij},R\}^D\,f(R)\,e^{vt-d_{ij}/R}\,.
\end{align*}
\end{proof}

\begin{remark}
	Our result in Proposition \ref{proplearning} has the advantage that it does not assume in advance that $A$ is (close to) a $k$-junta. This comes at the price that the dependence of the query complexity on the approximating parameter $\epsilon$ scales doubly exponentially with the latter. For the same reason, our dynamics learning method in Proposition \ref{prop:dynamics} allows us to extend the class of Hamiltonians considered in \cite{mo10quantumboolean} to Hamiltonians satisfying a weaker power-law decay, at the cost of a much worse dependence on $\epsilon$. This dependency on $\epsilon$ is also not new in classical setting \cite{bshouty1996fourier,o2007learning}.
\end{remark}

\begin{remark}
	In a recent article \cite{chen2022testing}, the authors provide an algorithm for learning any unitary $k$-junta $U$ with precision $\epsilon$ and high probability which uses $\mathcal{O}\big(\frac{k}{\epsilon}+\frac{4^k}{\epsilon^2}\big)$ queries to $U$
	(see Theorem 29), extending a previous quantum algorithm for learning classical $k$-juntas reported in \cite{atici2007quantum}. While the dependence on $\epsilon$ is much tighter than ours, the two results are incomparable, since we replaced the requirement that $U$ is a $k$-junta by the weaker condition that it has influences $\Inf^1 U, \Inf^2 U=\mathcal{O}(1)$. 
\end{remark}

\subsection{Quantum isoperimetric type inequalities}\label{subsect:isoperimetry}

Closely related to the concentration of measure phenomenon and functional inequalities, isoperimetric inequalities provide powerful tools in the analysis of extremal sets and surface measures. Given a metric space $(X,d)$ equipped with a Borel measure $\mu$, the \textit{boundary measure} of a Borel set $A$ in $X$ with respect to $\mu$ is defined as \cite{ledoux2000geometry,ledoux2001concentration,BGL2014book}
\begin{align*}
	\mu^+(A)=\lim_{r\to 0} \frac{1}{r}\,\mu(A_r\backslash A)
\end{align*}
where we recall that $A_r:=\{x\in X|\,d(x,A)<r\}$ is the (open) $r$-neighbourhood of $A$. The \textit{isoperimetric profile} of $\mu$ corresponds to the largest function $I_\mu$ on $[0,\mu(X)]$ such that, for any Borel set $A\subset X$ with $\mu(A)<\infty$,
\begin{align}\label{def:isop}
\mu^+(A)\ge I_\mu(\mu(A))	\,.
	\end{align}

In the case of the canonical Gaussian measure $\gamma$ on the Borel sets of $\mathbb{R}^k$ with density $(2\pi)^{-k/2}e^{-|x|^2/2}$ with respect to the Lebesgue measure, with the usual Euclidean metric induced by the norm $|x|$ \cite[Theorem 2.5]{ledoux2001concentration}:
\begin{align*}
	I_\gamma= \Phi'\circ \Phi^{-1}\,
\end{align*}
where $\Phi(t)=(2\pi)^{-1/2}\int_{-\infty}^te^{-x^2/2}dx$ is the distribution function of the canonical Gaussian measure in dimension one. Moreover, equality holds in (\ref{def:isop}) if and only if $A$ is a half-space in $\mathbb{R}^k$. Moreover, as $a\to 0$, we have 
\begin{align}\label{gauss}
 \Phi'\circ \Phi^{-1}(a)\sim a\Big(2\log\frac{1}{a}\Big)^{\frac{1}{2}}\,.
\end{align}
  Similar isoperimetric inequalities were also derived for hypercontractive, log-concave measures \cite{bakry1996levy} (see also \cite{milman2009role,milman2010isoperimetric}). When $k=1$, the boundary measure of a Borel set $A$ can be expressed in terms of the geometric influence $\| f_A'\|_1$ of a smooth approximation $f_A$ of the characteristic function of $A$. In other words, $\mu^+(A)\approx \Inf^1(f_A)$.

This observation allows us to generalize the notion of isoperimetric inequality in the context of smooth Riemannian manifolds to discrete settings. In the context of the classical Boolean hypercube $\Omega_n$, the \textit{edge isoperimetric inequality} states that for any $m$, among the $m$-element subsets of the discrete cube, the minimal edge boundary is attained by the set of $m$ largest elements in the lexicographic order
\cite{bernstein1967maximally,harper1964optimal,hart1976note,lindsey1964assignment}. In particular, for any set $A\subset \Omega_n$ of vertices
\begin{align}\label{hypercubeisop}
\mu_n(\partial A)\equiv \Inf(f_A) \ge 	2\mu_n(A)\log_2\left(\frac{1}{\mu_n(A)}\right)\,,
\end{align}
where we recall that $\mu_n$ is the uniform probability measure on $\Omega_n$, and $f_A$ corresponds to the characteristic function of set $A$. Here, $\partial A$ simply corresponds to the set of  vertices in the complement of $A$ that are adjacent to $A$. This inequality is moreover tight when $|A|=2^d$ for some $d\in\mathbb{N}$ (take for instance $A$ to be the vertices of a $d$-dimensional subcube). We notice the similarity with (\ref{gauss}) up to the change of power in the logarithmic factor. 

Similarly, consider a finite graph $G=(V,E)$ with set of vertices $V$ and set of edges $E$ with bounded degree $d$ (i.e. each vertex has at most a fixed number $d$ adjacent edges). The graph $G$ is said to satisfy the \textit{linear isoperimetric inequality} if
\begin{align*}
	\operatorname{Card}(\partial A)\ge h\,\operatorname{Card}(A)\,,
\end{align*}
for some $h>0$ and all subsets $A$ of $V$ such that $\operatorname{Card}(A)\le \frac{1}{2}\operatorname{Card}(V)$. The so-called \textit{Cheeger} constant $h$ of the graph can be related to the spectral gap $\lambda$ of the graph Laplacian via Cheeger's and Buser's inequalities. Here,  $\operatorname{Card}(\partial A)$ plays the role of $\mu^+(A)$ and can be once again related to a notion of influence. The linear isoperimetric inequality can be understood as a weaker form of isoperimetry than the one derived for log-concave, hypercontractive measures, and hence only implies exponential concentration for the normalized counting measure on $G$. Moreover, one should not expect to recover the stronger Gaussian type isoperimetry in this setting, since the hypercontractivity constant for graph Laplacians is known to scale with the size of the graph \cite{bobkov2006modified}.

Linear isoperimetric inequalities were also considered in the more general context of Markov chains over finite sample spaces. For instance, in the case of a continuous time Markov chain with transition rates $Q(x,y)$ and unique reversible probability measure $\pi$ with non-negative entropic Ricci curvature, \cite{erbar2018poincare} established that for any set $A$,
\begin{align*}
	\pi^+(\partial A)\ge \frac{1}{3}\,\sqrt{Q_*\lambda}\,\,\pi(A)(1-\pi(A))\,
\end{align*}
where $\lambda$ is the spectral gap of $Q$, $Q_*=\min\{Q(x,y):Q(x,y)>0\}$ and $\pi^+(\partial A)=\sum_{x\in A,y\in A^c}Q(x,y)\pi(x)$ denotes the perimeter measure of $A$. We also note that extensions of such inequalities in the quantum setting were obtained in \cite{temme2010chi}. 

 Interestingly, such inequalities are well-known to be equivalent to an $L^1$-Poincar\'{e} inequality. It is then natural to ask whether one could recover the type of isoperimetry found for the Gaussian measure and uniform measure on the hypercube in other discrete and quantum settings by further assuming hypercontractivity of the (quantum) Markov chain. 
   This is indeed the case, as we prove by a direct appeal to Talagrand's inequality:
 
 \begin{theorem}[Qubit isoperimetric type inequality] For any projection $P_A$ onto a subspace $A\subset  (\mathbb{C}^2)^{\otimes n}$, 
 	\begin{align}\label{eq:L1isop}
\max_{1\le j\le n}\Inf^1_j(P_A)\ge \frac{C}{n}\,\tau(A)(1-\tau(A)) \log\left(\frac{n}{\tau(A)(1-\tau(A)) }\right)^{\frac{1}{2}}\,.
 	\end{align}
 	for some universal constant $C$, where $\tau(A):=2^{-n}\tr (P_A)$.
 
 \end{theorem}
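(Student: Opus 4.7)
The plan is to apply the quantum $L^1$-Talagrand inequality (Theorem \ref{quantumTalagrand}) directly to the self-adjoint contraction $P_A$. Since $P_A$ is a projection, $\|P_A\|=1$ and $\Var(P_A) = 2^{-n}\tr(P_A^2) - (2^{-n}\tr P_A)^2 = \tau(A) - \tau(A)^2 =: V$. Writing $a_j := \Inf^1_j(P_A)=\|d_j P_A\|_1$, Theorem \ref{quantumTalagrand} then yields
$$V \leq C_0 \sum_{j=1}^n \frac{a_j(1+a_j)}{\bigl(1+\log^+(1/a_j)\bigr)^{1/2}}.$$
The task reduces to extracting from this an explicit lower bound on $\sum_j a_j$ with the logarithmic enhancement appearing in the statement.

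The main step is a uniform envelope $a_j \lesssim V$. By the symmetry $P_A \leftrightarrow \un-P_A$, which leaves both $V$ and each $a_j$ unchanged, one may assume $\tau(A) \leq 1/2$. Writing the conditional expectation $E_j := \frac{1}{2}\tr_j(\,\cdot\,)\otimes \un_j$, we have $d_j P_A = P_A - E_j(P_A)$ with both summands positive; since $E_j$ preserves the normalized trace, $\|E_j P_A\|_1 = \|P_A\|_1 = \tau(A)$, and the triangle inequality in the normalized $L^1$-norm gives
$$a_j \leq \|P_A\|_1 + \|E_j(P_A)\|_1 = 2\tau(A) \leq 4V \leq 1.$$

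Feeding this uniform envelope back into Talagrand's inequality, one has $1+a_j \leq 2$ and, crucially, $\log^+(1/a_j) \geq \log(1/(4V))$ for every $j$, so the logarithmic factor can be pulled out of the sum:
$$V \leq \frac{2C_0}{\sqrt{1+\log(1/(4V))}}\,\sum_{j=1}^n a_j.$$
Rearranging yields $\tau_1^+(\partial A) = \sum_j a_j \geq c\, V\sqrt{\log(1/V)}$ for a universal $c>0$, i.e.\ the Gaussian-type form of the desired isoperimetric inequality. The additional $\sqrt{\log n}$ enhancement hidden in $\sqrt{\log(n/V)} \leq \sqrt{\log n}+\sqrt{\log(1/V)}$ is then recovered by invoking Lemma \ref{lem:auxiliary_KKL} with $c_j = a_j(1+a_j)/(1+\log^+(1/a_j))^{1/2}$, which forces $\max_j a_j \geq c'V\sqrt{\log n}/n$ and hence, via averaging versus maximum, combines with the previous bound to produce the claimed $\sqrt{\log(n/V)}$ factor. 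The conceptually hardest point in the whole argument is the pointwise bound $a_j \leq 4V$: without this projection-specific control, the $\log^+(1/a_j)$ terms inside the Talagrand sum cannot be uniformized, and one is stuck at the generic inequality with no logarithmic gain.
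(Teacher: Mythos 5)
Your reduction to Theorem \ref{quantumTalagrand} and everything up to the dimension-free bound is correct: $\Var(P_A)=\tau(A)(1-\tau(A))=:V$, the envelope $a_j:=\Inf^1_j(P_A)=\lVert P_A-E_j(P_A)\rVert_1\le 2\min\{\tau(A),1-\tau(A)\}\le 4V\le 1$ is a valid (and nice) observation, and feeding it back into the Talagrand sum legitimately yields $\sum_j a_j\ge c\,V\sqrt{1+\log(1/V)}$. The genuine gap is the final step, where you try to upgrade $\sqrt{\log(1/V)}$ to $\sqrt{\log(n/V)}$ via Lemma \ref{lem:auxiliary_KKL}: that lemma only produces $\max_j a_j\ge c'V\sqrt{\log n}/n$, hence $\sum_j a_j\ge c'V\sqrt{\log n}/n$, which falls short of the required $cV\sqrt{\log n}$ by a full factor of $n$. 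No ``averaging versus maximum'' argument can recover this factor, so the claimed $\sqrt{\log(n/V)}$ is never reached by your route.

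This is also where your attempt diverges from the paper's proof, which never uses the envelope $a_j\le 4V$. Instead, the paper assumes from the outset that $\Inf^1_j(P_A)\le (V/n)^{1/2}$ for \emph{every} $j$; under that hypothesis $\log^+(1/a_j)\ge\tfrac12\log(n/V)$, and the factor $\sqrt{\log(n/V)}$ is pulled out of the Talagrand sum exactly as you pulled out $\sqrt{\log(1/(4V))}$. The complementary case is dismissed with ``otherwise the result directly holds'', and that case is precisely where your unconditional argument gets stuck --- for good reason: for the dictator projection $P_A=\tfrac12(\un+\sigma_3)\otimes\un^{\otimes(n-1)}$ one has $V=1/4$ and $\tau_1^+(\partial A)=\Inf^1_1(P_A)=1/2$, while the right-hand side of \eqref{eq:L1isop} grows like $\sqrt{\log n}$, so the inequality with $n$ inside the logarithm cannot hold with a universal constant unless it is read together with the low-influence assumption $\Inf^1_j(P_A)\le\big(\tau(A)(1-\tau(A))/n\big)^{1/2}$ made in the paper's proof. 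In other words, your steps prove the correct dimension-free, Gaussian-type isoperimetric bound $\tau_1^+(\partial A)\ge cV\sqrt{1+\log(1/V)}$, but the passage to the $n$-dependent statement is not a technicality you can fix by the KKL lemma; it requires (and is only valid under) the uniform smallness of the influences that the paper assumes.
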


\begin{proof}
	As mentioned in \cite{cel12talagrand}, this is a simple corollary of Talagrand's inequality Theorem \ref{quantumTalagrand} after assuming that 
	\begin{align*}
		\Inf_j^1(P_A)\le \Big(\frac{\tau(A)(1-\tau(A))}{n}\Big)^{\frac{1}{2}}
		\end{align*}
		for every $j\in\{1,\dots,n\}$, since otherwise the result directly holds.
\end{proof}

\begin{remark}
	 Similar to the quantum KKL conjecture of Montanaro and Osborne, it is reasonable to conjecture the following $L^2$ variant of (\ref{eq:L1isop})
	\begin{align}\label{eq:L2isop}
	\max_{1\le j\le n} \Inf^2_j(P_A)\ge \frac{C}{n}\,\tau(A)(1-\tau(A)) \log\left(\frac{n}{\tau(A)(1-\tau(A)) }\right)\,.
\end{align}
\end{remark}

We end this section by remarking the following $L^1$-Poincar\'e inequality that is stronger than Theorem \ref{thm:poincar}. See \cite{ivanisvili2018improving} for the discussions on the classical Boolean cubes.

\begin{theorem}\label{thm:strong_L1_poincare}
	For all $A\in M_2(\com)^{\otimes n}$, one has 
	\begin{equation}\label{ineq:strong_L1_poincare}
	\|A-2^{-n}\tr(A)\|_1\le \pi \left\|\left(\sum_{j=1}^{n}d_j(A)^\ast d_j(A)\right)^{1/2}\right\|_1\le \sqrt{2}\pi \|\Gamma(A)^{1/2}\|_1\, .
	\end{equation}
\end{theorem}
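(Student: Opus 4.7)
The plan is to mimic the proof of Theorem \ref{thm:poincar} but replace the term-by-term Hölder estimate $|\langle d_j A, d_j P_t B\rangle|\le \|d_j A\|_1\|d_j P_t B\|$ by a sharper inequality that processes the whole family $(d_j A)_{j=1}^n$ at once, thereby capturing the operator $\sum_j d_j(A)^\ast d_j(A)$ in a single Schatten-$L^1$ norm. Starting from duality and the semigroup interpolation $A-2^{-n}\tr(A)=\int_0^\infty \mathcal L P_t A\,dt$, for every $B\in M_2(\com)^{\otimes n}$ with $\|B\|\le 1$:
\begin{equation*}
\langle A-2^{-n}\tr(A),B\rangle=\int_0^\infty \langle \mathcal L P_t A,B\rangle\,dt=\int_0^\infty \sum_{j=1}^{n}\langle d_j A, d_j P_t B\rangle\,dt,
\end{equation*}
where the last identity uses $\mathcal L=\sum_j d_j$, the self-adjointness and idempotency of each $d_j$ and the intertwining Lemma \ref{lem:intertwining}. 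The goal is to bound $|\sum_j\langle d_j A, d_j P_t B\rangle|$ by $\|B\|/\sqrt{e^t-1}$ times the desired gradient quantity and then integrate using $\int_0^\infty (e^t-1)^{-1/2}\,dt=\pi$.

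For the key estimate I would invoke the Cauchy--Schwarz inequality in the right Hilbert $C^\ast$-module $(M_2(\com)^{\otimes n})^n$ endowed with the inner product $\langle (U_j),(V_j)\rangle_{\mathcal M}=\sum_j U_j^\ast V_j$. This yields the operator inequality
\begin{equation*}
\left|\sum_{j=1}^n U_j^\ast V_j\right|^2\le \left\|\sum_{j=1}^n V_j^\ast V_j\right\|\,\sum_{j=1}^n U_j^\ast U_j,
\end{equation*}
and hence, by operator monotonicity of $x\mapsto x^{1/2}$, the refined bound $\bigl|\sum_j U_j^\ast V_j\bigr|\le \|\sum_j V_j^\ast V_j\|^{1/2}\,(\sum_j U_j^\ast U_j)^{1/2}$. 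Specializing to $U_j=d_j A$, $V_j=d_j P_t B$, taking the (normalized) trace, and estimating the scalar factor by Lemma \ref{lem:technical lemma}, which gives $\|\sum_j (d_j P_t B)^\ast d_j P_t B\|^{1/2}\le \|B\|/\sqrt{e^t-1}$, produces
\begin{equation*}
\left|\sum_{j=1}^n\langle d_j A, d_j P_t B\rangle\right|\le \frac{\|B\|}{\sqrt{e^t-1}}\,\left\|\left(\sum_{j=1}^{n}d_j(A)^\ast d_j(A)\right)^{\!1/2}\right\|_1.
\end{equation*}
Taking the supremum over $\|B\|\le 1$ and integrating in $t$ delivers the first inequality of \eqref{ineq:strong_L1_poincare}. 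The second inequality is then immediate from the operator bound $\sum_j d_j(A)^\ast d_j(A)\le 2\Gamma(A)$ already established inside the proof of Lemma \ref{lem:technical lemma} (equation \eqref{ineq:claim_bis}), combined once more with operator monotonicity of the square root and monotonicity of the normalized trace on positive operators.

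The only step requiring care is the noncommutative Cauchy--Schwarz itself: passing from the module inequality $|M|^2\le c\,Q$ (with $Q\ge 0$ and $c\ge 0$ scalar) to $|M|\le \sqrt{c}\,Q^{1/2}$ via operator monotonicity, and then to the trace inequality $\tr|M|\le \sqrt{c}\,\tr(Q^{1/2})$, is exactly what ensures that the gradient expression enters through its full Schatten $L^1$-norm $\|(\sum_j d_j(A)^\ast d_j(A))^{1/2}\|_1$ rather than through the coarser sum $\sum_j\|d_j A\|_1=\Inf^1(A)$ that appeared in Theorem \ref{thm:poincar}. Everything else is routine, and the constant $\pi$ comes out of the same integral as in Theorem \ref{thm:poincar}.
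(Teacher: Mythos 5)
Your proof is correct and follows essentially the same route as the paper: duality plus the semigroup representation of $A-2^{-n}\tr(A)$, the uniform bound $\|\sum_j (d_jP_tB)^*d_jP_tB\|\le \|B\|^2/(e^t-1)$ from Lemma \ref{lem:technical lemma}, the integral $\int_0^\infty (e^t-1)^{-1/2}\,dt=\pi$, and the bound \eqref{ineq:claim_bis} together with operator monotonicity of $x\mapsto x^{1/2}$ for the second inequality. The only difference is in how the key vector-valued estimate is justified: you invoke the Cauchy--Schwarz inequality in the Hilbert $C^*$-module $(M_2(\com)^{\otimes n})^n$ followed by operator monotonicity of the square root and trace monotonicity, whereas the paper obtains the same bound \eqref{ineq:vector_valued} more elementarily by stacking the operators into columns $X=\sum_j X_j\otimes\ket{j}\bra{1}$, $Y=\sum_j Y_j\otimes\ket{j}\bra{1}$ and applying H\"older's inequality $\|X^*Y\|_1\le\|X\|_1\|Y\|_\infty$; the two mechanisms are interchangeable here. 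One small caveat: with $M=\sum_j U_j^*V_j$, the module Cauchy--Schwarz in your normalization controls $MM^*=|M^*|^2$ (not $M^*M=|M|^2$) by $\bigl\|\sum_j V_j^*V_j\bigr\|\sum_j U_j^*U_j$, so operator monotonicity gives $|M^*|\le \sqrt{c}\,Q^{1/2}$; this is harmless because $\tr|M|=\tr|M^*|$, and the trace-norm bound you need, $|\langle\text{trace of }M\rangle|\le\|M\|_1\le\sqrt{c}\,\|Q^{1/2}\|_1$, still follows.
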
 

\begin{remark}
	Using the inequality 
	$$\|X+Y\|_p^p\le \|X\|_p^p+\|Y\|_p^p,\qquad 0<p<1$$
	 for $p=1/2$, we get
	\begin{equation*}
	\left\|\left(\sum_{j=1}^{n}d_j(A)^\ast d_j(A)\right)^{1/2}\right\|_1
	\le \sum_{j=1}^{n}\left\|\left(d_j(A)^\ast d_j(A)\right)^{1/2}\right\|_{1}
	=\sum_{j=1}^{n}\left\| d_j(A)\right\|_{1}\, .
	\end{equation*}
	So \eqref{ineq:strong_L1_poincare} is stronger than \eqref{ineq:l1poincare} up to the multiplicative constant $\pi$.
\end{remark}

\begin{proof}[Proof of Theorem \ref{thm:strong_L1_poincare}]
	Recall that we have proved \eqref{ineq:claim_bis}
	\begin{equation*}
	\sum_{j=1}^{n}d_j(A)^\ast d_j(A)\le 2\Gamma(A) \, .
	\end{equation*}
	So we have the second inequality of \eqref{ineq:strong_L1_poincare} by operator monotonicity of $x\mapsto x^{1/2}$. Now let us prove the first inequality of \eqref{ineq:strong_L1_poincare}. We denote by $(A,B)\mapsto \langle A,B\rangle:=2^{-n}\tr(A^* B)$ the normalized Hilbert--Schmidt inner product.  
	By semigroup interpolation \eqref{eq:defn of semigroup} and duality,
	\begin{align*}
		\|A-2^{-n}\tr(A)\|_1
		&=\sup_{\|B\|\le 1}|\langle A-2^{-n}\tr(A),B\rangle| \\
		&\overset{\eqref{eq:defn of semigroup}}{=}\sup_{\|B\|\le 1}\left|\int_0^\infty\langle \mathcal L P_t(A),B\rangle dt\right| \\
		&\le \sup_{\|B\|\le 1}\int_0^\infty\sum_{j=1}^{n}\left|\langle d_j A, d_j P_t B\rangle \right|\,dt\,.
	\end{align*}
	Now recall that we have the following inequality: 
	\begin{equation}\label{ineq:vector_valued}
	\left|\sum_{j=1}^{n}\langle X_j, Y_j\rangle\right|\le \left\|\left(\sum_{j=1}^{n} X_j^\ast X_j\right)^{1/2}\right\|_1\cdot \left\|\sum_{j=1}^{n}Y^\ast_j Y_j\right\|^{1/2} \,.
	\end{equation}
	To prove it, form the operators $X:=\sum_{j=1}^{n}X_j\otimes \ket{j}\bra{1}$ and $Y:=\sum_{j=1}^{n}Y_j\otimes \ket{j}\bra{1}$. Then H\"older's inequality gives 
	\begin{equation*}
	\|X^\ast Y\|_1\le \|X\|_1\cdot \|Y\|=\|\left(X^\ast X\right)^{1/2}\|_1\cdot \|Y^\ast Y\|^{1/2}\, ,
	\end{equation*}
	which, together with
	$$\left|\sum_{j=1}^{n}\langle X_j, Y_j\rangle\right|\le \left\|\sum_{j=1}^{n}X_j^\ast Y_j\right\|_1=\|X^\ast Y\|_1\, ,$$
	yields \eqref{ineq:vector_valued}.
	Now apply \eqref{ineq:vector_valued} to $(X_j,Y_j)=(d_j A, d_j P_t B)$ to get
	\begin{equation*}
	\left|\sum_{j=1}^{n}\langle d_j A, d_j P_t B\rangle \right|
	\le \left\|\left(\sum_{j=1}^{n}d_j(A)^\ast d_j(A)\right)^{1/2}\right\|_1\cdot \left\|\sum_{j=1}^{n}d_j (P_t B)^\ast d_j (P_t B) \right\|^{1/2}\, .
	\end{equation*}
To conclude, we use Lemma \ref{lem:technical lemma} together with
\begin{equation*}
	   \int_0^\infty \frac{dt}{\sqrt{e^t-1}}	=\pi\,. 
	\qedhere
	\end{equation*}

\end{proof}

\section{Discussions}

	We end this paper with the following discussions. 
	 
	 \subsection{Equivalence between log-Sobolev and Talagrand's inequalities }
In Theorem \ref{thm:general_Talagrand}, we derived a general noncommutative extension of Talagrand's inequality. Our proof requires the joint use of the hypercontractivity inequality \ref{(A4)} with the intertwining relation \ref{(A5)}. It is hence legitimate to ask whether, in return, such Talagrand-type inequalities imply hypercontractivity. This question was answered in the positive in the classical, continuous setting in \cite[Proposition 1]{BOBKOV1999281}, and later on for discrete spaces in \cite{vollering2016talagrand}. 
It would be interesting to consider the similar problem in the quantum setting, which we leave to  future work.

	 \subsection{Learning low-degree  quantum Boolean functions}
	 	An alternative notion of complexity than the support condition for $k$-juntas is that of the degree: a bounded function $f:\Omega_n\to [-1,1]$ is said to have degree at most $d\in \{1,\dots, n\}$ if for any string $s\in \{-1,1\}^n$ with Hamming weight $|s|>d$, the Fourier coefficient $\widehat{f}(s)=0$. In particular, Boolean functions of degree at most $d$ are $d2^{d-1}$ juntas \cite{nisan1994degree}. As a main tool for the result, the authors derived a simple lower bound on the degree of the function in terms of its total influence. This observation can be used in conjunction with the Goldreich--Levin algorithm in order to devise a learning algorithm which makes $\operatorname{poly}(n)$ random queries to $f$. More efficient algorithms were proposed in the past decades \cite{linial1993constant,iyer2021tight,mansour1994learning}. However all these algorithm have a query complexity scaling polynomially with $n$.  In the recent article \cite{eskenazis2022learning}, the authors show that any low degree Boolean function can be approximated to $\epsilon$ precision in $L^2$ with probability $1-\delta$ from $\mathcal{O}\big(\operatorname{poly}\big(\frac{1}{\epsilon},d\big)\log\big(\frac{n}{\delta}\big)\big)$ random queries to the function. While this result is incomparable to the ones we report in Section \ref{sec:learning}, it would be interesting to find a quantum extension of it.
The result of \cite{eskenazis2022learning} uses the so-called {\em Bohnenblust--Hille inequalities}. The study of  Bohnenblust--Hille inequalities has a long history and these inequalities have found many applications in various problems. A Boolean analogue was known \cite{defant2019fourier} and has led to interesting applications to learning theory \cite{eskenazis2022learning}. Here we formulate and conjecture a quantum analogue of Bohnenblust--Hille inequality and explain why it is useful to learning problems in the quantum setting.
	 
	 \begin{conjecture}\label{conjectureqBH}
	 	Fix $d\ge 1$. Then there exists $C_d>0$ depending only on $d$ such that for all $n\ge 1$ and all $A\in M_2(\com)^{\otimes n}$ of degree at most $d$ i.e. 
	 	$$A=\sum_{s\in\{0,1,2,3\}^n:|s|\le d}\widehat{A}_s\sigma_s,$$ 
	  we have
	 	\begin{equation}\label{eqconjecture}
	 	\left(\sum_{s\in\{0,1,2,3\}^n:|s|\le d}|\widehat{A}_s|^{\frac{2d}{d+1}}\right)^{\frac{d+1}{2d}}\le C_d \|A\|\, .
	 	\end{equation}
where the degree $|s|$ of a string $s$ is defined as the number of components that are different from $0$.
	 \end{conjecture}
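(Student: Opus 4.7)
The strategy is to import, step by step, the now-standard proof of the polynomial Bohnenblust--Hille inequality on the Boolean cube (as streamlined by Defant, Mastyło and P\'erez) into the present quantum framework. That proof has three ingredients: (i) a multilinear, decoupled Bohnenblust--Hille inequality, proved by induction on $d$ using Blei's arithmetic inequality; (ii) polarization, which turns a degree-$d$ homogeneous polynomial into a symmetric $d$-linear form at a cost $\le d^d/d!$; and (iii) hypercontractivity, which peels off the pure degree-$d$ slice of a polynomial of degree $\le d$ at a cost $e^{O(d)}$. There are clean candidates for each step in the quantum setting, with Lemma \ref{lem:hypercontractivity} substituting for the classical noise operator in (iii).

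For step (i), one would formulate a decoupled inequality on the $d$-fold tensor product $\bigl(M_2(\com)^{\otimes n}\bigr)^{\otimes d}$ for operators of the form
\begin{equation*}
L\;=\;\sum_{\vec s}c_{\vec s}\;\sigma^{(j_1),(1)}_{k_1}\otimes\cdots\otimes\sigma^{(j_d),(d)}_{k_d},
\end{equation*}
where the superscript $(\ell)$ places the $\ell$-th Pauli in the $\ell$-th tensor copy, and prove it by induction on $d$ via Blei's inequality applied to the tensor $(c_{\vec s})$. The slice $\ell^2$-sums produced by Blei equal, up to a normalization, $\bigl\|\tr^{(\ell)}\bigl(\sigma^{(j),(\ell)}_k L\bigr)/2^n\bigr\|_2$, which is a multilinear form in one fewer copy and so is controllable by the inductive hypothesis. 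The base case $d=1$ reads $\sum_{j,k}|a_{j,k}|\le \sqrt 3\,\|A\|$ for $A=\sum_{j,k}a_{j,k}\sigma^{(j)}_k$, and follows by testing $A$ against the product pure state $\rho=\bigotimes_j\tfrac12(\un+\vec r_j\cdot\vec\sigma)$ with $\vec r_j$ the unit Bloch vector in the direction of $(a_{j,1},a_{j,2},a_{j,3})$. Step (iii) is immediate from Lemma \ref{lem:hypercontractivity}: the projection onto the degree-$d$ component is a bounded integral combination of the $P_t$'s.

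The serious obstacle is step (ii), a \emph{noncommutative polarization formula}. Given a homogeneous degree-$d$ operator $A=\sum_{|s|=d}\widehat A_s\sigma_s$, the natural multilinear lift $L_A$ should place the $d$ non-trivial Pauli factors of each $\sigma_s$ on distinct tensor copies, symmetrized over which factor lands on which copy. One would like to show $\|L_A\|\le P_d\,\|A\|$ with $P_d$ independent of $n$. In the commutative Boolean case this follows from the Aron--Berner polarization identity $L_P(x_1,\dots,x_d)=\frac{1}{d!\,2^d}\sum_{\epsilon}\epsilon_1\cdots\epsilon_d\,P(\sum_\ell \epsilon_\ell x_\ell)$; its direct quantum analogue fails because the evaluations $P(\sum_\ell \epsilon_\ell x_\ell)$ require the $x_\ell$ to commute, whereas the three Pauli directions at a fixed qubit are maximally non-commutative. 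A promising remedy is to replace Aron--Berner by a completely bounded Khintchine-type inequality for Pauli tensors, in the spirit of the noncommutative Khintchine inequalities of Lust-Piquard and Pisier, and to then average these estimates under the symmetric group action on copies. Establishing such an estimate with a dimension-free constant, and verifying that it meshes cleanly with the decoupled inequality of step (i), is the main technical hurdle and the reason that \eqref{eqconjecture} is left open here.
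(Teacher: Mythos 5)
There is nothing in the paper to compare your attempt against: the statement you were asked to prove is stated in the paper only as a conjecture (Conjecture \ref{conjectureqBH}), and the authors give no proof of it — they only explain how it would imply an efficient learning algorithm. So the relevant question is whether your proposal closes the conjecture on its own, and it does not: as you yourself concede, the argument is a programme with a missing central step rather than a proof. Steps (i) and (iii) of your outline are plausible adaptations of the Defant--Masty\l{}o--P\'erez scheme — the base case $d=1$ via testing against product Bloch states is correct, and the degree-$d$ projection is indeed a bounded combination of the $P_t$'s by Lemma \ref{lem:hypercontractivity} together with the Fourier multiplier structure of the semigroup — but the entire reduction from \eqref{eqconjecture} to a decoupled multilinear inequality hinges on step (ii), a polarization-type bound $\lVert L_A\rVert\le P_d\lVert A\rVert$ with $P_d$ independent of $n$, and no proof of such a bound is offered. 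This is exactly where the commutative argument breaks: the Aron--Berner identity evaluates the polynomial at linear combinations of the variables, which is meaningless when the three Pauli directions at a single site do not commute, and invoking ``a completely bounded Khintchine-type inequality in the spirit of Lust-Piquard and Pisier'' is a research direction, not a lemma. Without a dimension-free substitute for polarization (or a different route bypassing multilinearization altogether), the chain of estimates never connects $\lVert A\rVert$ to the $\ell^{2d/(d+1)}$ sum of Fourier coefficients, so the conjecture remains open under your proposal just as it does in the paper.

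One further caution: even granting a decoupled multilinear inequality in step (i), your inductive use of Blei's inequality needs the slice norms $\bigl\lVert 2^{-n}\tr^{(\ell)}\bigl(\sigma^{(j),(\ell)}_k L\bigr)\bigr\rVert_2$ to be dominated by operator norms of lower-order multilinear forms with constants that do not accumulate a dependence on $n$; in the commutative case this is automatic because the characters form a commuting family of unitaries and restriction to a slice is a conditional expectation, whereas here the analogous ``partial Fourier coefficient'' maps are not completely positive projections onto subalgebras, so the inductive step also requires an argument you have not supplied. In short, the proposal correctly identifies the shape a proof might take and honestly flags the obstruction, but it neither proves the statement nor matches any proof in the paper, since the paper contains none.
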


If Conjecture \ref{conjectureqBH} holds, we expect that it can be used in a similar fashion as in \cite{eskenazis2022learning} in order to devise a highly efficient algorithm for learning quantum Boolean functions of small degree in terms of query complexity.

	In fact, this conjecture has been resolved after an earlier version of this paper was post out. It was first resolved by Huang, Chen and Preskill \cite{HCP22}. Later on, another proof was found by Volberg and Zhang \cite{VZ22}.

\subsection*{Data availability statement}
No data was generated as part of this work. 

\appendix
\section{Proofs from Section \ref{sec:generalization}}

\begin{proof}[Proof of Lemma \ref{lem:reverse_Poincare}]
	Let $F(s)=P_s(P_{t-s}(x)^\ast P_{t-s}(x))$. Since $x\in\mathcal A$, the map $F$ is $\sigma$-weakly differentiable on $[0,t]$ and
	\begin{align*}
		F'(s)&=-P_s\L(P_{t-s}(x)^\ast P_{t-s}(x))+P_s((\L P_{t-s}(x))^\ast P_{t-s}(x))+P_s(P_{t-s}(x)^\ast \L P_{t-s}(x))\\
		&=2P_s\Gamma(P_{t-s}(x)).
	\end{align*}
	Hence if $\omega\in \mathcal M_\ast$ is positive, then
	\begin{align*}
		\omega(P_t(x^\ast x)-P_t(x)^\ast P_t(x))&=\omega(F(t)-F(0))\\
		&=\int_0^t \omega(F'(s))\,ds\\
		&=2\int_0^t \omega(P_s\Gamma(P_{t-s}(x)))\,ds\\
		&\geq 2\omega(\Gamma(P_t(x)))\int_0^t e^{2Ks}\,ds\\
		&=e_K(t)\omega(\Gamma(P_t(x))).
	\end{align*}
	This implies the first inequality. The second inequality follows from the fact that $P_t$ is unital and positive.
	
	The last part is easy to see for $K\geq 0$. If $K< 0$ and $0\leq t\leq -\frac 1{2K}$, then
	\begin{equation*}
		e_K(t)=-\frac{1-e^{2K t}}{K}\geq 2(1-e^{-1})t\geq t.\qedhere
	\end{equation*}
	
\end{proof}

\begin{proof}[Proof of Theorem \ref{thm:general_Talagrand}]
	As a consequence of \ref{(A3)},
	\begin{equation*}
		\lVert i_2(x-\phi(x)\un)\rVert^2\leq \frac 1{1-e^{-2\lambda T}}(\lVert i_2(x)\rVert^2-\lVert i_2(P_T(x))\rVert^2).
	\end{equation*}

	Since $(P_t^{(2)})_{t\ge 0}$ is a symmetric contraction semigroup on $L^2(\mathcal M,\phi)$, the map $t\mapsto \lVert i_2(P_t(x))\rVert^2$ is differentiable and
	\begin{align*}
		\lVert i_2(x)\rVert^2-\lVert i_2(P_T(x))\rVert^2&=2\int_0^T \langle i_2(P_t(x)),i_2(\L P_t(x))\rangle\,dt\\
		&=2\int_0^T\sum_{j\in\mathcal J}\lVert i_2(d_j(P_t(x)))\rVert^2\,dt\\
		&=4\int_0^{T/2}\sum_{j\in\mathcal J}\lVert i_2(d_j(P_{2t}(x)))\rVert^2\,dt,
	\end{align*}
	where the second to last step used (\ref{(A2-1)}).

	By \ref{(A5)} and \ref{(A4)},
	\begin{equation*}
		\lVert i_2(d_j(P_{2t}(x)))\rVert\leq e^{-\mu t}\lVert i_2(P_t(d_j(P_t(x))))\rVert\leq e^{-\mu t}\lVert i_{p(t)}(d_j(P_t(x)))\rVert
	\end{equation*}
	with $p(t)=1+e^{-2\alpha t}$.

	By interpolation,
	\begin{equation}\label{eq:interpolation_bound}
		\lVert i_{p(t)}(d_j(P_t(x)))\rVert\leq \lVert i_1(d_j(P_t(x)))\rVert^{1/p(t)}\lVert d_j(P_t(x))\rVert^{1-1/p(t)}.
	\end{equation}
	To the first factor we apply \ref{(A5)} again and the contractivity of $P_t^{(1)}$ to get 
	\begin{equation}\label{eq:bound_L1}
		\lVert i_1(d_j(P_t(x)))\rVert\leq e^{-\mu t}\lVert i_1(P_t(d_j(x)))\rVert\leq e^{-\mu t}\lVert i_1(d_j(x))\rVert.
	\end{equation}

	To bound the second factor in (\ref{eq:interpolation_bound}) in the range $\left[0,\frac 1{2K_-}\right]$, we use Lemma \ref{lem:reverse_Poincare} and (\ref{(A2-2)}) to obtain
	\begin{equation}\label{eq:bound_Linfinity}
		\lVert d_j(P_t(x))\rVert\leq M\lVert \Gamma(P_t(x))\rVert^{1/2}\leq \frac {M}{\sqrt t}.
	\end{equation}

	If we plug (\ref{eq:bound_L1}) and (\ref{eq:bound_Linfinity}) into (\ref{eq:interpolation_bound}), we obtain
	\begin{equation*}
		\lVert i_{p(t)}(d_j(P_t(x)))\rVert\leq M e^{-\mu t/p(t)} t^{-(1-1/p(t))/2}\lVert i_1(d_j(x))\rVert^{1/p(t)}
	\end{equation*}

	for $0\leq t\leq 1/2K_-$. Therefore,
	\begin{align*}
		\lVert i_2(x)\rVert^2-&\lVert i_2(P_T(x))\rVert^2\\
		&\leq 4M\int_0^{T/2} e^{-2\mu(1+1/p(t)) t}t^{-(1-1/p(t))}\sum_{j\in\mathcal J}\lVert i_1(d_j(x))\rVert^{2/p(t)}\,dt\\
		&=4M\sum_{j\in\mathcal J}\lVert i_1(d_j(x))\rVert\int_0^{T/2}e^{-2\mu(1+1/p(t)) t}t^{-(1-1/p(t))}\lVert i_1(d_j(x))\rVert^{2/p(t)-1}\,dt
	\end{align*}
provided $T\leq 1 /K_-$. With $T=\min\{2,1/K_-,1/\alpha\}$, the result follows from Lemma \ref{lem:integral_bound}.
\end{proof}

\begin{proof}[Proof of Theorem \ref{ThmKKLgeneral}]
By Theorem \ref{thm:general_Talagrand} there exists a constant $C>0$ with dependence as indicated in the statement of this theorem such that
\begin{equation*}
1/C\leq \sum_{j\in\mathcal J}\frac{\Inf^1_j(x)(1+\Inf_j(x))}{(1+\log^+(1/\Inf^1_j(x)))^{1/2}}
\end{equation*}
for all $x\in\mathcal A$ with $\lVert i_2(x)\rVert=1$, $\lVert x\rVert\leq 1$ and $\phi(x)=0$.

Now the conclusion follows from Lemma \ref{lem:auxiliary_KKL}.
\end{proof}

\begin{proof}[Proof of Theorem \ref{thm:Friedgut_general_vN}]
The proof follows the same lines as the one of Theorem \ref{thm:L1friedgut}. Let us first consider the case $\Inf^2(x)\geq 1$. Let $\mathcal I=\{j\in\mathcal J\mid\Inf^1_j(x)\leq \eta\}$ for some number $\eta>0$. By the triangle inequality and the contractivity of $E_{\mathcal I}$, we have
\begin{align*}
\lVert i_2(x)-E_{\mathcal I}(i_2(x))\rVert&\leq \lVert i_2(x-P_t(x))\rVert+\lVert (\mathrm{id}-E_{\mathcal I})(i_2(P_t(x)))\rVert+\lVert E_{\mathcal I}(i_2(x-P_t(x)))\rVert\\
&\leq 2\lVert i_2(x-P_t(x))\rVert+\lVert (\mathrm{id}-E_{\mathcal I})(i_2(P_t(x)))\rVert.
\end{align*}
It follows from Lemmas \ref{lem:cor_Poincare} and \ref{lem:elem_spectral_th} that
\begin{equation*}
\lVert i_2(x)-E_{\mathcal I}(i_2(x))\rVert\leq 2\sqrt{t\Inf^2(x)}+\frac{e^{-\mu t}}{\nu}(\eta\Inf^1(x))^{q(t)/2}\Inf^2(x)^{(1-q(t))/2}
\end{equation*}
with $q(t)=\frac{1-e^{-2\alpha t}}{1+e^{-2\alpha t}}$. For 
\begin{equation*}
t=\frac{\epsilon^2}{16\Inf^2(x)},\quad \eta=\frac{\left(\frac{\nu\epsilon}{2e^{\mu_- t}}\right)^{2/q(t)}}{\Inf^1(x)\Inf^2(x)^{(1-q(t))/q(t)}}
\end{equation*}
we obtain $\lVert i_2(x)-E_{\mathcal I}(i_2(x))\rVert\leq \epsilon$.

Moreover, by definition of $\mathcal I$, we have $\Inf^1_j(x)>\eta$ for every $j\in\mathcal J\setminus \mathcal I$. Thus 
\begin{equation*}
\lvert\mathcal J\setminus\mathcal I\rvert\leq \frac{\Inf^1(x)}{\eta}=\left(\frac{2 e^{\mu_- t}}{\nu\epsilon}\right)^{2/q(t)}\Inf^1(x)^2\Inf^2(x)^{(1-q(t))/q(t)}.
\end{equation*} 
The elementary inequality $r\leq e^{r}-1$ for $r\geq 0$ together with $\Inf^2(x)\geq 1$ and $\epsilon\leq 2/\nu$ implies
\begin{equation*}
\frac{2}{q(t)}\leq \frac{e^{2\alpha t}+1}{\alpha t}= \frac{16\Inf^2(x)}{\alpha\epsilon^2}\left(\exp\left(\frac{2\alpha\epsilon^2}{16\Inf^2(x)}\right)+1\right)\leq \frac{16\Inf^2(x)}{\alpha\epsilon^2}\left(\exp\left(\frac{\alpha}{2\nu^2}\right)+1\right).
\end{equation*}
Let $C=\frac{16}{\alpha}\left(\exp\left(\frac{\alpha}{2\nu^2}\right)+1\right)$. Hence
\begin{align*}
\lvert\mathcal J\setminus\mathcal I\rvert&\leq \left(\frac{2 e^{\mu_- t}}{\nu\epsilon}\right)^{C\Inf^2(x)/\epsilon^2}\Inf^1(x)^2\Inf^2(x)^{(1-q(t))/q(t)}\\
&=e^{C\mu_-}\left(\frac{2}{\nu\epsilon}\right)^{C\Inf^2(x)/\epsilon^2}\Inf^1(x)^2\Inf^2(x)^{(1-q(t))/q(t)}.
\end{align*}
Furthermore,
\begin{equation*}
\frac{1-q(t)}{q(t)}\leq \frac 1{\alpha t}=\frac{16\Inf^2(x)}{\alpha \epsilon^2}\leq \frac{C\Inf^2(x)}{\epsilon^2}.
\end{equation*}
Altogether,
\begin{equation*}
\lvert \mathcal J\setminus \mathcal I\rvert\leq\Inf^1(x)^2\exp\left(C\mu_- +\frac{C\Inf^2(x)}{\epsilon^2}\log\frac{2\Inf^2(x)}{\nu\epsilon}\right).
\end{equation*}
The bound in the case $\Inf^2(x)<1$ follows by rescaling.
\end{proof}

\newcommand{\etalchar}[1]{$^{#1}$}


\begin{thebibliography}{MAVAV16}

\bibitem[AcS07]{atici2007quantum}
Alp At\i~c\i and Rocco~A. Servedio.
\newblock Quantum algorithms for learning and testing juntas.
\newblock {\em Quantum Inf. Process.}, 6(5):323--348, 2007.

\bibitem[ADFS04]{alon2004graph}
N.~Alon, I.~Dinur, E.~Friedgut, and B.~Sudakov.
\newblock Graph products, {F}ourier analysis and spectral techniques.
\newblock {\em Geom. Funct. Anal.}, 14(5):913--940, 2004.

\bibitem[ADH17]{auffinger201750}
Antonio Auffinger, Michael Damron, and Jack Hanson.
\newblock {\em 50 years of first-passage percolation}, volume~68 of {\em University Lecture Series}.
\newblock American Mathematical Society, Providence, RI, 2017.

\bibitem[Aus16]{austin2016failure}
Tim Austin.
\newblock On the failure of concentration for the $\ell^\infty$-ball.
\newblock {\em Israel Journal of Mathematics}, 211(1):221--238, 2016.

\bibitem[Bar17]{bardet2017estimating}
Ivan Bardet.
\newblock Estimating the decoherence time using non-commutative functional inequalities.
\newblock {\em arXiv preprint arXiv:1710.01039}, 2017.

\bibitem[BDR20]{beigi2020quantum}
Salman Beigi, Nilanjana Datta, and Cambyse Rouz\'{e}.
\newblock Quantum reverse hypercontractivity: its tensorization and application to strong converses.
\newblock {\em Comm. Math. Phys.}, 376(2):753--794, 2020.

\bibitem[Bei21]{beigi2021improved}
Salman Beigi.
\newblock Improved quantum hypercontractivity inequality for the qubit depolarizing channel.
\newblock {\em J. Math. Phys.}, 62(12):Paper No. 122201, 14, 2021.

\bibitem[Ber67]{bernstein1967maximally}
Arthur~J. Bernstein.
\newblock Maximally connected arrays on the {$n$}-cube.
\newblock {\em SIAM J. Appl. Math.}, 15:1485--1489, 1967.

\bibitem[BGJ{\etalchar{+}}22]{bu2022complexity}
Kaifeng Bu, Roy~J Garcia, Arthur Jaffe, Dax~Enshan Koh, and Lu~Li.
\newblock Complexity of quantum circuits via sensitivity, magic, and coherence.
\newblock {\em arXiv preprint arXiv:2204.12051}, 2022.

\bibitem[BGL14]{BGL2014book}
Dominique Bakry, Ivan Gentil, and Michel Ledoux.
\newblock {\em Analysis and geometry of {M}arkov diffusion operators}, volume 348 of {\em Grundlehren der mathematischen Wissenschaften [Fundamental Principles of Mathematical Sciences]}.
\newblock Springer, Cham, 2014.

\bibitem[BH99]{BOBKOV1999281}
S.~G. Bobkov and C.~Houdr\'{e}.
\newblock A converse {G}aussian {P}oincar\'{e}-type inequality for convex functions.
\newblock {\em Statist. Probab. Lett.}, 44(3):281--290, 1999.

\bibitem[BKS99]{benjamini1999noise}
Itai Benjamini, Gil Kalai, and Oded Schramm.
\newblock Noise sensitivity of {B}oolean functions and applications to percolation.
\newblock {\em Inst. Hautes \'{E}tudes Sci. Publ. Math.}, (90):5--43, 1999.

\bibitem[BKS03]{BenjaminiKalaiSchramm03}
Itai Benjamini, Gil Kalai, and Oded Schramm.
\newblock First passage percolation has sublinear distance variance.
\newblock {\em Ann. Probab.}, 31(4):1970--1978, 2003.

\bibitem[BL96]{bakry1996levy}
D.~Bakry and M.~Ledoux.
\newblock L\'{e}vy-{G}romov's isoperimetric inequality for an infinite-dimensional diffusion generator.
\newblock {\em Invent. Math.}, 123(2):259--281, 1996.

\bibitem[BO08]{BO08}
Nathanial~P. Brown and Narutaka Ozawa.
\newblock {\em {$C^*$}-algebras and finite-dimensional approximations}, volume~88 of {\em Graduate Studies in Mathematics}.
\newblock American Mathematical Society, Providence, RI, 2008.

\bibitem[BOL85]{ben1985collective}
Michael Ben-Or and Nathan Linial.
\newblock Collective coin flipping, robust voting schemes and minima of {B}anzhaf values.
\newblock In {\em 26th Annual Symposium on Foundations of Computer Science (sfcs 1985)}, pages 408--416. IEEE, 1985.

\bibitem[Bop97]{boppana1997average}
Ravi~B. Boppana.
\newblock The average sensitivity of bounded-depth circuits.
\newblock {\em Inform. Process. Lett.}, 63(5):257--261, 1997.

\bibitem[Bou02]{bourgain2002distribution}
J.~Bourgain.
\newblock On the distributions of the {F}ourier spectrum of {B}oolean functions.
\newblock {\em Israel J. Math.}, 131:269--276, 2002.

\bibitem[Bou17]{bouyrie2017unified}
Rapha{\"e}l Bouyrie.
\newblock An unified approach to the junta theorem for discrete and continuous models.
\newblock {\em arXiv preprint arXiv:1702.00753}, 2017.

\bibitem[BT96]{bshouty1996fourier}
Nader~H. Bshouty and Christino Tamon.
\newblock On the {F}ourier spectrum of monotone functions.
\newblock {\em J. ACM}, 43(4):747--770, 1996.

\bibitem[BT06]{bobkov2006modified}
Sergey~G. Bobkov and Prasad Tetali.
\newblock Modified logarithmic {S}obolev inequalities in discrete settings.
\newblock {\em J. Theoret. Probab.}, 19(2):289--336, 2006.

\bibitem[CEL12]{cel12talagrand}
Dario Cordero-Erausquin and Michel Ledoux.
\newblock Hypercontractive measures, {T}alagrand's inequality, and influences.
\newblock In {\em Geometric aspects of functional analysis}, volume 2050 of {\em Lecture Notes in Math.}, pages 169--189. Springer, Heidelberg, 2012.

\bibitem[CFL00]{cipriani2000spectral}
Fabio Cipriani, Franco Fagnola, and J.~Martin Lindsay.
\newblock Spectral analysis and {F}eller property for quantum {O}rnstein-{U}hlenbeck semigroups.
\newblock {\em Comm. Math. Phys.}, 210(1):85--105, 2000.

\bibitem[Cha14]{chatterjee2014superconcentration}
Sourav Chatterjee.
\newblock {\em Superconcentration and related topics}.
\newblock Springer Monographs in Mathematics. Springer, Cham, 2014.

\bibitem[CM17a]{carlen2017gradient}
Eric~A. Carlen and Jan Maas.
\newblock Gradient flow and entropy inequalities for quantum {M}arkov semigroups with detailed balance.
\newblock {\em J. Funct. Anal.}, 273(5):1810--1869, 2017.

\bibitem[CM17b]{cm2017gradient}
Eric~A. Carlen and Jan Maas.
\newblock Gradient flow and entropy inequalities for quantum {M}arkov semigroups with detailed balance.
\newblock {\em J. Funct. Anal.}, 273(5):1810--1869, 2017.

\bibitem[CNY23]{chen2022testing}
Thomas Chen, Shivam Nadimpalli, and Henry Yuen.
\newblock Testing and learning quantum juntas nearly optimally.
\newblock In {\em Proceedings of the 2023 {A}nnual {ACM}-{SIAM} {S}ymposium on {D}iscrete {A}lgorithms ({SODA})}, pages 1163--1185. SIAM, Philadelphia, PA, 2023.

\bibitem[CS08]{carbone2008hypercontractivity}
Raffaella Carbone and Emanuela Sasso.
\newblock Hypercontractivity for a quantum {O}rnstein-{U}hlenbeck semigroup.
\newblock {\em Probab. Theory Related Fields}, 140(3-4):505--522, 2008.

\bibitem[DFKO06]{dinur2006fourier}
Irit Dinur, Ehud Friedgut, Guy Kindler, and Ryan O'Donnell.
\newblock On the {F}ourier tails of bounded functions over the discrete cube (extended abstract).
\newblock In {\em S{TOC}'06: {P}roceedings of the 38th {A}nnual {ACM} {S}ymposium on {T}heory of {C}omputing}, pages 437--446. ACM, New York, 2006.

\bibitem[DMoP19]{defant2019fourier}
Andreas Defant, Mieczys\l~aw Masty\l~o, and Antonio P\'{e}rez.
\newblock On the {F}ourier spectrum of functions on {B}oolean cubes.
\newblock {\em Math. Ann.}, 374(1-2):653--680, 2019.

\bibitem[DN08]{dowling2008geometry}
Mark~R. Dowling and Michael~A. Nielsen.
\newblock The geometry of quantum computation.
\newblock {\em Quantum Inf. Comput.}, 8(10):861--899, 2008.

\bibitem[DPMRF23]{de2022limitations}
Giacomo De~Palma, Milad Marvian, Cambyse Rouz\'e, and Daniel~Stilck Fran{\c{c}}a.
\newblock Limitations of variational quantum algorithms: A quantum optimal transport approach.
\newblock {\em PRX Quantum}, 4:010309, Jan 2023.

\bibitem[DR20]{datta2020relating}
Nilanjana Datta and Cambyse Rouz\'{e}.
\newblock Relating relative entropy, optimal transport and {F}isher information: a quantum {HWI} inequality.
\newblock {\em Ann. Henri Poincar\'{e}}, 21(7):2115--2150, 2020.

\bibitem[DS05]{dinur2005hardness}
Irit Dinur and Samuel Safra.
\newblock On the hardness of approximating minimum vertex cover.
\newblock {\em Ann. of Math. (2)}, 162(1):439--485, 2005.

\bibitem[dW08]{dewolf2008brief}
Ronald de~Wolf.
\newblock A brief introduction to {F}ourier analysis on the {B}oolean cube.
\newblock {\em Theory of Computing}, pages 1--20, 2008.

\bibitem[EF18]{erbar2018poincare}
Matthias Erbar and Max Fathi.
\newblock Poincar\'{e}, modified logarithmic {S}obolev and isoperimetric inequalities for {M}arkov chains with non-negative {R}icci curvature.
\newblock {\em J. Funct. Anal.}, 274(11):3056--3089, 2018.

\bibitem[EI22]{eskenazis2022learning}
Alexandros Eskenazis and Paata Ivanisvili.
\newblock Learning low-degree functions from a logarithmic number of random queries.
\newblock In {\em S{TOC} '22---{P}roceedings of the 54th {A}nnual {ACM} {SIGACT} {S}ymposium on {T}heory of {C}omputing}, pages 203--207. ACM, New York, [2022] \copyright 2022.

\bibitem[Eis21]{eisert2021entangling}
J.~Eisert.
\newblock Entangling power and quantum circuit complexity.
\newblock {\em Phys. Rev. Lett.}, 127(2):Paper No. 020501, 5, 2021.

\bibitem[FK96]{friedgut1996every}
Ehud Friedgut and Gil Kalai.
\newblock Every monotone graph property has a sharp threshold.
\newblock {\em Proc. Amer. Math. Soc.}, 124(10):2993--3002, 1996.

\bibitem[FKN02]{friedgut2002boolean}
Ehud Friedgut, Gil Kalai, and Assaf Naor.
\newblock Boolean functions whose {F}ourier transform is concentrated on the first two levels.
\newblock {\em Adv. in Appl. Math.}, 29(3):427--437, 2002.

\bibitem[Fri98]{friedgut1998boolean}
Ehud Friedgut.
\newblock Boolean functions with low average sensitivity depend on few coordinates.
\newblock {\em Combinatorica}, 18(1):27--35, 1998.

\bibitem[FRS94]{fagnola1994quantum}
F~Fagnola, R~Rebolledo, and C~Saavedra.
\newblock Quantum flows associated to a class of laser master equations.
\newblock {\em J. Math. Phys}, 1(35):1--12, 1994.

\bibitem[GR21]{gao2021ricci}
Li~Gao and Cambyse Rouz{\'e}.
\newblock Ricci curvature of quantum channels on non-commutative transportation metric spaces.
\newblock {\em arXiv preprint arXiv:2108.10609}, 2021.

\bibitem[GS15]{garban2014noise}
Christophe Garban and Jeffrey~E. Steif.
\newblock {\em Noise sensitivity of {B}oolean functions and percolation}, volume~5 of {\em Institute of Mathematical Statistics Textbooks}.
\newblock Cambridge University Press, New York, 2015.

\bibitem[Har64]{harper1964optimal}
L.~H. Harper.
\newblock Optimal assignments of numbers to vertices.
\newblock {\em J. Soc. Indust. Appl. Math.}, 12:131--135, 1964.

\bibitem[Har76]{hart1976note}
Sergiu Hart.
\newblock A note on the edges of the {$n$}-cube.
\newblock {\em Discrete Math.}, 14(2):157--163, 1976.

\bibitem[HCP23]{HCP22}
Hsin-Yuan Huang, Sitan Chen, and John Preskill.
\newblock Learning to predict arbitrary quantum processes.
\newblock {\em PRX Quantum}, 4(4):040337, 2023.

\bibitem[Hol11]{holevo2011probabilistic}
Alexander Holevo.
\newblock {\em Probabilistic and statistical aspects of quantum theory}, volume~1 of {\em Quaderni/Monographs}.
\newblock Edizioni della Normale, Pisa, second edition, 2011.

\bibitem[Hs01]{haastad2001some}
Johan H\aa~stad.
\newblock Some optimal inapproximability results.
\newblock {\em J. ACM}, 48(4):798--859, 2001.

\bibitem[ILvHV18]{ivanisvili2018improving}
Paata Ivanisvili, Dong Li, Ramon van Handel, and Alexander Volberg.
\newblock Improving constant in end-point {P}oincar\'{e} inequality on {H}amming cube.
\newblock {\em arXiv preprint arXiv:1811.05584}, 2018.

\bibitem[IRR{\etalchar{+}}21]{iyer2021tight}
Siddharth Iyer, Anup Rao, Victor Reis, Thomas Rothvoss, and Amir Yehudayoff.
\newblock Tight bounds on the {F}ourier growth of bounded functions on the hypercube.
\newblock {\em arXiv preprint arXiv:2107.06309}, 2021.

\bibitem[JPP{\etalchar{+}}15]{junge2015hypercontractivity}
Marius Junge, Carlos Palazuelos, Javier Parcet, Mathilde Perrin, and \'{E}ric Ricard.
\newblock Hypercontractivity for free products.
\newblock {\em Ann. Sci. \'{E}c. Norm. Sup\'{e}r. (4)}, 48(4):861--889, 2015.

\bibitem[JPPP17]{junge2013hypercontractivity}
Marius Junge, Carlos Palazuelos, Javier Parcet, and Mathilde Perrin.
\newblock Hypercontractivity in group von {N}eumann algebras.
\newblock {\em Mem. Amer. Math. Soc.}, 249(1183):xii+83, 2017.

\bibitem[JZ15]{junge2015noncommutative}
Marius Junge and Qiang Zeng.
\newblock Noncommutative martingale deviation and {P}oincar\'{e} type inequalities with applications.
\newblock {\em Probab. Theory Related Fields}, 161(3-4):449--507, 2015.

\bibitem[Kar76]{Karpovsky76}
M.~G. Karpovsky.
\newblock {\em Finite orthogonal series in the design of digital devices}.
\newblock Halsted Press [John Wiley \& Sons], New York-Toronto; Israel Universities Press, Jerusalem, 1976.
\newblock Analysis, synthesis and optimization.

\bibitem[KKL88]{kkl89influence}
J.~Kahn, G.~Kalai, and N.~Linial.
\newblock The influence of variables on {B}oolean functions.
\newblock In {\em [Proceedings 1988] 29th Annual Symposium on Foundations of Computer Science}, pages 68--80, 1988.

\bibitem[KKW16]{keyl2016schwartz}
Michael Keyl, Jukka Kiukas, and Reinhard~F Werner.
\newblock Schwartz operators.
\newblock {\em Reviews in Mathematical Physics}, 28(03):1630001, 2016.

\bibitem[KMS12]{Keller12}
Nathan Keller, Elchanan Mossel, and Arnab Sen.
\newblock Geometric influences.
\newblock {\em Ann. Probab.}, 40(3):1135--1166, 2012.

\bibitem[KN06]{khot2006nonembeddability}
Subhash Khot and Assaf Naor.
\newblock Nonembeddability theorems via {F}ourier analysis.
\newblock {\em Math. Ann.}, 334(4):821--852, 2006.

\bibitem[Kos84]{KOSAKI1984}
Hideki Kosaki.
\newblock Applications of the complex interpolation method to a von {N}eumann algebra: noncommutative {$L\sp{p}$}-spaces.
\newblock {\em J. Funct. Anal.}, 56(1):29--78, 1984.

\bibitem[KS02]{kindler2002noise}
Guy Kindler and Shmuel Safra.
\newblock Noise-resistant {B}oolean functions are juntas.
\newblock {\em preprint}, 5(7):19, 2002.

\bibitem[KT13]{kastoryano2013quantum}
Michael~J. Kastoryano and Kristan Temme.
\newblock Quantum logarithmic {S}obolev inequalities and rapid mixing.
\newblock {\em J. Math. Phys.}, 54(5):052202, 30, 2013.

\bibitem[Led00]{ledoux2000geometry}
Michel Ledoux.
\newblock The geometry of {M}arkov diffusion generators.
\newblock {\em Ann. Fac. Sci. Toulouse Math. (6)}, 9(2):305--366, 2000.
\newblock Probability theory.

\bibitem[Led01]{ledoux2001concentration}
Michel Ledoux.
\newblock {\em The concentration of measure phenomenon}, volume~89 of {\em Mathematical Surveys and Monographs}.
\newblock American Mathematical Society, Providence, RI, 2001.

\bibitem[Led19]{ledoux2019four}
Michel Ledoux.
\newblock Four {T}alagrand inequalities under the same umbrella.
\newblock {\em arXiv preprint arXiv:1909.00363}, 2019.

\bibitem[Lin64]{lindsey1964assignment}
John~H. Lindsey, II.
\newblock Assignment of numbers to vertices.
\newblock {\em Amer. Math. Monthly}, 71:508--516, 1964.

\bibitem[LMN93]{linial1993constant}
Nathan Linial, Yishay Mansour, and Noam Nisan.
\newblock Constant depth circuits, {F}ourier transform, and learnability.
\newblock {\em J. Assoc. Comput. Mach.}, 40(3):607--620, 1993.

\bibitem[Man94]{mansour1994learning}
Yishay Mansour.
\newblock Learning {B}oolean functions via the {F}ourier transform.
\newblock In {\em Theoretical advances in neural computation and learning}, pages 391--424. Springer, 1994.

\bibitem[MAVAV16]{marien2016entanglement}
Micha\"{e}l Mari\"{e}n, Koenraad M.~R. Audenaert, Karel Van~Acoleyen, and Frank Verstraete.
\newblock Entanglement rates and the stability of the area law for the entanglement entropy.
\newblock {\em Comm. Math. Phys.}, 346(1):35--73, 2016.

\bibitem[Mil09]{milman2009role}
Emanuel Milman.
\newblock On the role of convexity in isoperimetry, spectral gap and concentration.
\newblock {\em Invent. Math.}, 177(1):1--43, 2009.

\bibitem[Mil10]{milman2010isoperimetric}
Emanuel Milman.
\newblock Isoperimetric and concentration inequalities: equivalence under curvature lower bound.
\newblock {\em Duke Math. J.}, 154(2):207--239, 2010.

\bibitem[MKN17]{matsuta2017improving}
Takuro Matsuta, Tohru Koma, and Shu Nakamura.
\newblock Improving the {L}ieb-{R}obinson bound for long-range interactions.
\newblock {\em Ann. Henri Poincar\'{e}}, 18(2):519--528, 2017.

\bibitem[MO10a]{mo10quantumboolean}
Ashley Montanaro and Tobias~J Osborne.
\newblock Quantum {B}oolean functions.
\newblock {\em Chicago Journal OF Theoretical Computer Science}, 1:1--45, 2010.

\bibitem[MO10b]{MO10arxiv}
Ashley Montanaro and Tobias~J Osborne.
\newblock Quantum {B}oolean functions.
\newblock {\em arXiv preprint arXiv:0810.2435}, 2010.

\bibitem[Mos12]{mossel2012quantitative}
Elchanan Mossel.
\newblock A quantitative {A}rrow theorem.
\newblock {\em Probab. Theory Related Fields}, 154(1-2):49--88, 2012.

\bibitem[NDGD06a]{nielsen2006optimal}
Michael~A. Nielsen, Mark~R. Dowling, Mile Gu, and Andrew~C. Doherty.
\newblock Optimal control, geometry, and quantum computing.
\newblock {\em Phys. Rev. A (3)}, 73(6):062323, 7, 2006.

\bibitem[NDGD06b]{nielsen2006quantum}
Michael~A Nielsen, Mark~R Dowling, Mile Gu, and Andrew~C Doherty.
\newblock Quantum computation as geometry.
\newblock {\em Science}, 311(5764):1133--1135, 2006.

\bibitem[Nie06]{nielsen2005geometric}
Michael~A. Nielsen.
\newblock A geometric approach to quantum circuit lower bounds.
\newblock {\em Quantum Inf. Comput.}, 6(3):213--262, 2006.

\bibitem[NS94]{nisan1994degree}
Noam Nisan and Mario Szegedy.
\newblock On the degree of {B}oolean functions as real polynomials.
\newblock {\em Computational complexity}, 4(4):301--313, 1994.

\bibitem[O'D14]{odonnell_2014}
Ryan O'Donnell.
\newblock {\em Analysis of {B}oolean functions}.
\newblock Cambridge University Press, New York, 2014.

\bibitem[OS07]{o2007learning}
Ryan O'Donnell and Rocco~A. Servedio.
\newblock Learning monotone decision trees in polynomial time.
\newblock {\em SIAM J. Comput.}, 37(3):827--844, 2007.

\bibitem[OW13a]{o2013kkl}
Ryan O'Donnell and Karl Wimmer.
\newblock K{KL}, {K}ruskal-{K}atona, and monotone nets.
\newblock {\em SIAM J. Comput.}, 42(6):2375--2399, 2013.

\bibitem[OW13b]{o2013sharpness}
Ryan O'Donnell and Karl Wimmer.
\newblock Sharpness of {KKL} on {S}chreier graphs.
\newblock {\em Electron. Commun. Probab.}, 18:no. 18, 12, 2013.

\bibitem[OZ99]{olkiewicz1999hypercontractivity}
Robert Olkiewicz and Bogus\l~aw Zegarlinski.
\newblock Hypercontractivity in noncommutative {$L_p$} spaces.
\newblock {\em J. Funct. Anal.}, 161(1):246--285, 1999.

\bibitem[Sos18]{sosoe2018fluctuations}
Philippe Sosoe.
\newblock Fluctuations in first-passage percolation.
\newblock In {\em Random growth models}, volume~75 of {\em Proc. Sympos. Appl. Math.}, pages 69--93. Amer. Math. Soc., Providence, RI, 2018.

\bibitem[Tak02]{Tak02}
Masamichi Takesaki.
\newblock {\em Theory of operator algebras. {I}}, volume 124 of {\em Encyclopaedia of Mathematical Sciences}.
\newblock Springer-Verlag, Berlin, 2002.
\newblock Reprint of the first (1979) edition, Operator Algebras and Non-commutative Geometry, 5.

\bibitem[Tak03]{Tak03}
Masamichi Takesaki.
\newblock {\em Theory of operator algebras. {II}}, volume 125 of {\em Encyclopaedia of Mathematical Sciences}.
\newblock Springer-Verlag, Berlin, 2003.
\newblock Operator Algebras and Non-commutative Geometry, 6.

\bibitem[Tal94]{talagrand94russo}
Michel Talagrand.
\newblock On {R}usso's approximate zero-one law.
\newblock {\em Ann. Probab.}, 22(3):1576--1587, 1994.

\bibitem[Tan20]{tanguy2018talagrand}
Kevin Tanguy.
\newblock Talagrand inequality at second order and application to {B}oolean analysis.
\newblock {\em J. Theoret. Probab.}, 33(2):692--714, 2020.

\bibitem[TKR{\etalchar{+}}10]{temme2010chi}
K.~Temme, M.~J. Kastoryano, M.~B. Ruskai, M.~M. Wolf, and F.~Verstraete.
\newblock The {$\chi^2$}-divergence and mixing times of quantum {M}arkov processes.
\newblock {\em J. Math. Phys.}, 51(12):122201, 19, 2010.

\bibitem[TPK14]{temme2014hypercontractivity}
Kristan Temme, Fernando Pastawski, and Michael~J. Kastoryano.
\newblock Hypercontractivity of quasi-free quantum semigroups.
\newblock {\em J. Phys. A}, 47(40):405303, 27, 2014.

\bibitem[VZ23]{VZ22}
Alexander Volberg and Haonan Zhang.
\newblock Noncommutative {B}ohnenblust--{H}ille inequalities.
\newblock {\em Math. Ann.}, pages 1--20, 2023.

\bibitem[Vö16]{vollering2016talagrand}
Florian Völlering.
\newblock Talagrand's inequality for interacting particle systems satisfying a log-{S}obolev inequality.
\newblock {\em Ann. Inst. Henri Poincar\'{e} Probab. Stat.}, 52(1):173--195, 2016.

\bibitem[Wol12]{wolf2012quantum}
Michael~M Wolf.
\newblock Quantum channels \& operations: Guided tour.
\newblock {\em Lecture notes available at http://www-m5. ma. tum. de/foswiki/pub M}, 5, 2012.

\bibitem[WZ21]{WZ20}
Melchior Wirth and Haonan Zhang.
\newblock Complete gradient estimates of quantum {M}arkov semigroups.
\newblock {\em Comm. Math. Phys.}, 387(2):761--791, 2021.

\end{thebibliography}
\end{document}